        \title[Equivariant complex bundles and unitary bordism]
				{Equivariant complex bundles, fixed points and equivariant unitary bordism}
        \author{Andr\'es \'Angel}
      	\address{Departamento de Matem\'aticas\\
          Universidad de los Andes\\
	Carrera 1 N. 18 - 10\\ Bogot\'a, Colombia}
	       \email{ja.angel908@uniandes.edu.co}
	\address{
	 Departamento de Matem\'aticas y Estad\'istica\\
       Universidad del Norte\\ 
       Km. 5 via Puerto Colombia, Barranquilla, Colombia}    
       \email{ajangel@uninorte.edu.co}
         \author{Jos\'e Manuel G\'omez}
          \address{ Escuela de Matem\'aticas\\
          Universidad Nacional de Colombia sede Medell\'in\\
	Calle 59A No 63-20, Bloque 43, Oficina 243\\ Medell\'in, Colombia }      
       \email{jmgomez0@unal.edu.co}
    \urladdr{https://sites.google.com/a/unal.edu.co/jmgomez0/}
      \author{Bernardo Uribe}
      \address{Departamento de Matem\'aticas y Estad\'istica\\
       Universidad del Norte\\ 
       Km. 5 via Puerto Colombia, Barranquilla, Colombia}
      \email{bjongbloed@uninorte.edu.co}
      \urladdr{https://sites.google.com/site/bernardouribejongbloed/}
            \keywords{Equivariant K-theory, twisted K-theory, twisted equivariant K-theory}       
       \subjclass[2010]{19L47, 19L50, 55N22, 57R85, 57R77, }
\thanks{The authors acknowledge and thank the financial support 
provided by the Max Planck Institute for Mathematics and the Office of External Activities of the ICTP through Network NT8. The first author acknowledges 
and thanks the financial support of the  grant \textit{P12.160422.004/01- FAPA Andres Angel} 
from Vicedecanatura de Investigaciones de la Facultad de Ciencias de la Universidad de
los Andes, Colombia. The last two authors
acknowledge and thank the support of  COLCIENCIAS through grant numbers FP44842-617-2014  
and FP44842-013-2018 of the Fondo Nacional de Financiamiento para la Ciencia, la Tecnolog\'ia y la Innovaci\'on.
The third author acknowledges and thanks the financial support provided by the 
Alexander Von Humboldt Foundation.}
\date{\today}
\DeclareMathAlphabet{\matheurm}{U}{eur}{m}{n}
\DeclareMathOperator{\Ad}{Ad}
\DeclareMathOperator{\Inn}{Inn}
\DeclareMathOperator{\map}{map}
\DeclareMathOperator{\Hom}{\textup{Hom}}
\DeclareMathOperator{\Irr}{\textup{Irr}}
  \newcommand{\IC}{\mathbb{C}}
  \newcommand{\IN}{\mathbb{N}}
  \newcommand{\IR}{\mathbb{R}}
  \newcommand{\IS}{\mathbb{S}}
  \newcommand{\IV}{\mathbb{V}}
  \newcommand{\IZ}{\mathbb{Z}}
  \newcommand{\cala}{\mathcal{A}}
  \newcommand{\calf}{\mathcal{F}}
  \newcommand{\calp}{\mathcal{P}}
\newcommand{\colim@}[2]{%
  \vtop{\m@th\ialign{##\cr
    \hfil$#1\operator@font colim$\hfil\cr
    \noalign{\nointerlineskip\kern1.5\ex@}#2\cr
    \noalign{\nointerlineskip\kern-\ex@}\cr}}%
}
\newcommand{\colim}{  \mathop{\mathpalette\colim@{\rightarrowfill@\textstyle}}\nmlimits@}
\newcounter{commentcounter}
\theoremstyle{plain}
\newtheorem{theorem}{Theorem}[section]
\newtheorem{lemma}[theorem]{Lemma}
\newtheorem{corollary}[theorem]{Corollary}
\newtheorem{proposition}[theorem]{Proposition}
\newtheorem*{theorem*}{Theorem}
\newtheorem*{mtheorem*}{Main Theorem}
\theoremstyle{definition}
\newtheorem{definition}[theorem]{Definition}
\newtheorem{example}[theorem]{Example}
\newtheorem{remark}[theorem]{Remark}
\theoremstyle{remark}
\newtheorem*{summary*}{Summary}
\providecommand{\customgenericname}{}
\newcommand{\newcustomtheorem}[2]{%
  \newenvironment{#1}[1]
  {%
   \renewcommand\customgenericname{#2}%
   \renewcommand\theinnercustomgeneric{##1}%
   \innercustomgeneric
  }
  {\endinnercustomgeneric}
}
\let\c@equation=\c@theorem\makeatother
\newcommand{\version}[1] 
{\begin{center} Last edited on #1\\
    Last compiled on \today\\
file name: \jobname
  \end{center}
}
\begin{document}

\begin{abstract}  
We study the fixed points of  the universal $G$-equivariant
 complex vector bundle of rank $n$ and obtain a decomposition
formula in terms of twisted equivariant universal complex vector bundles of smaller rank. 
We use this decomposition to describe the fixed points of the complex equivariant 
K-theory spectrum and the equivariant unitary bordism groups for adjacent families of 
subgroups.

 \end{abstract}

\maketitle

\section*{Introduction}

In this article decomposition formulas for equivariant $K$-theory and 
geometric equivariant bordism of stably almost complex manifolds
are obtained under suitable hypotheses.
The underlying main technical idea behind such decompositions is a splitting 
formula for equivariant complex vector bundles first obtained in \cite{GomezUribe} 
for the particular case of finite groups. In this article we generalize this splitting 
formula for the general case of compact Lie groups and apply it to obtain the 
decompositions of equivariant $K$-theory and equivariant unitary bordism mentioned above. 

More precisely, suppose that $G$ is a compact Lie group that fits in a short exact sequence 
of compact Lie groups $1 \to A\stackrel{\iota}\rightarrow G\stackrel{\pi}\rightarrow Q \to  1$.
Let $X$ be a compact  $G$-space such that $A$ acts trivially on $X$. 
In the first part of this article we study $G$-equivariant complex vector bundles $p:E\to X$.  
Since $A$ acts trivially on $X$ the fibers of $E$ can be seen as $A$-representations. 
By decomposing $E$ into $A$-isotypical pieces we obtain a splitting of $E$ as an 
$A$-equivariant vector bundle in the form 
$\bigoplus_{[\tau]\in \Irr(A)}\IV_{\tau}\otimes \Hom_{A}(\IV_{\tau},E)\cong E$. 
Here $\IV_{\tau}$ denotes the trivial $A$-vector bundle $\pi_{1}:X\times V_{\tau}\to X$ 
associated to an irreducible representation $\tau:A\to U(V_{\tau})$ and $\Irr(A)$ denotes the 
set of isomorphism classes of complex irreducible $A$-representations. This 
splitting is one of $A$-vector bundles and not one of
$G$-vector bundles since in general the bundles
$\IV_{\tau}\otimes \Hom_{A}(\IV_{\tau},E)$ do not possess the structure of a $G$-vector bundle,
(see example \ref{exampletwopoints}). A key technical observation of this work is that, 
up to isomorphism, the direct sum 
$\bigoplus_{[\tau]\in \Irr(A)}\IV_{\tau}\otimes \Hom_{A}(\IV_{\tau},E)$ can be rearranged 
using the different orbits of the action of $Q$ on $\Irr(A)$ as to  obtain a decomposition of 
$E$ in terms of $G$-vector bundles. This way a splitting of $E$ as a $G$-equivariant vector 
bundle is obtained in Theorem \ref{theorem decomposition vector bundle}. 
This result plays a key role in this paper. 

Given an irreducible representation $\rho:A\to U(V_{\rho})$ we can obtain in a natural way a 
central extension of the form $1\to \IS^{1}\to \widetilde{Q}_{\rho}\to Q_{\rho}\to 1$, 
where $Q_{\rho}=G_{\rho}/A$ and $G_{\rho}=\{g\in G ~|~g\cdot \rho\cong \rho\}$, (see Sections 
\ref{Preliminaries} and \ref{twsitedKtheory} for definitions). It turns out that each of the 
pieces in the splitting formula given in Theorem \ref{theorem decomposition vector bundle} 
can be used to define a twisted form of an equivariant $K$-theory, and as a consequence the 
following result in this article is obtained:

\begin{customcor}{2.8}
Let $G$ be a compact Lie group, $X$ a $G$ space on which the normal 
subgroup $A$ acts trivially. Then there is a natural isomorphism
$$ K^*_G(X) \cong \bigoplus_{\rho \in G \backslash \Irr(A)}
{}^{\widetilde{Q}_{\rho}} K^*_{Q_\rho}(X)$$
where  $\rho$ runs over representatives of the orbits of the 
$G$-action on the set of isomorphism classes of irreducible $A$-representations 
and $Q_\rho=G_\rho/A$. 
\end{customcor}

In the above formula  
${}^{\widetilde{Q}_{\rho}} K^*_{Q_\rho}(X)$ denotes a twisted form of 
${Q_\rho}$-equivariant $K$-theory. This result generalizes a similar decomposition 
obtained in  \cite{GomezUribe} for the particular case of finite groups. 

On the other hand, the decomposition obtained in 
Theorem \ref{theorem decomposition vector bundle} can be carried out at the level 
of the universal $G$-equivariant complex bundle of rank $n$ denoted by 
$\gamma_GU(n) \to B_GU(n)$. Here $B_GU(n)$ is the classifying space of $G$-equivariant 
rank $n$ complex vector bundles. Applying this decomposition to the restriction of 
$\gamma_GU(n)$ to $B_GU(n)^A$, we obtain a $N_A/A$-equivariant homotopy equivalence 
with a product of classifying spaces parametrized by the orbits of the action of the 
normalizer $N_A$ on the set of non-trivial irreducible representations of $A$. 
This result is also one of the main results of this article and is summarized 
in Theorem \ref{theorem decomposition of B_GU(n)}.

The second part of this article adds to the understanding of the 
geometric equivariant bordism groups of stably almost complex manifolds with boundary, 
whenever the isotropy groups of the interior of the manifold differ by one conjugacy 
class of subgroups to the isotropies of the boundary. The equivariant version of the 
bordism theories was developed by Conner and Floyd in 
their monumental work \cite{ConnerFloyd-book, ConnerFloyd-Odd} and the unitary 
equivariant bordism theory was developed by Stong \cite{Stong-complex} among others. 
A compact $G$-equivariant manifold is unitary if the tangent bundle may be stabilized 
with trivial real bundles thus becoming isomorphic to a $G$-equivariant complex
vector bundle. The bordism group of unitary $G$-equivariant manifolds is denoted 
$\Omega^G_*$ and the product of manifolds makes  $\Omega^G_*$ into a ring and moreover 
a $\Omega_*$-module. The calculation of the  $\Omega_*$-module structure of $\Omega^G_*$ 
has been evasive and very little is known whenever $G$ is not abelian.
Whenever $G$ is abelian it is known that $\Omega^G_*$ is zero in odd degrees and a 
free $\Omega_*$-module in even degrees, 
(see \cite[ \S XXVIII, Thm. 5.3]{May-book}, \cite[Thm. 1]{Ossa}), and the question
remains open whether this is also the case whenever $G$ is not abelian.

The main calculational tool to understand $\Omega^G_*$ is to restrict the attention to unitary
manifolds on which the isotropy groups at each point lie on a prescribed family of subgroups 
of $G$. For a pair of families $(\calf, \calf')$ of subgroups of $G$ with $\calf' \subset \calf$ 
denote by $\Omega_*^G\{\calf, \calf'\}$
the bordism classes of unitary $G$-manifolds with boundary $(M,\partial M)$ such that the 
isotropy groups of the points in
$M$ lie on $\calf$ and the isotropy groups of the points of the boundary $\partial M$ lie 
on $\calf'$. Whenever
the families differ by the set of groups conjugate to a fix group $A$ they are called adjacent. 
Whenever $A$ is normal in $G$
and $(\calf, \calf')$ the pair of families is adjacent differing by $A$, the bordism class of a
manifold $(M,\partial M)$ in $\Omega_*^G\{\calf, \calf'\}$ is equivalent to the
bordism class of the disk bundle of the tubular neighborhood of the fixed point set $M^A$ in $M$. 
Therefore we may keep the information of the normal bundle by a map from $M^A$ to the 
classifying space of $G$-equivariant complex vector bundles over trivial $A$-spaces.  Hence the 
unitary $G$-equivariant bordism groups for adjacent families can be written in terms of 
non-equivariant unitary bordism groups of a product of certain classifying spaces. As a 
consequence of Theorem \ref{theorem decomposition vector bundle} the following decomposition of 
$G$-equivariant bordisms is obtained. This theorem is the last main result in this article 
and is a new result for compact Lie groups that are not abelian.

\begin{customthm} {4.6}
Suppose that $G$ is a compact Lie group and let $A$ be a closed normal subgroup of $G$.
If $(\calf,\calf')$ is an adjacent pair of families of subgroups  of 
$G$ differing by $A$, then
$$\Omega_n^G\{\calf,\calf' \}(X) \cong \bigoplus_{0 \leq 2k \leq n -{\rm{dim}}(G/A) }
\Omega_{n -2k}^{G/A}\{\{1\}\}\left(X^A \times \bigsqcup_{\overline{P} 
\in \overline{\calp}(k,A)}B_{G/A}U(\overline{P})\right),$$
where $\{1\}$ is the family of subgroups of $G/A$ which only contains the trivial group. 
\end{customthm}

In the above theorem $B_{G/A}U(\overline{P})$ denotes a product of classifying spaces 
for different twisted equivariant vector bundles, (see Section  \ref{section bordism}  for the 
precise definition).

In the last section we use the previous theorem to determine the $\Omega_*$-module structure of 
$\Omega^{D_{2p}}_*$, where $D_{2p}$ is the dihedral group of order $2p$ with $p$ an odd prime.
We show that $\Omega^{D_{2p}}_*$ is a free $\Omega_*$-module in even degrees and zero in odd 
degrees. 

This paper is organized as follows: in Section \ref{Preliminaries} we review some preliminaries 
related to central extensions and twisted equivariant $K$-theory. In Section \ref{twsitedKtheory} 
we prove Theorem \ref{theorem decomposition vector bundle} and obtain 
Corollary \ref{corollary decomposition K-theory} as a consequence. In Section 
\ref{section classifying spaces} we calculate the homotopy type of the fixed 
points space $B_GU(n)^A$ for a closed subgroup $A$ of $G$. Section \ref{section bordism} 
is dedicated to study geometric $G$-equivariant bordism and Theorem 
\ref{thm bordism for adjacent families} is proved there. Finally, 
in Section \ref{applications} some applications are considered.

\section{Preliminaries}\label{Preliminaries}
  
\subsection{Central extensions and representations}

Suppose that we have an exact sequence of compact Lie groups
$$
1 \to A\stackrel{\iota}\rightarrow G\stackrel{\pi}\rightarrow Q \to  1
$$
and let $\rho : A \to U(V_{\rho})$ be a complex, finite dimensional, 
irreducible representation of $A$. Since $A$ is normal in $G$, the group $G$ 
acts on the left on the set $\Hom(A,U(V_{\rho}))$ of homomorphisms from $A$ to $U(V_{\rho})$ 
by the equation $$ (g \cdot \chi)(a) := \chi(g^{-1}ag) $$ for $\chi \in \Hom(A,U(V_{\rho}))$ 
and $a \in A$. Also, the unitary group $U(V_{\rho})$ acts on the right on 
$\Hom(A,U(V_{\rho}))$ by conjugation by the equation $$ (\chi \cdot M)(a) := M^{-1}\chi(a)M $$
for $\chi \in \Hom(A,U(V_{\rho}))$ and $ M \in U(V_{\rho})$.
Note further that this left $G$ action on  $\Hom(A,U(V_{\rho}))$ commutes with 
the right $U(V_{\rho})$ action.

In this section we are going to show that if the representation $\rho$ 
is such that $g\cdot \rho\cong \rho$ for every $g\in G$, then we can associate to 
$\rho$ a central extension of $G$ by $\IS^{1}$ and that this central extension can 
be thought as an obstruction for the existence  of an extension $\tilde{\rho}:G\to U(V_{\rho})$ 
of $\rho$. For this notice that the projective unitary group 
$PU(V_{\rho}):=U(V_{\rho})/Z(U(V_{\rho}))=U(V_{\rho})/\IS^{1}$  
can be identified with the inner automorphisms of $U(V_{\rho})$ via 
the map $p(M)=\Ad_M$, where $\Ad_M(N)=MNM^{-1}$ for $M\in U(V_{\rho})$.

\begin{lemma} \label{lemma from G to Inn(U)}
Suppose that for all $g \in G$ the irreducible representation $g \cdot \rho$
is isomorphic to $\rho$. Then there is a unique homomorphism
$f: G \to PU(V_{\rho})$  making the following diagram commutative
$$\xymatrix{ A \ar[d]_\rho \ar[r]^{\iota} & G \ar[d]^f \\
U(V_\rho) \ar[r]^p & PU(V_{\rho}).
 }$$ 
\end{lemma}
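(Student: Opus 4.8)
The plan is to construct $f$ directly on elements and then verify it is a well-defined homomorphism making the square commute, with uniqueness following from surjectivity of $p$ and the requirement that $f\circ\iota = p\circ\rho$.

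First I would fix, for each $g\in G$, the hypothesis that $g\cdot\rho\cong\rho$, meaning there exists $M_g\in U(V_\rho)$ with $\rho(g^{-1}ag) = M_g^{-1}\rho(a)M_g$ for all $a\in A$; equivalently $(g\cdot\rho)(a) = \rho(a)\cdot M_g$ in the right-conjugation notation, so $M_g$ intertwines $\rho$ and $g\cdot\rho$. By Schur's lemma, since $\rho$ is irreducible, such an $M_g$ is unique up to a scalar in $\IS^1$, so the class $\Ad_{M_g}\in PU(V_\rho)$ is \emph{well-defined} independently of the choice. I would then define $f(g) := \Ad_{M_g} = p(M_g)$. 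For $g = \iota(a)\in A$ one checks directly that $M_a = \rho(a)$ works (since $\rho((\iota a)^{-1}a'\iota a) = \rho(a)^{-1}\rho(a')\rho(a)$ by multiplicativity of $\rho$), hence $f(\iota(a)) = \Ad_{\rho(a)} = p(\rho(a))$, which is exactly the commutativity of the square.

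Next I would check $f$ is a homomorphism: given $g,h\in G$, from $\rho(g^{-1}ag) = M_g^{-1}\rho(a)M_g$ and $\rho(h^{-1}bh) = M_h^{-1}\rho(b)M_h$ one computes $\rho((gh)^{-1}a(gh)) = M_h^{-1}M_g^{-1}\rho(a)M_gM_h$, so $M_gM_h$ is a valid choice of $M_{gh}$, whence $f(gh) = \Ad_{M_gM_h} = \Ad_{M_g}\Ad_{M_h} = f(g)f(h)$. Continuity of $f$ is the one point requiring a little care: since $G$ is a compact Lie group one can choose a continuous local section of $p$ (a local splitting of the principal $\IS^1$-bundle $U(V_\rho)\to PU(V_\rho)$) and use the averaging/intertwiner construction locally to produce a continuous local lift $g\mapsto M_g$, showing $f$ is continuous near each point, hence a continuous homomorphism. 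For uniqueness, suppose $f'$ also makes the square commute; then $f$ and $f'$ agree on $\iota(A)$, and for any $g\in G$ both $f(g)$ and $f'(g)$ must, by the intertwining property forced by commutativity with the right $U(V_\rho)$-action and the conjugation action on $\Hom(A,U(V_\rho))$, lie in the same $\Ad_{M_g}$-coset, which is a single point of $PU(V_\rho)$; so $f = f'$.

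The main obstacle I anticipate is the \textbf{continuity of $f$}: defining $f$ elementwise via Schur is purely algebraic, and one must argue that the pointwise-defined map $G\to PU(V_\rho)$ is actually continuous. The cleanest route is to note that $\Ad_{M_g}$ is characterized as the unique element of $PU(V_\rho)$ whose action on $U(V_\rho)$ conjugates $\rho(a)$ to $\rho(g^{-1}ag)$ for all $a$, and that this characterization is "closed" — the graph of $f$ is a closed subset of $G\times PU(V_\rho)$ — so since $PU(V_\rho)$ is compact Hausdorff, $f$ is continuous. Everything else (the homomorphism identity, commutativity of the square, uniqueness) is a routine consequence of Schur's lemma and the multiplicativity of $\rho$.
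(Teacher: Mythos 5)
Your construction of $f$ via Schur's lemma, the homomorphism check via $M_{gh}=M_gM_h$, and the verification of the square for $g=\iota(a)$ using $M_a=\rho(a)$ all mirror the paper's proof: the paper likewise appeals to Schur to declare $f(g^{-1})$ the unique inner automorphism with $g\cdot\rho = f(g^{-1})\circ\rho$ and then checks multiplicativity. Your explicit treatment of continuity is a genuine improvement over the paper, which silently assumes it; the closed-graph argument (closedness of the intertwining locus in $G\times PU(V_\rho)$, together with compactness of the codomain) is correct.

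The one place to be careful is uniqueness, where your argument --- and the paper's terse ``by definition it is unique'' --- is slightly too quick. Commutativity of the square plus the homomorphism property gives only $f(g)^{-1}\,p(\rho(a))\,f(g) = p(\rho(g^{-1}ag))$ for all $a\in A$, and this constrains $f(g)$ merely up to the centralizer of $p(\rho(A))$ in $PU(V_\rho)$, which need not be trivial. It is trivial exactly when no nontrivial character $\mu:A\to\IS^1$ satisfies $\mu\otimes\rho\cong\rho$; otherwise several homomorphisms make the square commute (for instance $A = Q_8$ with its $2$-dimensional irreducible $\rho$ and $G = Q_8\times\IZ/2$ gives a Klein-four group's worth of them, since the centralizer of $p(\rho(Q_8))$ in $PU(2)$ is a Klein four-group). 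What is genuinely unique, by Schur applied at every $g\in G$ and not just on $A$, is the $f$ satisfying the stronger pointwise intertwining property $g\cdot\rho=\Ad_{f(g)}^{-1}\circ\rho$ --- this is the $f$ both you and the paper actually construct, and it is the one used later to build $\widetilde G_\rho$; so the argument is fine, but your phrase ``forced by commutativity'' overclaims what the diagram alone gives.
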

\begin{proof}
Suppose that $g\in G$. Note that the representation $g \cdot \rho : A \to U(V_\rho)$, 
defined by $(g\cdot \rho)(a)=\rho(g^{-1}ag)$ for $a\in A$ and $g\in G$,
also has $V_\rho$ for underlying vector space. By 
Schur's lemma we know that $\Hom_{U(V_\rho)}(g \cdot \rho, \rho) \cong \IC$,
thus there is only one inner automorphism $f(g^{-1})\in \Inn(U(V_\rho))$ 
of $U(V_\rho)$ such that $g \cdot \rho = f(g^{-1}) \circ \rho$.
Whenever $g \in A$, we have that $ g \cdot \rho = \Ad_{\rho(g)^{-1}} \circ \rho$ and
therefore we set $f(g)=\Ad_{\rho(g)}$ whenever $g \in A$.

For $h,g \in G$ we know that $(hg \cdot \rho)=h \cdot ( g \cdot \rho)$ thus implying that
$f((hg)^{-1}) \circ \rho= h \cdot (f(g^{-1}) \circ \rho)=  f(g^{-1})\circ  f(h^{-1}) \circ \rho$
and therefore $f((hg)^{-1}) =  f(g^{-1})\circ  f(h^{-1})$.
Hence $f$ is a homomorphism and by definition it is unique. 
\end{proof}

Suppose now that we have an irreducible representation  $\rho:A \to U(V_{\rho})$ such that 
$g\cdot \rho\cong \rho$ for every $g\in G$. Let $f: G \to PU(V_{\rho})$ be the homomorphism 
constructed in the previous lemma so that the following diagram commutes
\[
\xymatrix{ A \ar[d]_\rho \ar[r]^{\iota} & G \ar[d]^f \\
U(V_\rho) \ar[r]^p & PU(V_{\rho}).
 }
\] 
Recall that the natural projection map 
\[
1\to \IS^{1}\to U(V_{\rho})\stackrel{p}{\rightarrow} PU(V_{\rho})\to 1
\]
defines a central extension of $PU(V_{\rho})$ by $\IS^{1}$. 
Define the Lie group $\widetilde{G}_{\rho} : = f^*U(V_\rho)$ as the pullback of $U(V_\rho)$ 
under the homomorphism $f$ so that we obtain 
a central extension of Lie groups
\[
1\to \IS^{1}\to \widetilde{G}_{\rho} \stackrel{\tau_{\rho}}{\rightarrow} G\to 1. 
\]
If we denote by $\widetilde{f} : \widetilde{G}_{\rho} \to U(V_\rho)$ the
induced homomorphism we obtain the
following commutative diagram in the category of Lie groups
\begin{align} \label{diagram f tilde}
\xymatrix{
& \IS^1  \ar[d]  &  \IS^1 \ar[d] \\ 
A \ar[r]^{\widetilde{\iota}}  \ar[d]^= & \widetilde{G}_{\rho}
\ar[r]^{\widetilde{f}} \ar[d] & U(V_\rho) \ar[d] \\
A \ar[r]^\iota  & G \ar[r]^f & PU(V_\rho).
}
\end{align}
In the above diagram the vertical sequences are 
$\IS^1$-central extensions and the homomorphism 
$\widetilde{\iota} : A \to  \widetilde{G}_{\rho}$ is the unique homomorphism 
such that $ \rho = \widetilde{f} \circ \widetilde{\iota}$.
Since $A$ is normal in $G$ and $\widetilde{G}_{\rho}$ is a central extension of $G$, 
then we have that $\widetilde{\iota}(A)$ is also normal in $\widetilde{G}_{\rho}$. 
Therefore the quotient $\widetilde{G}_{\rho}/\widetilde{\iota}(A)$
is a Lie group and we denote it by $$\widetilde{Q}_{\rho}:= 
\widetilde{G}_{\rho}/\widetilde{\iota}(A)$$ since it depends only on $\rho$, 
and it fits into the diagram
\begin{equation}\label{diagram2}
\xymatrix{
& \IS^1  \ar[d]  &  \IS^1 \ar[d] \\ 
A \ar[r]^{\widetilde{\iota}}  \ar[d]^= & \widetilde{G}_{\rho} 
\ar[r]^{\widetilde{\pi}} \ar[d] & \widetilde{Q}_{\rho}\ar[d] \\
A \ar[r]^\iota  & G \ar[r]^\pi & Q
}
\end{equation}
where the horizontal sequences are exact, the vertical are $\IS^1$-central extensions
and the square on the right hand side is a pullback square.

\begin{proposition}\label{obstruction for representations}
Consider the short exact sequence $1 \to A \to G \to Q \to 1$ of compact Lie 
groups and $\rho:A \to U(V_\rho)$ an irreducible representation of $A$ such that 
its isomorphism class is invariant under the $G$ action, namely that 
$(g\cdot \rho) \cong \rho$ for all $g \in G$.
Then the representation $\rho$ may be extended to an irreducible representation
$\widetilde{\rho} : G \to U(V_\rho)$ if and only if the $\IS^1$-central extension 
$\widetilde{Q}_{\rho}$ is trivial, i.e. $\widetilde{Q}_{\rho}$ is isomorphic to 
$Q \times \IS^1$ as Lie groups. 
\end{proposition}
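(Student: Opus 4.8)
The plan is to reduce both sides of the equivalence to the existence of a homomorphic (Lie group) section of an $\IS^1$-central extension, and then to transport such sections across the pullback square on the right of diagram \eqref{diagram2}. The key preliminary observation is that \emph{any} homomorphism $\widetilde\rho\colon G\to U(V_\rho)$ with $\widetilde\rho\circ\iota=\rho$ must satisfy $p\circ\widetilde\rho=f$: indeed $p\circ\widetilde\rho$ is a homomorphism $G\to PU(V_\rho)$ fitting into the commuting square of Lemma \ref{lemma from G to Inn(U)}, which determines $f$ uniquely. Hence, since $\widetilde G_\rho=f^*U(V_\rho)$, an extension $\widetilde\rho$ of $\rho$ is precisely the same data as a homomorphic section $s\colon G\to\widetilde G_\rho$ of $\tau_\rho$, via $s(g)=(g,\widetilde\rho(g))$ and $\widetilde\rho=\widetilde f\circ s$. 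Note also that such a $\widetilde\rho$ is automatically irreducible, since its restriction $\widetilde\rho|_A=\rho$ already is, so the word ``irreducible'' in the statement imposes no extra condition.

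For the direct implication, assume $\widetilde\rho$ extends $\rho$ and let $s$ be the corresponding section of $\tau_\rho$. In the pullback description $\widetilde G_\rho=f^*U(V_\rho)$ one has $\widetilde\iota(a)=(\iota(a),\rho(a))$, because $\widetilde\iota$ is the unique lift of $\iota$ with $\widetilde f\circ\widetilde\iota=\rho$; therefore $s(\iota(a))=(\iota(a),\widetilde\rho(\iota(a)))=(\iota(a),\rho(a))=\widetilde\iota(a)$, so $s$ carries $\iota(A)$ onto $\widetilde\iota(A)$ and descends to a homomorphism $\bar s\colon Q=G/\iota(A)\to\widetilde G_\rho/\widetilde\iota(A)=\widetilde Q_\rho$, which is smooth because the quotient maps are submersions. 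Since the structure map $\widetilde Q_\rho\to Q$ is the one induced by $\pi\circ\tau_\rho$ through $\widetilde\pi$, composing it with $\bar s$ gives the identity of $Q$, i.e. $\bar s$ is a homomorphic section. Finally, using that $\IS^1$ is central, $(q,z)\mapsto\bar s(q)\cdot z$ is a bijective Lie group homomorphism $Q\times\IS^1\to\widetilde Q_\rho$, hence an isomorphism, so $\widetilde Q_\rho$ is the trivial central extension.

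For the converse, assume $\widetilde Q_\rho\cong Q\times\IS^1$ and choose a homomorphic section $\bar s\colon Q\to\widetilde Q_\rho$ of its structure map. Because the right-hand square of diagram \eqref{diagram2} is a pullback, $g\mapsto(g,\bar s(\pi(g)))$ defines a homomorphic section $s\colon G\to\widetilde G_\rho$ of $\tau_\rho$. Here $\widetilde\iota(A)=\ker\widetilde\pi$ by exactness of the top row of \eqref{diagram2}, so in this pullback picture $\widetilde\iota(a)=(\iota(a),e)$; since $\bar s(e_Q)=e$ this gives $s(\iota(a))=(\iota(a),\bar s(\pi(\iota(a))))=(\iota(a),e)=\widetilde\iota(a)$. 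Consequently $\widetilde\rho:=\widetilde f\circ s\colon G\to U(V_\rho)$ satisfies $\widetilde\rho\circ\iota=\widetilde f\circ s\circ\iota=\widetilde f\circ\widetilde\iota=\rho$, and by the remark above it is an irreducible extension of $\rho$, as desired.

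The only genuinely fussy part is the bookkeeping with the two pullback descriptions of $\widetilde G_\rho$ (as $f^*U(V_\rho)$ and as $G\times_Q\widetilde Q_\rho$): one must keep careful track of which projection is $\tau_\rho$ and which is $\widetilde\pi$, and verify in each direction that the section produced over $G$ restricts on $A$ exactly to $\widetilde\iota$ — this last point is precisely what makes it descend to, respectively arise from, a section over $Q$. The remaining ingredients, namely that a homomorphic section splits a central extension and that a continuous bijective homomorphism of Lie groups is an isomorphism, are standard and require no computation.
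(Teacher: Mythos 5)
Your proof is correct and takes essentially the same route as the paper's, namely translating an extension $\widetilde\rho$ of $\rho$ into a Lie group section of $\tau_\rho\colon\widetilde G_\rho\to G$ and then transporting sections across the pullback square on the right of diagram \eqref{diagram2}. You are in fact somewhat more careful than the published argument: you explicitly verify that the section produced on $G$ restricts on $A$ to $\widetilde\iota$ (equivalently, that $\widetilde\rho\vert_A$ really is $\rho$ and not $\rho$ twisted by a character of $A$), which is the one point the paper leaves implicit, and you note that irreducibility of $\widetilde\rho$ is automatic from irreducibility of $\rho$.
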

\begin{proof}
If $\widetilde{Q}_{\rho}$ is trivial as a $\IS^1$-central extension, then 
$\widetilde{G}_{\rho}$ must also be trivial as a $\IS^1$-central extension, i.e.
$\widetilde{G}_\rho\cong G \times \IS^1$. Therefore there is a homomorphism 
$\sigma:G \to \widetilde{G}_{\rho}$ compatible with the quotient homomorphism 
$\widetilde{G}_{\rho} \to G$ whose composition 
$\widetilde{\rho}:=\widetilde{f} \circ \sigma : G \to U(V_\rho)$ is the desired 
extension of $\rho$.

Conversely, if  $\widetilde{\rho} : G \to U(V_\rho)$ extends the homomorphism $\rho$
then $\widetilde{\rho} $ defines a homomorphism $\sigma: G \to \widetilde{G}_\rho$ 
compatible with the quotient homomorphism $\widetilde{G}_\rho \to G$ thus 
making $\widetilde{G}$ a trivial $\IS^1$-central extension. It follows that 
$\widetilde{Q}$ is also trivial as a $\IS^1$-central extension. 
\end{proof}

\begin{remark}
Recall that isomorphism classes of $\IS^1$-central extensions of $Q$ are in 1-1 
correspondence with elements in $H^3(BQ, \IZ)$ (see 
\cite[Prop. 6.3]{Atiyah-Segal2}). By the previous proposition we may 
say that the obstruction for the existence of the extension 
$\widetilde{\rho} : G \to U(V_\rho)$ of the irreducible representation 
$\rho:A \to U(V_\rho)$ is the cohomology class 
$[\widetilde{Q}_\rho] \in H^3(BQ,\IZ)$ which encodes the information 
of the $\IS^1$-central extension $\IS^1 \to \widetilde{Q}_\rho \to Q$. 
In \cite[\S2]{GomezUribe} the obstruction of the existence of extensions 
of representations $\rho:A\to U(V_{\rho})$ was studied for the 
 case of finite groups and the obstruction was explicitly described in terms of cocycles.
\end{remark}

\subsection{Twisted equivariant K-theory}\label{twistedK}

Next we recall the definition of twisted equivariant $K$-theory that we 
will use throughout this article. For this suppose that $Q$ is a 
compact Lie group and let \begin{equation*}
1\to \IS^{1}\to \widetilde{Q}\stackrel{\tau}{\rightarrow} Q\to 1
\end{equation*}
be a $\IS^1$-central extension of $Q$.
Let $X$ be a $Q$-space and endow it with the action of $\widetilde{Q}$ 
induced by the $Q$-action. Consider 
the set of isomorphism classes of $\widetilde{Q}$-vector bundles $p:E\to X$ on 
which the elements $z\in \IS^{1}$ act by the scalar multiplication of $z^{-1}$, 
denote this set by ${}^{\widetilde{Q}}\text{Vec}_{Q}(X)$. 
The set ${}^{\widetilde{Q}}\text{Vec}_{Q}(X)$ is a semigroup under direct sum of 
vector bundles and we define the twisted equivariant K-group 
${}^{\widetilde{Q}}K_{Q}^{0}(X)$ as the Grothendieck construction applied 
to  ${}^{\widetilde{Q}}\text{Vec}_{Q}(X)$.  For $n>0$ the twisted groups 
${}^{\widetilde{Q}}K_{Q}^{n}(X)$ are defined as 
${}^{\widetilde{Q}}\widetilde{K}_{Q}^{0}(\Sigma^{n}X_{+})$. We call the groups 
${}^{\widetilde{Q}}K_{Q}^{*}(X)$ the  $\widetilde{Q}$-twisted
$Q$-equivariant K-theory groups of $X$. Notice that ${}^{\widetilde{Q}}K_{Q}^{*}(X)$  
is naturally a module over $R(Q)$. The twisted groups ${}^{\widetilde{Q}}K_{Q}^{0}(X)$ 
can alternatively be defined as follows. The action of $\IS^{1}$ on $X$ obtained by 
restricting the $\widetilde{Q}$-action   is trivial. Therefore we obtain a natural map 
$K_{\widetilde{Q}}^{0}(X)\to K_{\IS^{1}}^{0}(X)\cong K^{0}(X)\otimes R(\IS^{1})$. 
Composing this with the restriction 
map $K^{0}(X)\otimes R(\IS^{1})\to R(\IS^{1})$ we obtain a natural map 
$K_{\widetilde{Q}}^{0}(X)\to R(\IS^{1})$ and ${}^{\widetilde{Q}}K_{Q}^{0}(X)$  can also 
be defined as the inverse image of the subgroup generated by the $\IS^{1}$-representations 
on which a scalar $z$ acts by multiplication of $z^{-1}$. 
This description can also be used to define ${}^{\widetilde{Q}}K_{Q}^{n}(X)$. The cohomology 
class that classifies the twist is the image of the class 
$[\widetilde{Q}] \in H^3(BQ,\IZ)$, that corresponds to the central extension 
$\widetilde{Q}$, under the canonical map $H_{Q}^3(*,\IZ)=H^3(BQ,\IZ)\to H_{Q}^3(X,\IZ)$.

\begin{remark}
In the literature it is more common to encounter a different (but equivalent) definition of this 
twisted form of equivariant K-theory, cf. \cite[Def. 7.1]{Adem-Ruan}. Suppose that we are  
given a central extension $1\to \IS^{1}\to \widetilde{Q}\stackrel{\tau}{\rightarrow} Q\to 1$. 
We can also consider the set ${}^{\widetilde{Q}^{+}}\text{Vec}_{Q}(X)$ of isomorphism classes 
of  $\widetilde{Q}$-vector bundles over $p:E\to X$ on which an element 
$z\in \IS^{1}$ acts by the scalar multiplication of $z$. The set 
${}^{\widetilde{Q}^{+}}\text{Vec}_{Q}(X)$ is also a semigroup under direct sum of 
vector bundles and we can also define a twisted form of K-theory, which we will denote  
by ${}^{\widetilde{Q}^{+}}K_{Q}^{0}(X)$, as 
the Grothendieck construction applied to  ${}^{\widetilde{Q}^{+}}\text{Vec}_{Q}(X)$.
For $n>0$ the twisted groups ${}^{\widetilde{Q}^{+}}K_{Q}^{n}(X)$ can be defined 
in a similar way as above. These two twisted forms of equivariant K-theory 
are naturally isomorphic as we show next. By definition it 
suffices to prove the case $n=0$. Let $p:E\to X$ be a 
$\widetilde{Q}$-vector bundle such that a central element 
$z\in \IS^{1}$  acts by the scalar multiplication of $z^{-1}$ 
so that $[E]\in {}^{\widetilde{Q}}\text{Vec}_{Q}(X)$. 
Let $\Hom(E,\underline{\IC})$ be the vector bundle dual to $E$, where  
$\underline{\IC}$ denotes the trivial $\widetilde{Q}$-vector bundle 
$\pi_{1}:X\times \IC \to X$.  If $\phi\in \Hom(E,\underline{\IC})_{x}$ and 
$q\in \widetilde{Q}$ the action of $q$ on $\phi$ is the element 
$q\cdot \phi\in \Hom(E,\underline{\IC})_{qx}$ defined by 
$(q\cdot \phi)(v)=\phi(q^{-1}\cdot v)$ for every $v\in E_{qx}$. 
With this action $\Hom(E,\underline{\IC})$ is  a $\widetilde{Q}$-vector bundle. 
If $z\in \IS^{1}$ is a central element and 
$\phi\in \Hom(E,\underline{\IC})_{x}$, then as the action of $z$ in $E$ is 
given by scalar multiplication of $z^{-1}$, we have 
\[
(z\cdot \phi)(v)=\phi(z^{-1}\cdot v)=\phi(zv)=z\phi(v)
\]
for every $v\in E_{x}$. This shows that $\Hom(E,\underline{\IC})$ is a 
$\widetilde{Q}$-equivariant vector bundle on which the central factor $\IS^{1}$ 
acts by multiplication of scalars on the fibers so that 
$[\Hom(E,\underline{\IC})]\in {}^{\widetilde{Q}^{+}}\text{Vec}_{Q}(X)$. 
The assignment 
\begin{align*}
{}^{\widetilde{Q}}\text{Vec}_{Q}(X)&\stackrel{\cong}{\to} 
{}^{\widetilde{Q}^{+}}\text{Vec}_{Q}(X)\\
[E]&\mapsto [\Hom(E,\underline{\IC}) ]
\end{align*}
is an isomorphism of semigroups. After applying the Grothendieck construction 
we obtain an isomorphism 
${}^{\widetilde{Q}}K_{Q}^{0}(X)\stackrel{\cong}{\to} {}^{\widetilde{Q}^{+}}K_{Q}^{0}(X)$. 
Throughout this article we will work with the twisted form of equivariant 
K-theory constructed using vector bundles on which the elements of the 
central factor $\IS^{1}$ act by multiplication of their inverse; these are the 
bundles that appear naturally in our work. 
\end{remark}

\section{Equivariant K-theory with prescribed fibers}\label{twsitedKtheory}

The goal of this section is to generalized the decomposition of $G$-equivariant 
$K$-theory obtained in \cite[Theorem 3.2]{GomezUribe} to the case of compact Lie groups.

To start assume that $G$ is a compact Lie group and let $A$ be a normal 
subgroup of  $G$  so that we have an extension of compact Lie groups
\[
1\to A\stackrel{\iota}\rightarrow G\stackrel{\pi}\rightarrow Q\to  1,
\]
where $Q=G/A$. Let $G$ act on a compact space $X$ in such a way that 
$A$ acts trivially on $X$. 
Assume that $p:E\to X$ is a $G$-equivariant 
complex vector bundle.  We can give $E$ a Hermitian metric that is invariant 
under the action of $G$, in particular this metric is $A$-invariant. 
If we see $p:E\to X$ as an $A$-vector bundle then 
as the action of $A$ on $X$ is trivial, by \cite[Proposition 2.2]{SegalK} 
we have a natural isomorphism of $A$-vector bundles
\begin{align*}
\beta:\bigoplus_{[\tau]\in \Irr(A)}\IV_{\tau}\otimes \Hom_{A}(\IV_{\tau},E)& 
\stackrel{\cong}{\to} E\\
v\otimes f&\mapsto f(v).
\end{align*}
In the above equation $\Irr(A)$ denotes the set of isomorphism classes of complex 
irreducible $A$-representations, and if  $\tau:A\to U(V_{\tau})$ is an irreducible 
$A$-representation, then $\IV_{\tau}$ denotes the trivial $A$-vector bundle 
$\pi_{1}:X\times V_{\tau}\to X$. The decomposition of the vector bundle $E$ 
provided above is a decomposition as an $A$-equivariant 
bundle and not as a $G$-equivariant bundle. Furthermore, the summands 
$\IV_{\tau}\otimes \Hom_{A}(\IV_{\tau},E)$ that appear in this decomposition 
do not have in general the structure of a $G$-vector bundle in such a way that 
the map $\beta$ is $G$-equivariant. Following \cite{GomezUribe} we have the next 
definition.

\begin{definition}\label{definitiontwisted1}
Suppose that $\rho:A\to U(V_{\rho})$ is a complex irreducible representation
and that $1 \to A \to G \to Q \to 1$ is a short exact sequence of compact Lie groups.
A $(G, \rho)$-equivariant vector bundle over $X$ is a $G$-vector bundle 
$p:E\to X$ such that 
the map 
\begin{align*}
\beta:\IV_{\rho}\otimes \Hom_{A}(\IV_{\rho},E)&\to E\\
v\otimes f&\mapsto f(v)
\end{align*}
is an isomorphism of $A$-vector bundles.  
\end{definition}

A $(G,\rho)$-equivariant vector bundle is a $G$-equivariant 
vector bundle $p:E\to X$ such that for every $x\in X$ 
the $A$-representation $E_{x}$ is isomorphic to a direct sum of copies of the representation 
$\rho$. Notice that if $p:E\to X$ is a $(G,\rho)$-equivariant vector bundle then 
for every $g\in G$ we have $g\cdot \rho\cong \rho$ so that 
$(G,\rho)$-equivariant vector bundles  can only exists when this happens. 
We can define a direct summand of the equivariant K-theory using  
$(G, \rho)$-equivariant vector bundles. For this let 
$\text{Vec}_{G,\rho}(X)$ denote the set of isomorphism classes of 
$(G,\rho)$-equivariant vector bundles, where two 
$(G,\rho)$-equivariant vector bundles are isomorphic if they are isomorphic
as $G$-vector bundles. Notice that if $E_{1}$ and $E_{2}$ are two  
$(G,\rho)$-equivariant vector bundles then so is the direct sum $E_{1}\oplus E_{2}$. 
Therefore $\text{Vec}_{G, \rho}(X)$  is a semigroup. 

\begin{definition} 
Assume that $G$ acts on a compact space $X$ in such a way 
that $A$ acts trivially on $X$. 
We define $K_{G, \rho}^{0}(X)$, the $(G,\rho)$-equivariant K-theory
of $X$, as the Grothendieck construction applied to $\text{Vec}_{G,\rho}(X)$. For 
$n>0$ the group $K_{G,\rho}^{n}(X)$ is defined as 
$\widetilde{K}_{G, \rho}^{0}(\Sigma^{n}X_{+})$, where as usual 
$X_{+}$ denotes the space $X$ with an added base point.
\end{definition}
 
Following \cite{GomezUribe} we relate the $(G,\rho)$-equivariant K-theory
of $X$ with  a suitable twisted form of equivariant K-theory as defined in 
\S \ref{twistedK}. For this suppose that 
$\rho:A\to U(V_{\rho})$ is an irreducible representation
and let $f:G\to PU(V_{\rho})$ be the homomorphism associated to $\rho$  as constructed 
in Lemma \ref{lemma from G to Inn(U)}. Consider $\widetilde{G}_{\rho}=f^{*}U(V_{\rho})$ 
and $\widetilde{Q}_\rho:= \widetilde{G}_\rho/\widetilde{\iota}(A)$ so that we have 
a commutative diagram of central extensions as in diagram (\ref{diagram2}).

\begin{theorem}\label{centralaction}
Let $X$ be a $G$-space such that $A$ acts trivially on $X$.  Assume that 
$g\cdot \rho\cong \rho$ for every $g\in G$. If $p:E\to X$ is a 
$(G,\rho)$-equivariant vector bundle, then $\Hom_{A}(\IV_{\rho},E)$  has the 
structure of a $\widetilde{Q}_{\rho}$-vector bundle on which the elements 
of the central factor $\IS^{1}$ act by multiplication by their inverse.  
Moreover,  the assignment $$[E]\mapsto [\Hom_{A}(\IV_{\rho},E)]$$ defines 
a natural one to one correspondence between isomorphism classes of 
$(G, \rho)$-equivariant vector bundles over $X$ and isomorphism classes 
of $\widetilde{Q}_{\rho}$-equivariant 
vector bundles over $X$ for which the elements of the central $\IS^{1}$ 
act by multiplication of their inverse. 
\end{theorem}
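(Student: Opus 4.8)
The plan is to make the $\widetilde{Q}_\rho$-bundle structure on $\Hom_A(\IV_\rho,E)$ explicit and then to produce a two-sided inverse to the assignment $[E]\mapsto[\Hom_A(\IV_\rho,E)]$; this runs parallel to the finite-group argument of \cite[Theorem 3.2]{GomezUribe}, the only new input being that all constructions remain well behaved for compact Lie groups.

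I would first unwind the definitions: $\widetilde{G}_\rho=f^{*}U(V_\rho)=\{(g,M)\in G\times U(V_\rho)\mid f(g)=\Ad_M\}$, with $\tau_\rho(g,M)=g$, $\widetilde{f}(g,M)=M$, $\widetilde{\iota}(a)=(a,\rho(a))$, and central factor $\IS^1=\{(1,z\,\id)\mid z\in\IS^1\}$. Given a $(G,\rho)$-equivariant bundle $p:E\to X$ and $\phi\in\Hom_A(V_\rho,E_x)$ set
$$((g,M)\cdot\phi)(v):=g\cdot\phi(M^{-1}v),\qquad v\in V_\rho.$$
The crux is that $(g,M)\cdot\phi$ is again $A$-equivariant: since $\Ad_{M^{-1}}=f(g)^{-1}=f(g^{-1})$ and $f(g^{-1})\circ\rho=g\cdot\rho$ by Lemma \ref{lemma from G to Inn(U)}, one has $M^{-1}\rho(a)=\rho(g^{-1}ag)M^{-1}$, and feeding this through the $A$-equivariance of $\phi$ yields $((g,M)\cdot\phi)(\rho(a)v)=a\cdot((g,M)\cdot\phi)(v)$. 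One then checks routinely that this is a left $\widetilde{G}_\rho$-action covering the $G$-action on $X$, that a central $z\in\IS^1$ sends $\phi$ to $z^{-1}\phi$, and that $\widetilde{\iota}(A)$ acts trivially, because $((a,\rho(a))\cdot\phi)(v)=a\cdot\phi(\rho(a)^{-1}v)=a\cdot(a^{-1}\cdot\phi(v))=\phi(v)$; hence the action factors through $\widetilde{Q}_\rho=\widetilde{G}_\rho/\widetilde{\iota}(A)$, with the central $\IS^1$ acting by multiplication by the inverse. Local triviality of $\Hom_A(\IV_\rho,E)$ follows by realizing it as the image of the fibrewise $A$-averaging idempotent on $\IV_\rho^{*}\otimes E$, a continuous bundle endomorphism because $A$ acts trivially on $X$.

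For the inverse, given a $\widetilde{Q}_\rho$-vector bundle $W\to X$ on which $\IS^1$ acts by $z\mapsto z^{-1}$, I would set $E:=\IV_\rho\otimes W$ with $\widetilde{G}_\rho$ acting diagonally --- on $\IV_\rho=X\times V_\rho$ by $(g,M)\cdot(x,v)=(gx,Mv)$ and on $W$ through $\widetilde{\pi}:\widetilde{G}_\rho\to\widetilde{Q}_\rho$. The central $\IS^1$ acts on $\IV_\rho$ by $z$ and on $W$ by $z^{-1}$, hence trivially on $\IV_\rho\otimes W$, so the action descends to $G=\widetilde{G}_\rho/\IS^1$ and $E$ becomes a $G$-vector bundle over $X$; because $\widetilde{\iota}(A)$ has trivial image in $\widetilde{Q}_\rho$, the subgroup $A$ acts on each fibre $V_\rho\otimes W_x$ as $\rho$ tensored with a trivial representation, so $E$ is $(G,\rho)$-equivariant. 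Finally I would verify the two assignments are mutually inverse: the natural map $W_x\to\Hom_A(V_\rho,V_\rho\otimes W_x)$, $w\mapsto(v\mapsto v\otimes w)$, is an isomorphism by Schur's lemma, and a short computation shows it is $\widetilde{Q}_\rho$-equivariant; conversely the map $\beta$ of Definition \ref{definitiontwisted1} is an isomorphism of $A$-bundles by hypothesis, and the same bookkeeping shows it is in fact $G$-equivariant, hence an isomorphism of $(G,\rho)$-equivariant bundles. Naturality is immediate since every construction commutes with pullback along $G$-maps.

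I expect the only real difficulty to be the central-extension bookkeeping: one must use precisely the $M$ with $\Ad_M=f(g)$ so that $(g,M)\cdot\phi$ stays $A$-equivariant, one must see the two appearances of the central $\IS^1$ (by scalars on $V_\rho$, and by the chosen sign convention on $W$) cancel exactly, and one must check that $\widetilde{\iota}(A)$ acts trivially wherever a descent to $\widetilde{Q}_\rho$ or to $G$ is claimed. None of these is conceptually hard, but keeping the conventions consistent is where the care is needed; the step from finite groups to compact Lie groups contributes only the routine local-triviality check above.
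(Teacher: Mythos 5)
Your proof is correct and follows essentially the same route as the paper's: the same formula $(\widetilde g\bullet\phi)(v)=g\,\phi(\widetilde f(\widetilde g)^{-1}v)$ for the $\widetilde G_\rho$-action on $\Hom_A(\IV_\rho,E)$, the same verifications that $\widetilde\iota(A)$ acts trivially and the central $\IS^1$ acts by $z^{-1}$, and the same inverse construction $F\mapsto\IV_\rho\otimes F$ with the mutually-inverse checks via the evaluation map and $e\mapsto(v\mapsto v\otimes e)$. You supply two details the paper leaves implicit (that $(g,M)\cdot\phi$ remains $A$-equivariant via $M^{-1}\rho(a)=\rho(g^{-1}ag)M^{-1}$, and local triviality via the $A$-averaging idempotent), but the argument is otherwise identical in structure.
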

\begin{proof}
Suppose that  $p:E\to X$ is a $(G,\rho)$-equivariant vector bundle. Then 
$\Hom_{A}(\IV_{\rho},E)$ is a complex vector bundle over $X$. 
Next we give  $\Hom_{A}(\IV_{\rho},E)$ an action of 
$\widetilde{G}_{\rho}$ on  which $\widetilde{\iota}(A)$ acts trivially.

Take
$\phi \in \Hom_{A}(\IV_{\rho},E)_{x}$ and $\widetilde{g}\in \widetilde{G}_\rho$, 
and define $\widetilde{g}\bullet \phi \in \Hom_{A}(\IV_{\rho},E)_{g\cdot x}$ 
by 
\[
(\widetilde{g}\bullet \phi )(v)=g \phi (\widetilde{f}(\widetilde{g})^{-1}v),
\] 
where $\widetilde{g}$ projects to $g$ in $G$ and 
$\widetilde{f} : \widetilde{G}_\rho \to U(V_\rho)$ is the 
homomorphism defined in diagram \eqref{diagram f tilde}.
The action is a composition of continuous maps and therefore it is continuous. 
It is straightforward to check that it is a homomorphism.

Now let us take $a \in A$ and consider the action of 
$\widetilde{\iota}(a)$ on $\phi$. In this case we have
$$(\widetilde{\iota}(a) \bullet \phi)(v) = a  
\phi(\widetilde{f}(\widetilde{\iota}(a))^{-1}v)= a\phi(\rho(a)^{-1}v)= \phi(v)$$
which implies that $\widetilde{\iota}(a) \bullet \phi = \phi$. Hence the action
of $\widetilde{\iota}(A)$ on $ \Hom_{A}(\IV_{\rho},E)$ is trivial and therefore
there is an induced action of $\widetilde{Q}_\rho = \widetilde{G}_\rho/\widetilde{\iota}(A)$
on $\Hom_{A}(\IV_{\rho},E)$ compatible with the action of $Q$ on $X$.

Now, if $\lambda \in \ker( \widetilde{G}_\rho \to G)$ the action becomes
$$(\lambda \bullet \phi)(v) = \phi(\widetilde{f}(\lambda)^{-1}v)
=  \lambda^{-1}\phi(v)$$
which implies that $\Hom_{A}(\IV_{\rho},E)$ is a $\widetilde{Q}_\rho$-equivariant
bundle where the elements of $\IS^1$ act by multiplication by their inverse.

Now let us take a $\widetilde{Q}_\rho$-equivariant
bundle $F \to X$ where the elements of $\IS^1$ act by multiplication by their inverse. 
Consider the vector bundle $\IV_{\rho}\otimes F$ and define a $\widetilde{G}_\rho$ action
in the following way: for $\widetilde{g} \in \widetilde{G}_\rho$ and $v \otimes e \in 
\IV_{\rho}\otimes F$ let the action be
$$\widetilde{g} \cdot (v \otimes e) : =\left( \widetilde{f}(\widetilde{g})v \right) \otimes   
\left(\widetilde{\pi}(\widetilde{g}) \cdot e \right)$$
where $\widetilde{\pi} : \widetilde{G}_\rho \to \widetilde{Q}_\rho$ is the homomorphism
induced by $\pi : G \to Q$. This is clearly a continuous $\widetilde{G}_\rho$ action
and for $\lambda \in \ker( \widetilde{G}_\rho \to G)$ we have that
$$\lambda \cdot (v \otimes e) : =\left( \widetilde{f}(\lambda)v \right) \otimes   
\left(\widetilde{\pi}(\lambda) \cdot e \right) = \lambda v \otimes \lambda \cdot e =
 \lambda v \otimes \lambda^{-1}e= v \otimes e$$
which implies that the action factors through $G=\widetilde{G}_\rho / \IS^1$.

Let us see now how is the action once restricted to $A$. Take $a \in A$ and 
consider the element $\widetilde{\iota}(a) \in \widetilde{G}_\rho$. The 
action of $a$ on $v \otimes e$ becomes 
$$\widetilde{\iota}(a) \cdot (v \otimes e)
=\left( \widetilde{f}(\widetilde{\iota}(a))v \right) \otimes   
\left(\widetilde{\pi}(\widetilde{\iota}(a)) \cdot e \right) 
= \rho(a)v \otimes e$$ which implies that the action of $A$ on the 
fibers of $\IV_{\rho}\otimes F$ is determined by the representation $\rho$. 
Hence $\IV_{\rho}\otimes F$ is a $(G,\rho)$-equivariant vector bundle.

If $E$ is a $(G, \rho)$-equivariant vector bundle then 
$\IV_{\rho}\otimes \Hom_{A}(\IV_{\rho},E)$ is also a 
$(G, \rho)$-equivariant vector bundle and the canonical map
\begin{align*}
\IV_{\rho}\otimes \Hom_{A}(\IV_{\rho},E) &\to E \\
v \otimes \phi & \mapsto \phi(v)
\end{align*}
is by definition an isomorphism of vector bundles. Note that the map 
is moreover $G$-equivariant; for $\widetilde{g} \in \widetilde{G}_\rho$ 
which projects to $g \in G$ we have that
$$g \cdot( v \otimes \phi) = \widetilde{g} \cdot (v \otimes \phi)
= \left( \widetilde{f}(\widetilde{g})v \right) \otimes   
\left(\widetilde{g} \bullet \phi \right),$$ whose evaluation becomes
$$(\widetilde{g} \bullet \phi )\left( \widetilde{f}(\widetilde{g})v \right) =
 g \phi \left( \widetilde{f}(\widetilde{g})^{-1} 
\widetilde{f}(\widetilde{g})v \right) = g \phi(v),$$
thus implying that the canonical evaluation map is a $G$-equivariant isomorphism.

Finally, if $F$  is a $\widetilde{Q}_\rho$-equivariant
bundle where the elements of $\IS^1$ act by multiplication by their inverse, 
we may consider the canonical isomorphism of vector bundles
\begin{align*}
 F &\to \Hom_{A}(\IV_{\rho},\IV_{\rho} \otimes F)\\
 e & \mapsto \phi_e : v  \mapsto v \otimes e.
\end{align*}
It is straightforward to check that it is moreover an isomorphism of 
$\widetilde{Q}_\rho$-equivariant vector bundles.

We conclude that the inverse map of the 
assignment $[E] \mapsto [\Hom_{A}(\IV_{\rho},E)]$
is precisely the map defined by the assignment $[F] \mapsto [\IV_{\rho}\otimes F]$.
The theorem follows.
\end{proof}

Theorem \ref{centralaction} provides the following identification of the 
$(G, \rho)$-equivariant K-groups of Definition \ref{definitiontwisted1} 
with the twisted groups ${}^{\widetilde{Q_{\rho}}}K_{Q}^{*}(X)$ defined 
in Subsection \ref{twistedK}. 
 
\begin{corollary}\label{equivalencetwistedK}
Let $G$ be a compact Lie group and $X$ be a compact $G$-space such that 
the normal subgroup $A$ of $G$ acts trivially on $X$.
Assume furthermore that $\rho : A \to U(V_\rho)$ is an irreducible representation
whose isomorphism class is fixed by $G$, i.e.
$g\cdot \rho\cong \rho$ for every $g\in G$. Then the homomorphism
\begin{align*}
K_{G, \rho}^{*}(X)&\stackrel{\cong}{\to}  {}^{\widetilde{Q}_{\rho}}K_{Q}^{*}(X) \\
[E]&\mapsto [\Hom_{A}(\IV_{\rho},E)] 
\end{align*}
is a natural isomorphism of $R(Q)$-modules. The inverse map is $F \mapsto \IV_{\rho} \otimes F$.
\end{corollary}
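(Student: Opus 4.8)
The plan is to deduce this corollary directly from Theorem~\ref{centralaction} together with the standard formal properties of the Grothendieck construction and of the suspension operation. First I would observe that Theorem~\ref{centralaction} already supplies a bijection of semigroups
$$
\mathrm{Vec}_{G,\rho}(X) \stackrel{\cong}{\longrightarrow} {}^{\widetilde{Q}_{\rho}}\mathrm{Vec}_{Q}(X), \qquad [E]\mapsto [\Hom_{A}(\IV_{\rho},E)],
$$
which is additive because $\Hom_{A}(\IV_{\rho},E_{1}\oplus E_{2})\cong \Hom_{A}(\IV_{\rho},E_{1})\oplus\Hom_{A}(\IV_{\rho},E_{2})$ naturally, and whose inverse $[F]\mapsto[\IV_{\rho}\otimes F]$ is likewise additive since tensoring with the fixed bundle $\IV_{\rho}$ commutes with direct sums. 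Applying the Grothendieck construction, which is a functor from abelian semigroups to abelian groups, turns this into the isomorphism $K_{G,\rho}^{0}(X)\cong {}^{\widetilde{Q}_{\rho}}K_{Q}^{0}(X)$ in degree $0$.

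Next I would handle the higher degrees. By definition $K_{G,\rho}^{n}(X)=\widetilde{K}_{G,\rho}^{0}(\Sigma^{n}X_{+})$ and ${}^{\widetilde{Q}_{\rho}}K_{Q}^{n}(X)={}^{\widetilde{Q}_{\rho}}\widetilde{K}_{Q}^{0}(\Sigma^{n}X_{+})$, so it suffices to check that the degree-$0$ isomorphism is natural in $X$ with respect to $G$-maps (it clearly is, since pulling back a $(G,\rho)$-bundle along $Y\to X$ commutes with the functor $\Hom_{A}(\IV_{\rho},-)$) and that it respects the based structure, i.e.\ it carries reduced groups to reduced groups. The reduced group is the kernel of restriction to the basepoint; since $\Hom_{A}(\IV_{\rho},-)$ is compatible with this restriction, the isomorphism descends to the reduced groups, and applying it to $\Sigma^{n}X_{+}$ (with its induced $G$-action, on which $A$ still acts trivially) gives the isomorphism $K_{G,\rho}^{n}(X)\cong {}^{\widetilde{Q}_{\rho}}K_{Q}^{n}(X)$ for all $n$.

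Finally I would verify that the isomorphism is one of $R(Q)$-modules. Both sides carry their $R(Q)$-module structure through the $G$- (resp.\ $Q$-) action: an element of $R(Q)$ is represented by a $Q$-representation $W$, pulled back to a $G$- (resp.\ $\widetilde{Q}_{\rho}$-) representation, and acts by $[E]\mapsto[E\otimes W]$. The point is the natural isomorphism $\Hom_{A}(\IV_{\rho},E\otimes W)\cong \Hom_{A}(\IV_{\rho},E)\otimes W$, valid because $A$ acts trivially on $W$ (it is pulled back from $Q$), so $W$ passes through the $\Hom_{A}$ functor unchanged and the $\widetilde{Q}_{\rho}$-action on the result is the diagonal one matching the module structure on the target. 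This establishes $R(Q)$-linearity, and together with the two previous paragraphs completes the proof.

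The main obstacle I anticipate is purely bookkeeping rather than conceptual: one must be careful that the various identifications ($\Hom_{A}(\IV_{\rho},-)$ commuting with direct sums, with pullbacks, with restriction to basepoints, and with tensoring by bundles pulled back from $Q$) are not merely abstract isomorphisms but are natural in a way compatible with the equivariant structures constructed in the proof of Theorem~\ref{centralaction}. None of these steps is hard, but writing them all out carefully is where the work lies; since Theorem~\ref{centralaction} has already done the genuinely substantive work of producing the equivariant structure and its inverse, the corollary is essentially a formal consequence.
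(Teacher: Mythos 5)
Your proposal is correct and takes essentially the same route the paper does: the paper treats the corollary as an immediate consequence of Theorem~\ref{centralaction}, and your write-up simply makes explicit the (standard) steps left implicit there, namely that the bijection of Theorem~\ref{centralaction} is a semigroup isomorphism, that the Grothendieck construction and suspension/reduction formalism carry it to all degrees, and that the $R(Q)$-module structures match via $\Hom_A(\IV_\rho, E\otimes W)\cong \Hom_A(\IV_\rho,E)\otimes W$ for $W$ pulled back from $Q$. The only point worth being a little more careful about, which you flag but do not fully discharge, is that the $\widetilde{Q}_\rho$-equivariant structure placed on $\Hom_A(\IV_\rho,E)\otimes W$ by the latter isomorphism really is the diagonal one; a one-line check with the explicit action $(\widetilde{g}\bullet\phi)(v)=g\,\phi(\widetilde{f}(\widetilde{g})^{-1}v)$ from the proof of Theorem~\ref{centralaction} confirms this.
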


Whenever the isomorphism class of the $A$-representation $\rho$ is not fixed 
by the whole group $G$ we need to be more careful. Define 
$$G_{\rho}:= \{ g \in G | g \cdot \rho \cong \rho \} \ \ \mbox{and} \ \ Q_{\rho}:= G_\rho / A$$
and call $\widetilde{G}_\rho$ and $\widetilde{Q}_\rho$ the $\IS^1$-central extensions
which measure the obstruction for the extension of $\rho$ to $G_\rho$ 
constructed in Proposition \ref{obstruction for representations}.

Now consider a $G$-vector bundle $E$ over the compact $G$-space $X$ on which $A$ acts trivially.
We know that as a $A$-vector bundle we have the isomorphism
\begin{equation}\label{A-decomposition}
\bigoplus_{\rho \in \Irr(A)}\IV_{\rho} \otimes\Hom_{A}(\IV_{\rho},E) \stackrel{\cong}{\to} E
\end{equation}
given by the evaluation, where $\rho$ runs over the set of 
isomorphism classes of irreducible $A$-representations.
Each of the vector bundles $\IV_{\rho} \otimes\Hom_{A}(\IV_{\rho},E)$ is a $G_\rho$-vector
bundle. However, it is not possible in general to provide each factor 
$\IV_{\rho} \otimes\Hom_{A}(\IV_{\rho},E)$ with the 
structure of a $G$-vector bundle in such a way that the isomorphism 
(\ref{A-decomposition}) is an isomorphism of $G$-equivariant vector bundles as the $G$-action 
intertwines the vector bundles associated to irreducible $A$-representations 
which are related by the action of $G$. To illustrate this issue we 
explore the following example. 

\begin{example}\label{exampletwopoints}
Suppose that $G=D_8$ is the dihedral group generated by the elements $a,b$ 
with relations $a^4=b^2=1$ and $bab=a^{-1}$ 
and let $A=\langle a \rangle=\IZ/4$ so that we have a short exact sequence 
\[
1 \to A\stackrel{\iota}\rightarrow G\stackrel{\pi}\rightarrow Q \to  1
\]
with $Q=G/A=\{1,\tau\}= \IZ/2$, where $\tau=[b]\in Q$.  Let $\rho:A\to \IS^{1}$ 
be the representation given by $\rho(a)=e^{\frac{2\pi i}{4}}=i$. In this example 
we have $\Irr(\IZ/4)= \{\mathbf{1}, \rho , \rho^2, \rho^3\}$ 
and the action of $Q$ on $\Irr(\IZ/4)$ is such that $\tau$ permutes the 
isomorphism classes of the representations $\rho$ and $\rho^{3}$. Denote 
by $V_{\rho}:=\IC$ equipped with the $A$-representation $\rho$ and 
$V_{\rho^{3}}:=\IC$ equipped with the $A$-representation $\rho^{3}$. 
Consider the balanced product $E:=D_8\times_{A} V_{\rho}$  seen as a 
$D_8$-equivariant vector bundle over $D_{8}/A=\{*,*'\}.$ Note that 
as a $D_8$-equivariant vector bundle $E$ is isomorphic to the bundle $V_{\rho} \sqcup V_{\rho^{3}}$
where the action of $b$ maps $V_{\rho}$ to $V_{\rho^{3}}$ and vice versa using the explicit
isomorphisms with $\IC$. Here we see $ V_{\rho}$ as a bundle over $\{*\}$ and $V_{\rho^{3}}$ 
as a bundle over $\{ *'\}$. In this case we have 
\begin{align*}
\IV_{\rho} \otimes\Hom_{A}(\IV_{\rho},E)&\cong  
V_{\rho}\sqcup \{*'\},\\
\IV_{\rho^{3}} \otimes\Hom_{A}(\IV_{\rho^{3}},E)&\cong  
\{*\} \sqcup V_{\rho^{3}}.
\end{align*}
Hence 
$$\IV_{\rho} \otimes\Hom_{A}(\IV_{\rho},E) \oplus \IV_{\rho^{3}} \otimes\Hom_{A}(\IV_{\rho^{3}},E) 
\cong V_{\rho}\sqcup \{*'\} \oplus \{*\} \sqcup V_{\rho^{3}} \cong E$$ 
as $D_{8}$-equivariant vector bundles. However, the factors $
\IV_{\rho} \otimes\Hom_{A}(\IV_{\rho},E)$ 
and $\IV_{\rho^{3}} \otimes\Hom_{A}(\IV_{\rho^{3}},E)$ 
do not possess a structure of a $D_{8}$-vector bundle that is compatible with the above isomorphism.
\end{example}

Suppose now that $p:E\to X$ is a $G$-vector bundle over the compact 
$G$-space $X$ on which $A$ acts trivially. As our next step we show that the 
factors in the decomposition described in formula (\ref{A-decomposition}) can be arranged in a 
suitable way as to obtain a decomposition of $E$ as a $G$-vector bundle. 
Choosing representatives $\{g_i\}_i$ for each class in $G/G_\rho$,  
we know that the image of the evaluation map
$$\bigoplus_{i}\IV_{g_i \cdot \rho} \otimes\Hom_{A}(\IV_{g_i \cdot \rho},E) {\to} E$$
becomes a $G$-equivariant vector bundle. Notice that $G/G_\rho$  
is finite since $G_\rho$ contains the connected component
of the identity of $G$ and $G$ is compact; this follows 
from the fact that $G$ is acting on the discrete
set $\Irr(A)$. Now,  in order to define the bundle above on 
a coordinate free fashion, we need to 
promote the $G_\rho$-equivariant bundle 
$\IV_{\rho} \otimes\Hom_{A}(\IV_{\rho},E)$ over $X$ to a 
$G$-equivariant bundle over the same space $X$. This construction was 
denoted {\it multiplicative induction} in \cite[\S 4]{Bix-tomDieck} and here we will 
recall its properties.

Let $G$ be a compact Lie group and $H$ a closed subgroup. The 
restriction functor $r_H^G$ from $G$-spaces to $H$-spaces which 
restricts the action to $H$ has a left adjoint which maps
a $H$-space $Y$ to the $G$-space $G \times_H Y$ thus having a homeomorphism
$$\map(G\times_H Y,X)^G \cong \map(Y,r_H^GX)^H  $$
for any $G$-space $X$.  This left adjoint is additive, but in general 
it is not multiplicative.
A right adjoint for the restriction functor $r_H^G$ can be defined on 
an $H$-space $Y$ as the $G$-space
of $H$-equivariant maps from $G$ to $Y$
$$m_H^G(Y) := \map(G,Y)^H$$
where $G$ is considered as an $H$-space via left multiplication. The $G$-action on $m_H^G(Y)$ 
is given by $(g \cdot f)(k):= f(kg)$. A map of $H$-spaces $\phi: Y_1 \to Y_2$ induces a map
of $G$-spaces $m_H^G(\phi) : m_H^G(Y_1) \to m_H^G(Y_2)$ by composition. In this case
there is a homeomorphism:
$$ \map(r_H^G(X),Y)^H \cong \map(X,m_H^G(Y))^G 
$$ 
for any $G$-space $X$ and any $H$-space $Y$. The maps are defined by:

$$\map(X,m_H^G(Y))^G \rightarrow \map(r_H^G(X),Y)^H, \ \ F 
\mapsto \left( x \mapsto F(x)(1_G) \right)$$
 
$$ \map(r_H^G(X),Y)^H \rightarrow \map(X,m_H^G(Y))^G , \ \ 
f \mapsto  m_H^G(f) \circ p_H^G,$$ 
where $p_H^G : X \to m_H^G(r_H^G(X))$
is defined by the equation $$(p_H^G(x))(g)=gx$$
and is the unit of the adjunction.

The functor $m_H^G$ is called multiplicative because 
$$m_H^G(Y_1 \times Y_2) \cong m_H^G(Y_1) \times m_H^G(Y_2)$$
for any $H$-spaces $Y_1$ and $Y_2$.

Note also that the space $m_H^G(Y)$ is homeomorphic to the space 
$\Gamma(G \times_H Y, G/H)$ of sections of the projection  map $G \times_H Y \to G/H$, 
endowed with the $G$-action given by $(g \cdot \sigma) (kH) = g\sigma(g^{-1}kH)$ 
where $\sigma$ is any section. In the case that $G/H$ is finite the space $m_H^G(Y)$ is 
homeomorphic to the product of $|G \colon H|$ copies of $Y$.

\medskip

Let us now consider the $G$-equivariant bundle
$$m_{G_\rho}^G(\IV_{\rho} \otimes\Hom_{A}(\IV_{\rho},E)) \to m_{G_\rho}^G(X)$$
and construct the pullback bundle
$$(p_{G_\rho}^G)^* \left(m_{G_\rho}^G(\IV_{\rho} \otimes\Hom_{A}(\IV_{\rho},E)) \right) \to X.$$

The  $G$-equivariant bundle $(p_{G_\rho}^G)^* \left(m_{G_\rho}^G(\IV_{\rho} 
\otimes\Hom_{A}(\IV_{\rho},E)) \right)$ 
is endowed with a natural $G$-equivariant map to $E$, defined
by the restriction of the natural map 
$$\map(G,\IV_{\rho} \otimes\Hom_{A}(\IV_{\rho},E))^{G_\rho} \to E; 
\ \ \phi \mapsto ev(\phi(1_G))$$
induced by the evaluation
map $ev: \IV_{\rho} \otimes\Hom_{A}(\IV_{\rho},E) \to E$, 
$ev( v \otimes f)=f(v)$. Therefore we have constructed the desired 
$G$-vector bundle over $X$.

\begin{theorem} \label{theorem decomposition vector bundle}
Let $G$ be a compact Lie group and $E$ a $G$-equivariant complex vector bundle 
over the compact $G$-space $X$.  If the action on $X$ by the normal subgroup $A$ of $G$ is 
trivial, then the following decomposition formula is an
isomorphism of $G$-equivariant bundles
$$\bigoplus_{\rho \in G \backslash \Irr(A)} (p_{G_\rho}^G)^* \left(m_{G_\rho}^G(\IV_{\rho} 
\otimes\Hom_{A}(\IV_{\rho},E)) \right)\stackrel{\cong}{\to} E$$
where $\rho$ runs over representatives of the orbits of the $G$-action on the 
set of isomorphism classes of  irreducible $A$-representations.
\end{theorem}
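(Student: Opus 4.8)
The plan is to show that the natural $G$-equivariant evaluation map described just before the statement is an isomorphism, and the key point is that this can be checked after forgetting the $G$-action down to $A$, since a $G$-equivariant bundle map is an isomorphism iff it is an isomorphism of underlying (non-equivariant, here $A$-equivariant) bundles. So first I would assemble the map: for each orbit representative $\rho \in G\backslash\Irr(A)$ we have the $G_\rho$-equivariant bundle $\IV_\rho \otimes \Hom_A(\IV_\rho, E)$ over $X$, we apply multiplicative induction $m_{G_\rho}^G$ and pull back along the unit $p_{G_\rho}^G : X \to m_{G_\rho}^G(r_{G_\rho}^G X)$ to obtain a $G$-bundle over $X$, and the evaluation map $ev : \IV_\rho \otimes \Hom_A(\IV_\rho, E) \to E$ induces, by the adjunction formulas recalled above, a $G$-equivariant map $(p_{G_\rho}^G)^*(m_{G_\rho}^G(\IV_\rho \otimes \Hom_A(\IV_\rho,E))) \to E$. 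Summing over the finitely many orbits (finiteness because $G_\rho$ contains the identity component of the compact group $G$, as $G$ acts on the discrete set $\Irr(A)$) gives the map in the statement.

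Next I would identify the underlying $A$-bundle of each summand. Since $A$ is normal and $A \subseteq G_\rho$, and since $G/G_\rho$ is finite, the space $m_{G_\rho}^G(Y)$ is, after restriction to $A$, homeomorphic to a product $\prod_{[g_i] \in G/G_\rho} Y$ with $A$ acting on the $i$-th factor through the isomorphism $Y \cong g_i \cdot Y$ (concretely, as recalled, $m_{G_\rho}^G(Y)$ is the space of sections of $G \times_{G_\rho} Y \to G/H$, and over the finite base $G/G_\rho$ this section space splits as a product indexed by the cosets). Pulling back along $p_{G_\rho}^G$ and tracking the action, the underlying $A$-bundle of $(p_{G_\rho}^G)^*(m_{G_\rho}^G(\IV_\rho \otimes \Hom_A(\IV_\rho,E)))$ becomes $\bigoplus_{[g_i]\in G/G_\rho} \IV_{g_i\cdot\rho} \otimes \Hom_A(\IV_{g_i\cdot\rho}, E)$, because $g_i\cdot(\IV_\rho\otimes\Hom_A(\IV_\rho,E)) \cong \IV_{g_i\cdot\rho}\otimes\Hom_A(\IV_{g_i\cdot\rho},E)$ as $A$-bundles. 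Moreover under this identification the induced map to $E$ is exactly the sum of the $A$-equivariant evaluation maps $\IV_{g_i\cdot\rho}\otimes\Hom_A(\IV_{g_i\cdot\rho},E) \to E$.

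Summing over orbit representatives $\rho$, the underlying $A$-equivariant map of the whole displayed map becomes
$$\bigoplus_{\rho \in G\backslash\Irr(A)}\ \bigoplus_{[g_i]\in G/G_\rho} \IV_{g_i\cdot\rho}\otimes\Hom_A(\IV_{g_i\cdot\rho},E) \longrightarrow E,$$
and since as $\rho$ ranges over orbit representatives and $g_i$ over coset representatives of $G/G_\rho$ the products $g_i\cdot\rho$ range exactly once over all of $\Irr(A)$, this is precisely the canonical evaluation isomorphism $\bigoplus_{[\tau]\in\Irr(A)} \IV_\tau\otimes\Hom_A(\IV_\tau,E) \xrightarrow{\cong} E$ of \cite[Proposition 2.2]{SegalK} recalled in formula (\ref{A-decomposition}). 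Hence the displayed $G$-equivariant map is an isomorphism of underlying $A$-bundles, therefore an isomorphism of $G$-bundles, which proves the theorem.

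I expect the main obstacle to be the bookkeeping in the second paragraph: making the identification of the underlying $A$-bundle of $(p_{G_\rho}^G)^*(m_{G_\rho}^G(-))$ precise and $A$-equivariantly natural, including checking that the $A$-action really does permute the $G/G_\rho$-indexed factors according to the conjugation action of $G$ on $\Irr(A)$ and that, after this identification, the map to $E$ is the sum of evaluation maps. This requires unwinding the definitions of $m_H^G$, the unit $p_H^G(x)(g) = gx$, and the evaluation-induced map $\phi \mapsto ev(\phi(1_G))$ carefully; everything else is formal once that identification is in place. One should also remark (or verify) that the summand corresponding to the trivial representation behaves consistently and that Hermitian metrics can be chosen $G$-invariantly so that the splittings are honest direct-sum decompositions, but these are routine.
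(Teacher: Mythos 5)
Your proposal is correct and follows essentially the same route the paper (implicitly) takes: the theorem carries no separate proof block, the construction of the map via $m_{G_\rho}^G$, the unit $p_{G_\rho}^G$, and evaluation in the paragraphs preceding the statement being the paper's argument, together with the observation that over each coset $g_iG_\rho$ one recovers the $g_i\cdot\rho$-isotypical piece. Your added step — reducing the isomorphism check to underlying $A$-bundles, where it unwinds (since $G/G_\rho$ is finite) to the Segal isotypical decomposition (\ref{A-decomposition}) — is precisely the bookkeeping the paper leaves to the reader.
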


Note that when the Lie group $G$ is connected then $G_\rho=G$ for all $\rho$ and 
the decomposition simplifies to the isomorphism
$$\bigoplus_{\rho \in   \Irr(A)} \IV_{\rho} 
\otimes\Hom_{A}(\IV_{\rho},E)\stackrel{\cong}{\to} E$$
of $G$-equivariant bundles. This is also the case for example when $G$ is abelian.

The decomposition formula of the equivariant vector bundle 
$E$ induces an isomorphism in K-theory as follows:

\begin{corollary} \label{corollary decomposition K-theory} 
Let $G$ be a compact Lie group, $X$ a $G$ space on which the normal 
subgroup $A$ acts trivially. Then there is a natural isomorphism
$$ K^*_G(X) \cong \bigoplus_{\rho \in G \backslash \Irr(A)}
{}^{\widetilde{Q}_{\rho}} K^*_{Q_\rho}(X)$$
where  $\rho$ runs over representatives of the orbits of the 
$G$-action on the set of isomorphism classes of irreducible $A$-representations 
and $Q_\rho=G_\rho/A$. 
\end{corollary}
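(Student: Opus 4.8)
The plan is to deduce Corollary \ref{corollary decomposition K-theory} directly from the geometric decomposition in Theorem \ref{theorem decomposition vector bundle} together with the identification of $(G,\rho)$-equivariant vector bundles with twisted equivariant vector bundles provided by Theorem \ref{centralaction} and Corollary \ref{equivalencetwistedK}. First I would treat the case $*=0$; the general case $n>0$ follows formally by replacing $X$ with $\Sigma^n X_+$ and using the reduced groups, since all the constructions involved are natural in $X$ and additive.

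The first step is to promote Theorem \ref{theorem decomposition vector bundle} to a statement about monoids of isomorphism classes. For a representative $\rho$ of an orbit in $G\backslash\Irr(A)$, each summand $(p_{G_\rho}^G)^*(m_{G_\rho}^G(\IV_\rho\otimes\Hom_A(\IV_\rho,E)))$ is built functorially out of the $G_\rho$-bundle $\IV_\rho\otimes\Hom_A(\IV_\rho,E)$ over $X$, which is by construction a $(G_\rho,\rho)$-equivariant vector bundle. Conversely, I claim that multiplicative induction followed by the pullback along the unit $p_{G_\rho}^G$ sends an arbitrary $(G_\rho,\rho)$-equivariant vector bundle $W$ over $X$ to a $G$-equivariant vector bundle over $X$, and that applying the decomposition of Theorem \ref{theorem decomposition vector bundle} to the result recovers $W$ in the $\rho$-slot (all other slots being zero, because over $X$ the fibers of $(p_{G_\rho}^G)^*(m_{G_\rho}^G(W))$ are, as $A$-representations, sums of the $A$-representations $g_i\cdot\rho$ ranging over $G/G_\rho$, which exhausts exactly the $G$-orbit of $\rho$ and nothing else). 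Since multiplicative induction is additive — indeed $m_{G_\rho}^G(Y_1\times Y_2)\cong m_{G_\rho}^G(Y_1)\times m_{G_\rho}^G(Y_2)$, and on vector bundles fibrewise products over $X$ correspond to Whitney sums — and $p_{G_\rho}^G{}^*$ is additive, the assignment $W\mapsto (p_{G_\rho}^G)^*(m_{G_\rho}^G(W))$ is a morphism of semigroups. Combining this with the inverse direction, Theorem \ref{theorem decomposition vector bundle} upgrades to an isomorphism of semigroups
$$\bigoplus_{\rho\in G\backslash\Irr(A)}\mathrm{Vec}_{G_\rho,\rho}(X)\;\xrightarrow{\ \cong\ }\;\mathrm{Vec}_G(X),$$
where the left-hand side is the coproduct of commutative monoids (direct sum of the underlying sets of isomorphism classes, with the evident addition).

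The second step is purely formal. Applying the Grothendieck group functor, which preserves finite coproducts of commutative monoids, yields a natural isomorphism $\bigoplus_{\rho} K_{G_\rho,\rho}^0(X)\xrightarrow{\cong} K_G^0(X)$. Then Corollary \ref{equivalencetwistedK}, applied with the group $G_\rho$ in place of $G$ (note $\rho$ is, by definition of $G_\rho$, fixed by all of $G_\rho$, so the hypothesis is met, and $Q_\rho=G_\rho/A$), identifies $K_{G_\rho,\rho}^0(X)$ with ${}^{\widetilde{Q}_\rho}K_{Q_\rho}^0(X)$ as $R(Q_\rho)$-modules. Substituting gives the claimed isomorphism for $*=0$; naturality in $X$ and the suspension argument promote it to all degrees.

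The main obstacle I anticipate is the verification, in the first step, that the composite $W\mapsto(p_{G_\rho}^G)^*(m_{G_\rho}^G(W))\to E$ and the evaluation/restriction maps assemble into mutually inverse semigroup isomorphisms — in particular, that feeding a multiplicatively-induced-and-pulled-back bundle back into the $A$-isotypical decomposition reproduces $W$ in its own slot and kills the others. This is where one must chase through the adjunction homeomorphisms $\map(X,m_{G_\rho}^G(Y))^G\cong\map(r_{G_\rho}^G X,Y)^{G_\rho}$ at the level of total spaces of bundles, check that the unit $p_{G_\rho}^G$ and the evaluation map $ev$ interact correctly over the finite set $G/G_\rho$, and confirm compatibility with Hermitian metrics; the example \ref{exampletwopoints} of $D_8$ is exactly the toy model to keep in mind. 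Once this bookkeeping is done, the passage to K-theory is immediate.
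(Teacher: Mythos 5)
Your proposal is correct and follows essentially the same route as the paper: both pass from Theorem \ref{theorem decomposition vector bundle} to a natural isomorphism $\bigoplus_{\rho}K^*_{G_\rho,\rho}(X)\cong K^*_G(X)$ via the multiplicative-induction-plus-pullback construction $E_\rho\mapsto (p_{G_\rho}^G)^*(m_{G_\rho}^G E_\rho)$, and then invoke Corollary \ref{equivalencetwistedK} to rewrite each summand as ${}^{\widetilde{Q}_\rho}K^*_{Q_\rho}(X)$. The paper's proof is only stated in one line; your write-up simply makes explicit the intermediate semigroup isomorphism and the Grothendieck/suspension formalities that the paper leaves implicit.
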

\begin{proof}     
The isomorphism follows from the isomorphism
\begin{align*}\bigoplus_{\rho \in G \backslash \Irr(A)}  
K^*_{G_\rho,\rho}(X)  &\stackrel{\cong}{\to} K_G^*(X)\\ 
\bigoplus_{\rho \in G \backslash \Irr(A)} E_\rho& 
\mapsto \bigoplus_{\rho \in G \backslash \Irr(A)} (p_{G_\rho}^G)^*(m_{G_\rho}^G E_\rho)
\end{align*}
and Corollary \ref{equivalencetwistedK}.
\end{proof}

\section{The decomposition at the level of classifying spaces}\label{section classifying spaces}

In this section we will write the results of the previous section at the 
level of the classifying space of $G$-equivariant complex vector bundles. 
This will show us how the spectrum of $G$-equivariant
K-theory decomposes at the fixed point set of each subgroup.

The universal bundles for twisted equivariant K-theory associated to central extensions 
of the group $G$ are constructed as follows. Consider a central extension
$$1\to \IS^1 \to \widetilde{G} \to G\to 1$$
of the compact Lie group $G$ by $\IS^{1}$. Let $\bf \widetilde{C}^\infty$ denote the 
direct sum of countable many copies of all  irreducible $\widetilde{G}$-representations
on which elements in $\IS^1= \text{ker}(\widetilde{G} \to G)$ act by scalar multiplication of 
their inverse. Let ${}^{\widetilde{G}}B_GU(n)$ denote the Grassmannian of $n$-dimensional
complex subspaces of $\bf \widetilde{C}^\infty$ and denote by 
${}^{\widetilde{G}}\gamma_GU(n)$ the universal
$n$-plane bundle over ${}^{\widetilde{G}}B_GU(n)$. The complex vector bundle 
\begin{align} \label{universal twisted equivariant U(n) bundle} \IC^{n} \to
{}^{\widetilde{G}}\gamma_GU(n) \to 
{}^{\widetilde{G}}B_GU(n)
\end{align}  is the universal $\widetilde{G}$-twisted $G$-equivariant complex vector 
bundle of rank $n$, and therefore for  a finite  $G$-CW complex $X$  we have
$${}^{\widetilde{G}}\text{Vec}_{G}^n(X) \cong [X, {}^{\widetilde{G}}B_GU(n)]_G.$$ 
Note that since $\IS^1$ acts by multiplication of the inverse of scalars, 
its action on the Grassmannian of complex $n$-planes is trivial, and therefore 
the $\widetilde{G}$-action on ${}^{\widetilde{G}}B_GU(n)$ reduces to a $G$-action.
If $V\subset \bf \widetilde{C}^\infty$ is a finite dimensional complex 
$\widetilde{G}$-subrepresentation, then there is a map
$${}^{\widetilde{G}}\gamma_GU(n) \oplus \IV \to {}^{\widetilde{G}}\gamma_GU(n+|V|)$$
which induces a map $\iota_V : {}^{\widetilde{G}}B_GU(n) 
\to {}^{\widetilde{G}}B_GU(n+|V|)$ at the level of the classifying spaces.
The colimit
\begin{align} \label{colimit twisted bundle}
{}^{\widetilde{G}}B_GU:= \colim_{V \subset \bf \widetilde{C}^\infty}
\bigsqcup_{n \geq 0 } {}^{\widetilde{G}}B_GU(n)
\end{align}
is the classifying space for reduced $\widetilde{G}$-twisted $G$-equivariant complex K-theory
$$ {}^{\widetilde{G}}\widetilde{K}^0_G(X) \cong [X, {}^{\widetilde{G}}B_GU]_G$$
for $X$ a finite $G$-CW complex.

Whenever the extension is trivial $\widetilde{G} \cong \IS^1 \times G$,
the spaces $ {}^{\IS^1 \times G}B_GU(n)$ classify
$G$-equivariant $U(n)$-principal bundles, and therefore we may denote 
$ B_GU(n):= {}^{\IS^1 \times G}B_GU(n)$ and
$B_GU:= {}^{\IS^1 \times G}B_GU$, thus having that for $X$ a compact $G$ space
$$\widetilde{K}^0_G(X)\cong [X,  B_GU]_G.$$
Furthermore, the
vector bundles $\gamma_GU(n):={}^{\IS^1 \times G}\gamma_GU(n)$ are the universal 
$G$-equivariant complex vector bundles of rank $n$.

Suppose now that $A$ is a closed subgroup of the compact Lie group $G$. Consider
the fixed point set $B_GU(n)^A$ and the restriction $\gamma_GU(n)|_{B_GU(n)^A}$
of the universal vector bundle. Denote by $N_A$ the normalizer of $A$ in $G$ and by
$W_A=N_A/A$ the quotient. Therefore $\gamma_GU(n)|_{B_GU(n)^A}\to B_GU(n)^{A}$ 
is a $N_{A}$-equivariant vector bundle such that $A$ acts trivially on the base 
space. In this way we are in the situation of the previous section for the 
short exact sequence $1\to A\to N_{A}\to W_{A}\to 1$. 
Take $\rho \in \Irr(A)$. By Theorem \ref{centralaction} the bundle
$\Hom_{A}(\IV_{\rho},\gamma_GU(n)|_{B_GU(n)^A})$ is a $(\widetilde{W_A})_\rho$-twisted
$(W_A)_\rho$-equivariant complex bundle, but since the space $B_GU(n)^A$ is not 
necessarily connected, it may not have constant rank. Therefore in order to construct a universal
$N_{A}$-equivariant complex bundle over spaces with trivial $A$-actions using universal
$(\widetilde{W_A})_\rho$-twisted $(W_A)_\rho$-equivariant complex bundles we need
to work with bundles of all ranks. We claim the following result:

\begin{theorem} 
\label{theorem decomposition of B_GU(n)}
There is a $W_A$-equivariant homotopy equivalence
$$\bigsqcup_{n=0}^{\infty}B_GU(n)^A \simeq 
\prod_{\rho \in W_A \backslash \Irr(A)} m^{W_A}_{(W_A)_\rho} 
\left( \bigsqcup_{n_\rho=0}^{\infty}{}^{(\widetilde{W_A})_\rho}B_{(W_A)_\rho}U(n_\rho) \right).$$
The stable version is
$$B_GU^A \simeq  \prod_{\rho \in W_A \backslash \Irr(A)} m^{W_A}_{(W_A)_\rho} 
\left({}^{(\widetilde{W_A})_\rho}BU_{(W_A)_\rho}U \right)$$
as $W_A$-spaces.
\end{theorem}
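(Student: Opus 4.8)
The plan is to transport the decomposition of Theorem \ref{theorem decomposition vector bundle} and the twisting correspondence of Theorem \ref{centralaction} to the universal bundle over the fixed point space $B_GU(n)^A$, and then to recognize the resulting comparison map as a $W_A$-homotopy equivalence by equivariant representability. The relevant extension is $1\to A\to N_A\to W_A\to 1$. Since $A$ acts on $\Irr(A)$ through inner automorphisms, the $N_A$-action on $\Irr(A)$ factors through $W_A$, so $N_A\backslash\Irr(A)=W_A\backslash\Irr(A)$, the stabilizer $(N_A)_\rho$ is the preimage of $(W_A)_\rho$, $(N_A)_\rho/A=(W_A)_\rho$, and $W_A/(W_A)_\rho$ is finite, as in the remarks following Theorem \ref{theorem decomposition vector bundle}. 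I would first record the two representability inputs. Restricting the complete $G$-universe to $N_A$ leaves it complete, so $\bigsqcup_{n\geq 0}B_GU(n)$, viewed as an $N_A$-space, classifies finite-rank $N_A$-equivariant complex vector bundles; hence $\bigsqcup_{n\geq 0}B_GU(n)^A$ is the $W_A$-space classifying finite-rank $N_A$-equivariant complex bundles over $W_A$-spaces (that is, $N_A$-spaces with trivial $A$-action), with universal bundle $\gamma:=\gamma_GU(n)|_{B_GU(n)^A}$ on the rank-$n$ component. By the construction recalled around \eqref{universal twisted equivariant U(n) bundle}, $Z_\rho:=\bigsqcup_{n_\rho\geq 0}{}^{(\widetilde{W_A})_\rho}B_{(W_A)_\rho}U(n_\rho)$ is the $(W_A)_\rho$-classifying space for finite-rank $(\widetilde{W_A})_\rho$-twisted $(W_A)_\rho$-equivariant complex bundles.

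Next I would construct the comparison map. Fix $\rho\in W_A\backslash\Irr(A)$. By Theorem \ref{centralaction} applied to $1\to A\to (N_A)_\rho\to (W_A)_\rho\to 1$ and the bundle $\gamma$, whose base carries a trivial $A$-action, $\Hom_A(\IV_\rho,\gamma)$ is a $(\widetilde{W_A})_\rho$-twisted $(W_A)_\rho$-equivariant complex bundle over $B_GU(n)^A$, hence is classified by a $(W_A)_\rho$-map $\phi_\rho\colon B_GU(n)^A\to Z_\rho$. Under the adjunction $\map(r^{W_A}_{(W_A)_\rho}(-),-)^{(W_A)_\rho}\cong\map(-,m^{W_A}_{(W_A)_\rho}(-))^{W_A}$ recalled in Section~\ref{twsitedKtheory}, $\phi_\rho$ corresponds to a $W_A$-map $\Phi_\rho\colon B_GU(n)^A\to m^{W_A}_{(W_A)_\rho}(Z_\rho)$; here one checks, exactly as in the discussion of multiplicative induction, that because $A$ acts trivially on $B_GU(n)^A$ the induction $m^{N_A}_{(N_A)_\rho}$ coincides with $m^{W_A}_{(W_A)_\rho}$ on it. Assembling over $\rho$ and over $n$ yields a $W_A$-map $\Phi=(\Phi_\rho)_\rho\colon \bigsqcup_{n\geq 0}B_GU(n)^A\to\prod_{\rho\in W_A\backslash\Irr(A)}m^{W_A}_{(W_A)_\rho}(Z_\rho)$, which I claim is the asserted equivalence.

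To verify that $\Phi$ is a $W_A$-homotopy equivalence, it suffices—by the equivariant Whitehead theorem in its Yoneda form, provided source and target have the $W_A$-homotopy type of $W_A$-CW complexes—to show that $\Phi$ induces a bijection on $[Y,-]_{W_A}$ for every finite $W_A$-CW complex $Y$ (the general case following by a colimit argument). On the target, the $m$-adjunction gives $[Y,\prod_\rho m^{W_A}_{(W_A)_\rho}(Z_\rho)]_{W_A}\cong\prod_\rho{}^{(\widetilde{W_A})_\rho}\text{Vec}_{(W_A)_\rho}(Y)$; on the source, $[Y,\bigsqcup_n B_GU(n)^A]_{W_A}\cong\text{Vec}_{N_A}(Y)$. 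Theorem \ref{theorem decomposition vector bundle} together with Theorem \ref{centralaction} (the monoid-level statement underlying Corollary \ref{corollary decomposition K-theory}) identifies the two via $[E]\mapsto([\Hom_A(\IV_\rho,E)])_\rho$, with inverse $([F_\rho])_\rho\mapsto[\bigoplus_\rho(p^{N_A}_{(N_A)_\rho})^*(m^{N_A}_{(N_A)_\rho}(\IV_\rho\otimes F_\rho))]$, and unwinding the definitions (naturality of the $m$-adjunction and of pullback, and the fact that $\Hom_A(\IV_\rho,-)$ commutes with pullback) shows this bijection is exactly the one induced by $\Phi$. For the stable statement one passes to the colimit over $V\subset{\bf \widetilde{C}^\infty}$: the maps $\Phi$ are compatible with the stabilization maps $\iota_V$, the functor $(-)^A$ commutes with that filtered colimit, and $m^{W_A}_{(W_A)_\rho}$—being a finite product of copies of its argument since $W_A/(W_A)_\rho$ is finite—commutes with it as well, so passing to the colimit yields $B_GU^A\simeq\prod_{\rho\in W_A\backslash\Irr(A)}m^{W_A}_{(W_A)_\rho}({}^{(\widetilde{W_A})_\rho}B_{(W_A)_\rho}U)$ as $W_A$-spaces.

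The step I expect to be the main obstacle is the passage from Theorem \ref{theorem decomposition vector bundle}, which is stated for an individual bundle, to a \emph{natural} isomorphism of the represented functors. Concretely, one must check that the ``multiplicative induction followed by pullback'' construction is natural under base change along $W_A$-maps; that it is a two-sided inverse, up to $G$-bundle isomorphism, to extracting $A$-isotypical multiplicity bundles—in particular that $\Hom_A(\IV_\sigma,-)$ annihilates the $\rho$-summand $(p^{N_A}_{(N_A)_\rho})^*(m^{N_A}_{(N_A)_\rho}(\IV_\rho\otimes F_\rho))$ when $\sigma\neq\rho$ among the chosen orbit representatives and returns $F_\rho$ when $\sigma=\rho$; and, at the point-set level, that $B_GU(n)^A$ and the spaces $m^{W_A}_{(W_A)_\rho}(Z_\rho)$ have the $W_A$-homotopy type of $W_A$-CW complexes, so that equivariant representability is applicable.
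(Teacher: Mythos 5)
Your proposal is correct and follows essentially the same route as the paper: identify $\bigsqcup_n B_GU(n)^A$ as the $W_A$-classifying space for $N_A$-equivariant complex bundles over $A$-trivial spaces, build the comparison map from the twisted classifying map $\phi_\rho$ for $\Hom_A(\IV_\rho,\gamma)$ via the $m^{W_A}_{(W_A)_\rho}$-adjunction, and deduce the equivalence from Theorem \ref{theorem decomposition vector bundle} (with Theorem \ref{centralaction}). The paper is terser—it constructs maps in both directions by universal-bundle classifying maps and simply cites Theorem \ref{theorem decomposition vector bundle} and Corollary \ref{corollary decomposition K-theory} for the equivalence and its stable form—whereas you make explicit the representability/Whitehead argument, the naturality one has to check, and the colimit commutation for the stable statement; but the underlying strategy is identical.
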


\begin{proof}
To start notice that by \cite[\S V, Lem. 4.7 \& \S VII, Thm. 2.4]{May-book} it follows that 
$B_GU(n)^A$ classifies $N_{A}$-equivariant complex vector bundles 
of rank $n$ over $A$-trivial $N_{A}$-spaces. Therefore $\bigsqcup_{n=0}^{\infty}B_GU(n)^A$ 
classifies $N_{A}$-equivariant complex bundles (of any rank) 
over spaces with trivial $A$-actions. The theorem will follow by Theorem 
\ref{theorem decomposition vector bundle} since both sides classify
$N_{A}$-equivariant complex bundles over spaces with trivial $A$-actions. Let us define the maps. 

Since $\Hom_{A}(\IV_{\rho},\gamma_GU(n)|_{B_GU(n)^A})$ is a $(\widetilde{W_A})_\rho$-twisted
$(W_A)_\rho$-equivariant complex bundle there is a $(W_A)_\rho$-equivariant classifying map
$$f_\rho:B_GU(n)^A \to \bigsqcup_{n_\rho=0}^{\infty}{}^{(\widetilde{W_A})_\rho}B_{(W_A)_\rho}U(n_\rho)$$ 
which induces a $W_A$-equivariant map
$$m^{W_A}_{(W_A)_\rho} (f_\rho) \circ p^{W_A}_{(W_A)_\rho} : B_GU(n)^A \to m^{W_A}_{(W_A)_\rho} 
\left( \bigsqcup_{n_\rho=0}^{\infty}{}^{(\widetilde{W_A})_\rho}B_{(W_A)_\rho}U(n_\rho) \right).$$
This constructs the map from left to right.

For the map from the right to the left, we know from Theorem \ref{centralaction} that
$$\bigsqcup_{n_\rho=0}^{\infty} \IV_{\rho} \otimes {}^{(\widetilde{W_A})_\rho}\gamma_{W_A}U(n_\rho)$$
is a $(N_A)_\rho$-equivariant complex bundle and 
$$ m^{N_A}_{(N_A)_\rho} \left(\bigsqcup_{n_\rho=0}^{\infty}\IV_{\rho} 
\otimes {}^{(\widetilde{W_A})_\rho}\gamma_{W_A}U(n_\rho) \right)$$
is a $N_A$-equivariant complex bundle. The product over $\rho \in W_A \backslash \Irr(A)$ is 
also a $N_A$-equivariant complex bundle and therefore there is a classifying map
$$\prod_{\rho \in W_A \backslash \Irr(A)} m^{W_A}_{(W_A)_\rho} 
\left( \bigsqcup_{n_\rho=0}^{\infty}{}^{(\widetilde{W_A})_\rho}B_{(W_A)_\rho}U(n_\rho) \right) \to
\bigsqcup_{n=0}^{\infty}B_GU(n)^A.$$
The homotopy equivalence follows from Theorem \ref{theorem decomposition vector bundle}. 
The homotopy equivalence of the stable version follows from 
Corollary \ref{corollary decomposition K-theory}.

\end{proof}

\begin{remark} \label{remark distribution} Distributing the product over union 
we obtain a homeomorphism 
\begin{align*}
\prod_{\rho \in W_A \backslash \Irr(A)} m^{W_A}_{(W_A)_\rho} 
\left(\bigsqcup_{n_\rho=0}^{\infty}  {}^{(\widetilde{W_A})_\rho}B_{(W_A)_\rho}U(n_\rho) \right) &\\
\cong 
\bigsqcup_{n=0}^\infty \bigsqcup_{\sum n_\rho |\rho|=n} \prod_{\rho \in \Irr(A)} 
& {}^{(\widetilde{W_A})_\rho}B_{(W_A)_\rho}U(n_\rho) 
\end{align*}
where $|\rho|$ denotes the complex dimension of the representation $\rho$. 
We note that the expression on right hand side is not canonically a $W_A$-space, 
therefore we induce the $W_A$-action on the  right hand side from the one of the 
expression on the left.

Let $\calp(n,A)$ be the set of arrays $P=(n_\rho)_{\rho \in  \Irr(A)}$ such that 
$$\sum_{\rho  \in \Irr(A)} n_\rho |\rho|=n$$ 
Restricting to $G$ equivariant complex bundles of rank $n$ over $G$-spaces 
one gets the $W_A$-homotopy equivalence
   $$B_GU(n)^A \simeq \bigsqcup_{{P \in \calp(n,A)   }} 
\prod_{\rho \in  \Irr(A)} 
\left({}^{(\widetilde{W_A})_\rho}B_{(W_A)_\rho}U(n_\rho) \right)$$
where $W_A$ acts on $\calp(n,A)$ permuting the arrays of numbers according to its action
on $\Irr(A)$, and 
the isotropy subgroups $({W_A})_\rho$ act on the appropriate coordinate space
${}^{(\widetilde{W_A})_\rho}B_{(W_A)_\rho}U(n_\rho)$.

Whenever $G$ is abelian $G$ acts trivially on $\Irr(A)$ and therefore we 
recover the homotopy equivalence of $G/A$-spaces
$$B_GU(n)^A \simeq \bigsqcup_{{P \in \calp(n,A)}} \prod_{\rho \in \Irr(A)} B_{G/A}U(n_\rho)$$
that appeared in \cite[ \S XXVI, Prop. 4.3]{May-book}. Note that in this case $\sum_\rho n_\rho=n$ since all irreducible 
representations of $A$ are 1-dimensional. 

Whenever the normalizer of $A$ is connected, the right hand
side simplifies as $$B_GU(n)^A \simeq \bigsqcup_{{P \in \calp(n,A) }} 
\prod_{\rho \in  \Irr(A)} {}^{\widetilde{W_A}_\rho}B_{W_A}U(n_\rho)$$
where $\widetilde{W_A}_\rho$ is the $\IS^1$-central extension of $W_A$ that depends on $\rho$.
\end{remark}

\section{Equivariant unitary bordism}\label{section bordism}

The decomposition of equivariant complex vector bundles on fixed point sets 
carried out in the previous sections is a key ingredient in the calculation of 
equivariant unitary bordisms for families. Conner and Floyd in their monumental 
work on the study of the bordism groups \cite{ConnerFloyd-book, ConnerFloyd-Odd} 
introduced the use of families of subgroups in order 
to restrict the bordisms to manifolds whose isotropy groups lie in a prescribed family.

Here we will concentrate in the tangentially stably equivariant unitary bordism groups 
$\Omega^G_*$, which will be called the geometric $G$-equivariant unitary bordisms. 
The explicit definition of these homology groups and their stable versions 
can be found in \cite[\S 2]{Hanke} and in \cite[  \S XXVI, Def. 3.1]{May-book}.  
Let us recall the main ingredients.  In all what follows $G$ will be a compact Lie group.

\begin{definition}
Let $M$ be a smooth $G$-manifold. A tangentially stably almost complex $G$-structure on $M$ 
is a complex $G$-structure on $TM \oplus \underline{\IR}^k$ for some $k\ge 0$, where 
$\underline{\IR}^k$ denotes the trivial bundle $M\times \IR^k$ over $M$ with trivial $G$-action, 
i.e. there exists a $G$-equivariant complex bundle $\xi$ over $M$ such that 
$TM \oplus \underline{\IR}^k \cong \xi$ as $G$-equivariant real vector bundles. Two tangentially 
stably almost complex $G$-structures are identified if after stabilization with further 
$G$-trivial $\underline{\IC}$ summands the structures become $G$-homotopic through complex 
$G$-structures. 
\end{definition}

With this definition if $H$ is a closed subgroup of $G$ then the fixed points 
$M^H$ also have a tangentially stably almost complex $N_{H}$-structure. Moreover,
a $N_{H}$-tubular neighborhood around $M^H$ in $M$ possesses a $N_{H}$-complex structure 
by \cite[\S XXVI, Prop. 3.2]{May-book}, let us see why. The bundle $ TM|_{M^H}$ contains  $T(M^H)$ as a subbundle and
$$
TM|_{M^H} = T(M^H) \oplus \nu(M^H,M)
$$
where  $\nu(M^H,M)$ is the normal bundle of $M^H$ in $M$. Also  $T(M^H)=(TM|_{M^H})^H$  
is a $N_H$-equivariant real vector bundle.

Given the tangentially stably almost complex $G$-structure $\xi$, we have that $\xi|_{M^H}$ is a complex vector bundle over $M^H$  with a complex vector subbundle $\xi^H$ also 
$$\xi^H \cong \left ( TM \oplus \underline{\IR}^k \right )^H =(TM|_{M^H})^H \oplus \underline{\IR}^k= T(M^H) \oplus \underline{\IR}^k,$$ 
therefore $M^H$ has a tangentially stably almost complex $N_H$-structure. 

Now, since 
$$\xi|_{M^H} \cong 
TM|_{M^H} \oplus \underline{\IR}^k = T(M^H) \oplus \nu(M^H,M)  \oplus \underline{\IR}^k$$
then the normal bundle $\nu(M^H,M)$  is isomorphic to the quotient bundle $\xi|_{M^H} /  \xi^H$, 
which is a $N_H$-complex bundle on $M^H$, thus showing that the normal bundle of  
$M^H$ on $M$ possesses a $N_H$-complex structure. 

\begin{definition}
For a cofibration of $G$-spaces $Y \to X$  the geometric $G$-equivariant unitary 
bordism groups $\Omega^G_n(X,Y)$ are defined as $G$-bordism classes of singular tangentially 
stably almost complex $n$-dimensional $G$-manifolds   $(M^n, \partial M^n) \to (X,Y)$. 
\end{definition}

When $G$ is trivial , a tangentially stably almost complex structure is the same as a normally stably almost complex structure and $\Omega^{\{1\}}_n(X,Y)$ is the usual unitary bordism of the pair $(X,Y)$.

One way to study the equivariant bordism groups is through the study of the equivariant 
bordism groups of manifolds $M$ whose isotropies lie in fixed family of subgroups of $G$.  
This way of studying equivariant bordism groups was developed by 
Conner and Floyd \cite[\S 5]{ConnerFloyd-Odd} and it is currently one of the most useful 
techniques to calculate the equivariant bordism groups.

A family of subgroups  $\calf$ of $G$ is a set (possibly empty) consisting of subgroups of $G$ which is 
closed under taking subgroups and under conjugation. Denote by $E\calf$ the 
classifying space for the family $E\calf$,  a $G$-space which is terminal in the 
category of $\calf$-numerable $G$-spaces \cite[\S 1, Thm 6.6]{tomDieck-transformation}, 
and which is characterized by the following properties on fixed point sets: 
$E\calf^H\simeq*$ if $H \in \calf$ and $E\calf^H = \emptyset$ if $H \notin \calf$. 

Given families of subgroups $\calf' \subset \calf$ of $G$ the induced map $E \calf' \to E\calf$ 
can be constructed so that it is a $G$-cofibration.
 
Following tom Dieck (\cite[p. 310]{tomDieck-Orbit-I}), we can define  
equivariant unitary bordism groups for families $\Omega_*^G[\calf,\calf']$ as follows. 
Given a $G$-space $\Omega_*^G[\calf,\calf'](X)$ is defined as 
$$\Omega_*^G[\calf,\calf'](X):= \Omega_*^G(X \times E\calf,X \times E\calf').$$

Alternatively, we may define the geometric $G$-equivariant unitary bordism groups 
$\Omega^G_n \{\calf ,\calf' \}(X,A)$ in a geometric way as was done in \cite[\S 2]{Stong-complex}. 
We recall the definition of the absolute unitary bordism groups $\Omega^G_n \{\calf ,\calf' \}(X)$ 
for completeness.

A $(\calf, \calf')$-{\it free geometric unitary bordism element} of $X$ is an equivalence class of 
a triple $(M,\partial M, f)$, where $M$ is an $n$-dimensional $G$-manifold endowed with 
tangentially stably almost complex $G$-structure which is moreover $\calf$-free, i.e. such that all 
isotropy groups $G_m=\{ g \in G ~|~ gm=m \}$ for $m \in M$ belong to $\calf$,  $\partial M$ is 
$\calf'$-free and  $f:M \to X$ is a $G$-equivariant map. Two triples  $(M,\partial M, f)$ and 
$(M',\partial M', f')$ are equivalent if there exists a $G$-manifold $V$ that is  
$\calf$-free such that $\partial V=M \cup M' \cup V^+$, and $M \cap V^+=\partial M$, 
$M' \cap V^+= \partial M'$, $M \cap M' = \emptyset$, $V^+ \cap(M \cup M') = \partial V^+$ 
 and $V^+$ is $\calf'$-free, together with a $G$-equivariant map 
$F:V\to X$ that restricts to $f$ on $M$ and to $f'$ on $M'$ .

\begin{definition}
The set of equivalence classes of $(\calf, \calf')$-free geometric unitary bordism elements of $X$, 
consisting of classes $(M,\partial M, f)$ where the dimension of $M$ is $n$, and under the operation 
of disjoint union, forms an abelian group denoted by $\Omega^G_n \{\calf ,\calf' \}(X).$ We refer  
to these groups as the geometric unitary bordisms of $X$ restricted to the pair of families 
$\calf' \subset \calf$. The equivalence class corresponding to the triple $(M,\partial M, f)$ will 
be denoted by $[M, \partial M, f]$.
\end{definition}

Notice that if $N$ is a  stably almost complex closed manifold, we can
define $[N] \cdot [M, \partial M, f] := [N \times M, N\times \partial M, f\circ \pi_M]$ thus making  
$\Omega^G_n \{\calf ,\calf' \}(X)$ a module over the unitary bordism ring $\Omega_*$.

The covariant functor $\Omega^G_* \{\calf ,\calf' \}$ defines a $G$-equivariant homology theory 
\cite[Prop. 2.1]{Stong-complex}. A natural transformation 
$\mu : \Omega^G_n \{\calf ,\calf' \}(X) \to \Omega^G_n [\calf ,\calf' ](X)$
can be defined as in  \cite[Satz 3]{tomDieck-Orbit-I} in the following way. 
Suppose that $(M,\partial M, f)$ is a representative of an element in 
$\Omega^G_n \{\calf ,\calf' \}(X)$. Since the inclusion $E\calf' \subset E\calf$ is $G$-cofibration, 
$\partial M$ is $\calf'$-free and $M$ is $\calf$-free, there is a $G$-equivariant map 
$k:M \to E \calf$ such that $k(\partial M)\subset E\calf'$. The $G$-equivariant map
\begin{align*}
(f,k):M &\to X \times E \calf\\
m &\mapsto (f(m),k(m))
\end{align*}
maps $\partial M$ into $X\times E \calf'$ and therefore $(f,k)$ becomes an 
element in $\Omega^G_n [\calf ,\calf' ](X)$.

\begin{proposition}
The natural transformation 
\begin{align*}
\mu : \Omega^G_n \{\calf ,\calf' \}(X) & \to \Omega^G_n [\calf ,\calf' ](X)\\
[M,\partial M,f]  &\mapsto [(M,\partial M, (f,k))]
\end{align*}
is an isomorphism.
\end{proposition}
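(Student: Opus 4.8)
The plan is to show that $\mu$ is an isomorphism by exhibiting an explicit inverse, or at least by checking injectivity and surjectivity directly, in the spirit of tom Dieck's argument in \cite[Satz 3]{tomDieck-Orbit-I}. The key input is that the inclusion $E\calf' \hookrightarrow E\calf$ is a $G$-cofibration together with the two defining properties of classifying spaces of families: $E\calf^H \simeq *$ when $H \in \calf$ and $E\calf^H = \emptyset$ when $H \notin \calf$. The heart of the matter is that for an $\calf$-free $G$-manifold $M$ with $\calf'$-free boundary, a $G$-map $k : M \to E\calf$ with $k(\partial M) \subset E\calf'$ always exists and is unique up to $G$-homotopy relative to data on the boundary; this is exactly the obstruction-theoretic statement that $E\calf$ is terminal among $\calf$-numerable $G$-spaces and that the pair $(E\calf, E\calf')$ has the appropriate relative lifting property against $\calf$-free pairs.

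First I would prove surjectivity. Given a class $[(N, \partial N, (g, e))] \in \Omega^G_n[\calf,\calf'](X)$ represented by a $G$-map $(g,e) : N \to X \times E\calf$ with $(g,e)(\partial N) \subset X \times E\calf'$, the existence of $e : N \to E\calf$ with $e(\partial N) \subset E\calf'$ forces all isotropy groups of points of $N$ to lie in $\calf$ and all isotropy groups of points of $\partial N$ to lie in $\calf'$ (since $E\calf^H = \emptyset$ for $H \notin \calf$). Hence $(N, \partial N, g)$ is already an $(\calf,\calf')$-free geometric bordism element, and $\mu[N, \partial N, g] = [(N, \partial N, (g, k))]$ for some choice of $k$; but any two choices of such a $k$ are $G$-homotopic rel nothing in a way compatible with the boundary constraint, so $[(N,\partial N,(g,k))] = [(N,\partial N,(g,e))]$, giving surjectivity. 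The same uniqueness-up-to-$G$-homotopy statement shows $\mu$ is well-defined to begin with, so it is worth isolating it as a lemma.

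Next I would prove injectivity. Suppose $[M, \partial M, f] \in \Omega^G_n\{\calf,\calf'\}(X)$ has $\mu[M,\partial M, f] = 0$, i.e. there is a $G$-manifold $V$ (with the decomposition of its boundary into $M$, a null-part, and an $\calf'$-free $V^+$ as in the definition) and a $G$-map $(F, K) : V \to X \times E\calf$ with $(F,K)$ sending the $\calf'$-free part into $X \times E\calf'$, restricting to $(f, k)$ on $M$. Because $K$ maps $V$ to $E\calf$ and $V^+$ to $E\calf'$, the manifold $V$ is automatically $\calf$-free and $V^+$ is $\calf'$-free; thus $(V, F)$ is precisely a bordism witnessing $[M, \partial M, f] = 0$ in $\Omega^G_n\{\calf,\calf'\}(X)$. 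So $\mu$ is injective. Both directions therefore reduce to the single observation that the auxiliary $E\calf$-coordinate carries no geometric information beyond constraining isotropy — it can always be supplied and always be discarded.

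The main obstacle, and the step deserving the most care, is the \emph{uniqueness} half of the lemma on existence of $k$: one must argue that if $k_0, k_1 : M \to E\calf$ both restrict to a prescribed $G$-map on $\partial M$ landing in $E\calf'$, then they are $G$-homotopic rel $\partial M$ through maps sending $\partial M$ into $E\calf'$, so that the resulting bordism classes in $\Omega^G_n[\calf,\calf'](X)$ coincide. This follows from the characterization of $E\calf$ as terminal in the homotopy category of $\calf$-numerable $G$-spaces (\cite[\S 1, Thm 6.6]{tomDieck-transformation}) applied to the pair, using that $M \times I$ with the $\calf'$-free sub-space $(\partial M \times I) \cup (M \times \{0,1\})$ is again an $\calf$-free pair and that $E\calf' \hookrightarrow E\calf$ is a $G$-cofibration so that the homotopy can be pushed to respect the $E\calf'$-constraint; invoking the homology-theory property \cite[Prop. 2.1]{Stong-complex} of $\Omega^G_*\{\calf,\calf'\}$ then packages everything. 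Once this lemma is in place, well-definedness, injectivity, and surjectivity of $\mu$ all fall out as above.
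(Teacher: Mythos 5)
Your proposal is correct and follows essentially the same route the paper indicates: the paper does not reproduce a proof but explicitly defers to tom Dieck's Satz 3 in \cite[Satz~3]{tomDieck-Orbit-I}, and your argument is a reconstruction of exactly that argument, with the central points correctly identified (isotropy constraints being automatic from $E\calf^H=\emptyset$ for $H\notin\calf$, and uniqueness of the classifying map up to $G$-homotopy relative to the boundary, via terminality of $E\calf'$ on $\partial M$ followed by the homotopy extension property for the $G$-cofibration $(\partial M\times I)\cup(M\times\{0,1\})\hookrightarrow M\times I$).
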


The proof of this proposition follows the same lines as the one done by tom Dieck in 
\cite[Satz 3]{tomDieck-Orbit-I} in the case of equivariant unoriented bordism. We will 
not reproduce the proof here. 

The long exact sequence of the pair $(X \times  E \calf', X \times  E\calf)$ becomes
\begin{align*}
\cdots \longrightarrow \Omega^G_n \{\calf' \}(X)
 \longrightarrow
\Omega^G_n \{\calf\}(X) \longrightarrow
\Omega^G_n \{\calf ,\calf' \}(X) \longrightarrow
\Omega^G_{n-1} \{\calf' \}(X) \longrightarrow \cdots
\end{align*}

where  $\Omega^G_n \{\calf\}(X) = \Omega^G_n \{\calf ,\emptyset \}(X) $ is the bordism group of  $\calf$-free  tangentially stably almost complex closed manifold  with an equivariant map to $X$.  Note that for finite $G$, when $\calf=\{e\}$ then $\Omega^G_n \{\{1\}\}(X)$ is the bordism group of tangentially stably complex closed manifolds with a free $G$-action and an equivariant map to $X$ which can be identified with  the usual unitary bordism group $\Omega_n(EG \times_G X)$. 

Similarly for three families of representations $\mathcal{F}_1 \subseteq \mathcal{F}_2 \subseteq \mathcal{F}_3$, we have the corresponding long exact sequence of a triple
\begin{align*}
 \to \Omega^G_n \{\calf_2 ,\calf_1 \}(X)
  \to
\Omega^G_n \{\calf_3 ,\calf_1 \}(X)
  \to
\Omega^G_n \{\calf_3 ,\calf_2 \}(X)
 \to
\Omega^G_{n-1} \{\calf_2 ,\calf_1 \}(X)
  \to 
\end{align*}

Following the same argument 
as in \cite[Lemma 5.2]{ConnerFloyd-Odd} we can obtain the next lemma. 

\begin{lemma} \label{lemma equivalent bordism classes}
Let $(M^n,\partial M^n ,f)$ be a $(\calf, \calf')$-free geometric unitary bordism element of 
$X$ and $W^n$ a compact manifold with boundary regularly embedded in the interior of $M^n$ and 
invariant under the $G$-action. If $G_m \in \calf'$ for all $m \in M^n \backslash W^n$, then 
$[M^n,\partial M^n ,f]=[W^n, \partial W_n, f|_{W^n}]$ in $\Omega^G_n \{\calf ,\calf' \}(X)$.
\end{lemma}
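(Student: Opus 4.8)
The plan is to imitate the cylinder argument used to prove \cite[Lemma 5.2]{ConnerFloyd-Odd}: realize the passage from $M^n$ to $W^n$ by the bordism $M^n \times [0,1]$, in which the $\calf'$-free region $\overline{M^n \setminus \inte(W^n)}$ is pushed onto the part $V^+$ of the boundary.

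First I would check that the right-hand side makes sense, i.e. that $(W^n, \partial W^n, f|_{W^n})$ is a $(\calf,\calf')$-free geometric unitary bordism element of $X$. The interior of $W^n$ is $\calf$-free because $M^n$ is, and $W^n$ inherits a tangentially stably almost complex $G$-structure by restricting the complex $G$-structure on $TM^n \oplus \underline{\IR}^k$, while $f|_{W^n}$ is $G$-equivariant. The one point that needs the hypothesis is that $\partial W^n$ is $\calf'$-free: since $W^n$ is regularly embedded in $\inte(M^n)$ and $G$-invariant, $\partial W^n$ has a $G$-invariant bicollar $\partial W^n \times (-\varepsilon,\varepsilon) \hookrightarrow M^n$ on which $G$ acts trivially in the $(-\varepsilon,\varepsilon)$ direction; hence each $p \in \partial W^n$ has the same isotropy group as the points $(p,t)$ with $t>0$, and those lie in $M^n \setminus W^n$, so $G_p \in \calf'$. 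The same argument shows that $C := \overline{M^n \setminus \inte(W^n)}$, a compact $G$-invariant $n$-dimensional submanifold of $M^n$ with $\partial C = \partial M^n \sqcup \partial W^n$ and $C \cap W^n = \partial W^n$, is $\calf'$-free.

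Next I would construct the bordism. Take $V := M^n \times [0,1]$ with the tangentially stably almost complex $G$-structure pulled back along the projection $\pi_{M^n}$ (so that $TV \oplus \underline{\IR}^{k+1}$ acquires a complex $G$-structure), together with $F := f \circ \pi_{M^n} : V \to X$, and decompose its boundary, after the usual corner smoothing, as
$$\partial V = (M^n \times \{0\}) \ \cup \ (W^n \times \{1\}) \ \cup \ V^+, \qquad V^+ := (C \times \{1\}) \cup (\partial M^n \times [0,1]),$$
where the two pieces of $V^+$ are glued along $\partial M^n \times \{1\}$. The verifications required by the definition of the bordism relation are then routine: $V$ is $\calf$-free because $M^n$ is; $V^+$ is $\calf'$-free because $C$ and $\partial M^n$ are; $(M^n\times\{0\}) \cap (W^n\times\{1\}) = \emptyset$; $(M^n\times\{0\})\cap V^+ = \partial M^n\times\{0\} = \partial(M^n\times\{0\})$ and $(W^n\times\{1\})\cap V^+ = \partial W^n\times\{1\} = \partial(W^n\times\{1\})$ since $C \cap W^n = \partial W^n$; $V^+ \cap \big((M^n\times\{0\}) \cup (W^n\times\{1\})\big) = (\partial M^n\times\{0\}) \cup (\partial W^n\times\{1\}) = \partial V^+$; and $F$ restricts to $f$ on $M^n\times\{0\}$ and to $f|_{W^n}$ on $W^n\times\{1\}$. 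The product collars $M^n \times [0,\delta)$ and $W^n\times(1-\delta,1]$ identify the tangential structures induced on the two ends with the given ones. Therefore $(M^n,\partial M^n,f)$ and $(W^n,\partial W^n,f|_{W^n})$ are equivalent, and the lemma follows.

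The only genuinely delicate step is the $\calf'$-freeness of $\partial W^n$, and hence of $C$ and of $V^+$: this is exactly where the regular-embedding hypothesis on $W^n$ enters, through the triviality of the $G$-action in the bicollar direction. Everything else is the standard corner-smoothing and collar-compatibility bookkeeping for geometric equivariant bordism, which I would not spell out in detail.
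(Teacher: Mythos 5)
Your proof is correct, and it is essentially the argument the paper is pointing to: the paper proves this lemma by referencing \cite[Lemma 5.2]{ConnerFloyd-Odd}, and your cylinder $M^n\times[0,1]$, with $V^+ = (\overline{M^n\setminus\inte W^n}\times\{1\})\cup(\partial M^n\times[0,1])$ and the bicollar argument showing $\partial W^n$ (hence $V^+$) is $\calf'$-free, is exactly that Conner--Floyd argument spelled out.
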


A pair of families $\calf' \subset \calf$ of subgroups of $G$ is said to be an 
{\it adjacent pair of families of groups} if $\calf \backslash \calf' = (A)$, where $A$ is a 
subgroup of $G$ and  $(A)$ is the set of subgroups conjugate to $A$ in $G$.
We then say that $\calf $ and $\calf'$ differ by $A$. Notice that if $A$ 
is a normal subgroup of $G$, then a pair of families $\calf$ and $\calf$ 
differ by $A$ precisely if  $\calf= \calf' \sqcup \{A\}$. Moreover, in this case if 
$M$ is a $G$-manifold such that $G_{m}\in \calf$ for every $m\in M$ then the fixed point 
set $M^{A}$ has a free action of $G/A$.
 
Building on the notation of Theorem \ref{theorem decomposition of B_GU(n)} and 
Remark \ref{remark distribution} we denote by $\overline{\calp}(n,A)$ the set of arrays 
$\overline{P}=(n_\rho)_{\rho \in  \Irr(A), \rho \neq 1}$ of non-negative integers, where 
the number $n_1$ associated to the trivial representation is not considered, such that 
$$\sum_{\rho \in \Irr(A), \rho \neq 1} n_\rho |\rho| =n.$$
In the above equation $n_\rho$ is a non-negative integer and $|\rho|$ denotes the complex 
dimension of the representation $\rho$.  Suppose now that $A$ is a closed and normal subgroup 
of $G$. For any such partition $\overline{P}$ we define the space
$$B_{G/A}U(\overline{P}):= \prod_{\rho \in \Irr(A), \rho \neq 1} 
{}^{(\widetilde{G/A})_\rho}B_{(G/A)_\rho}U(n_\rho) $$
with the $G/A$-action induced by the homeomorphism shown in Remark \ref{remark distribution}.

As an application Theorem \ref{theorem decomposition of B_GU(n)} we obtain the following 
decomposition formula for the geometric $G$-equivariant unitary equivariant bordism groups. 
This decomposition is well known for the case of a compact abelian Lie group but is new 
for the case of non abelian groups. Since we follow the same line of argument as in 
the abelian case we only sketch part of its proof. 
 
\begin{theorem} \label{thm bordism for adjacent families}
Suppose that $G$ is a compact Lie group and let $A$ be a closed normal subgroup of $G$.
If $(\calf,\calf')$ is an adjacent pair of families of subgroups  of 
$G$ differing by $A$, then
$$\Omega_n^G\{\calf,\calf' \}(X) \cong \bigoplus_{0 \leq 2k \leq n -{\rm{dim}}(G/A) }
\Omega_{n -2k}^{G/A}\{\{1\}\}\left(X^A \times \bigsqcup_{\overline{P} 
\in \overline{\calp}(k,A)}B_{G/A}U(\overline{P})\right),$$
where $\{1\}$ is the family of subgroups of $G/A$ which only contains the trivial group. 
\end{theorem}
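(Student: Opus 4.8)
The plan is to construct an explicit geometric correspondence between $(\calf,\calf')$-free unitary bordism elements of $X$ and, ranging over $k$, free $G/A$-bordism elements decorated by the normal bundle of the $A$-fixed set, and then to recognize the receiving space by means of Theorem \ref{theorem decomposition of B_GU(n)}. Fix a representative $(M^n,\partial M^n,f)$ of a class in $\Omega_n^G\{\calf,\calf'\}(X)$. Since $A$ is normal and $\calf\setminus\calf'=\{A\}$, the family $\calf'$ contains every proper subgroup of $A$; hence $(\partial M)^A=\emptyset$, so $M^A$ is a closed submanifold contained in the interior of $M$ and $G/A$ acts freely on it. By \cite[\S XXVI, Prop. 3.2]{May-book}, recalled in Section \ref{section bordism}, $M^A$ carries a tangentially stably almost complex $G/A$-structure and the normal bundle $\nu(M^A,M)\cong\xi|_{M^A}/\xi^A$ is a $G$-equivariant complex vector bundle over $M^A$ on which $A$ acts trivially on the base and which has no nonzero $A$-fixed vector in any fibre. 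Decomposing $M^A=\bigsqcup_k M^A_k$ according to the complex rank $k$ of $\nu$ on each component, the relation $\dim M=\dim M^A_k+2k$ gives $\dim M^A_k=n-2k$, and a free $G/A$-action on a nonempty $M^A_k$ forces $n-2k\ge\dim(G/A)$, so only indices with $0\le 2k\le n-\dim(G/A)$ contribute.

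To identify the target, observe that a $G$-equivariant complex rank-$k$ vector bundle over an $A$-trivial $G$-space with no nonzero $A$-fixed vector in any fibre is classified by a $G/A$-equivariant map into the sublocus of $B_GU(k)^A$ on which the multiplicity $n_1$ of the trivial $A$-representation vanishes; since $A$ is normal we have $N_A=G$ and $W_A=G/A$, and by Remark \ref{remark distribution} this sublocus is exactly $\bigsqcup_{\overline{P}\in\overline{\calp}(k,A)}B_{G/A}U(\overline{P})$. Moreover $f|_{M^A_k}$ takes values in $X^A$ and is $G/A$-equivariant, because $f$ is $G$-equivariant and $A$ acts trivially on $M^A_k$. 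Hence $M^A_k$, with its $G/A$-structure and with the $G/A$-map whose first coordinate is $f|_{M^A_k}$ and whose second coordinate is a classifying map for $\nu$, represents a class in $\Omega_{n-2k}^{G/A}\{\{1\}\}\bigl(X^A\times\bigsqcup_{\overline{P}\in\overline{\calp}(k,A)}B_{G/A}U(\overline{P})\bigr)$; sending $[M,\partial M,f]$ to the sum over $k$ of these classes defines the homomorphism from left to right.

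For the inverse, start from a closed free $G/A$-manifold $N$ of dimension $n-2k$ together with a $G/A$-map $g\colon N\to X^A\times\bigsqcup_{\overline{P}\in\overline{\calp}(k,A)}B_{G/A}U(\overline{P})$; its second coordinate classifies a $G$-equivariant complex rank-$k$ bundle $\nu\to N$ on which $A$ acts trivially on the base and which has no nonzero $A$-fixed vector in any fibre. Form the disk bundle $D(\nu)$, a compact $G$-manifold of dimension $n$ with $\partial D(\nu)=S(\nu)$; since $TD(\nu)\cong\pi^*(TN)\oplus\pi^*(\nu)$, with $N$ stably almost complex and $\nu$ complex, $D(\nu)$ inherits a tangentially stably almost complex $G$-structure. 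On the zero section the isotropy group is exactly $A\in\calf$, while at a point $(x,v)$ with $v\ne 0$ freeness of the $G/A$-action forces the isotropy into $A$ and the absence of nonzero $A$-fixed vectors forces it to be a proper subgroup of $A$, hence a member of $\calf'$; therefore $D(\nu)$ is $\calf$-free and $S(\nu)$ is $\calf'$-free. Composing $D(\nu)\to N\to X^A\hookrightarrow X$ produces a class in $\Omega_n^G\{\calf,\calf'\}(X)$, which is the proposed inverse.

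It remains to check that both constructions pass to bordism classes and are mutually inverse. For the forward construction one takes $A$-fixed points of a $(\calf,\calf')$-bordism $(V,V^+)$: since $A\notin\calf'$ one has $(V^+)^A=\emptyset$, so $V^A$ is a free $G/A$-bordism and $\nu(V^A,V)$ provides the extended decoration. For the inverse one applies the disk-bundle construction to a free $G/A$-bordism together with the complex bundle classified over it. The composite (inverse)$\circ$(forward) equals the identity by Lemma \ref{lemma equivalent bordism classes}, applied with $W$ a closed $G$-invariant tubular neighbourhood of $M^A$ in $M$, for which $W\cong D(\nu(M^A,M))$ and all isotropies outside $W$ lie in $\calf'$; the composite (forward)$\circ$(inverse) equals the identity because $D(\nu)^A$ is the zero section and its normal bundle is $\nu$. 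The main obstacle is exactly this bookkeeping: ensuring that the $G/A$-equivariant classifying maps, the tangentially stably almost complex structures, and the rank/partition decorations all restrict compatibly across bordisms in both directions. The one substantial ingredient beyond this (routine but lengthy) verification is the identification of the classifying space above, which is where Theorem \ref{theorem decomposition of B_GU(n)}, equivalently Theorem \ref{theorem decomposition vector bundle}, enters.
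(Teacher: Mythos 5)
Your proof takes essentially the same route as the paper: decompose $M$ near $M^A$ using equivariant tubular neighbourhoods and Lemma \ref{lemma equivalent bordism classes}, identify the normal bundle's classifying space via Theorem \ref{theorem decomposition of B_GU(n)} and Remark \ref{remark distribution}, and reconstruct via disk bundles for the inverse direction. The only presentational difference is that you group the $A$-fixed components by normal rank $k$ whereas the paper runs over connected components, and you spell out both composites as identities where the paper proves surjectivity and leaves injectivity as a similar argument; neither change is a different mathematical idea.
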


\begin{proof} We are going to define an isomorphism $\Phi$ between these groups. 
Suppose that  $f:M \to X$ is a $(\calf, \calf')$-free geometric unitary bordism of $X$ 
so that $[M, \partial M, f]\in \Omega_n^G\{\calf,\calf' \}(X)$. 
Let $M^A= M^A_1 \cup \cdots \cup M^A_l$ be a decomposition on disjoint manifolds, 
where each $M^A_j$ is an  $n_1(j)$-dimensional manifold which is moreover connected. 
By the $G$-equivariant tubular neighborhood, 
we may find pairwise disjoint tubular neighborhoods $U_j$ of $M_j^A$
in $M$ which are diffeomorphic to the $G$-manifolds $D(\nu_j)$ through the 
diffeomorphisms $\phi_j: U_j \stackrel{\cong}{\to} D(\nu_j)$,
where $D(\nu_j)$ denotes the unit disk  bundle
of the $G$-equivariant normal bundle $\nu_j \to M^A_j$ of the inclusion $M^A_j \subset M$ 
for $j=1,\dots, l$. By Lemma \ref{lemma equivalent bordism classes} we know that
$$[M, \partial M, f] = \sum_{j=1}^l[U_j, \partial U_j, f|_{U_j}] = 
\sum_{j=1}^l[D(\nu_j), S(\nu_j), f|_{U_j} \circ \phi_j^{-1}] $$
in $\Omega_n^G\{\calf,\calf' \}(X)$.
The bundle  $\nu_j\to M^A_j$ is a complex $G$-equivariant 
bundle with the  property that the trivial $A$-representation does not appear on the fibers. 
Let $r_{j}={\rm{rank}}_\IC ( \nu_j)$ so that $n=n_{1}^j+2r_{j}$ for each $j=1,\dots, l$ with $n_1^j$
the dimension of $M_j^A$.
By Theorem \ref{theorem decomposition of B_GU(n)} we know that
the bundle $\nu_j$ is classified by a $G/A$-equivariant map
$$\kappa_j : M^A_j \to \bigsqcup_{\overline{P} \in \overline{\calp}(r_{j},A)}B_{G/A}U(\overline{P}).$$
Now set $f_j:M^A_j \to X^A$ to be the restriction of $f$ to $M^A_j$,  
define the product map
$$(f_j, \kappa_j) : M^A_j \to X^A \times 
\bigsqcup_{\overline{P} \in \overline{\calp}(r_{j},A)}B_{G/A}U(\overline{P}).$$
Notice that $0\le {\rm{dim}}(G/A) \le n_{1}^{j}$  and thus 
the class $[M^A_j, \emptyset, (f_j, \kappa_j)]$ defines an element in 
$\Omega_{n-2r_{j}}^{G/A}\{\{1\}\}(X^A \times \bigsqcup_{\overline{P} \in 
\overline{\calp}(r_{j},A)}B_{G/A}U(\overline{P}))$ for $j=1,\dots, l$. 
We define 
$$ \Phi([M, \partial M, f]):= \sum_{j=1 }^l [M^A_j, \emptyset, (f_j, \kappa_j)].$$
We claim that map $\Phi$ is an isomorphism. To see that  $\Phi$  is surjective, suppose that 
$$[Y,\emptyset, \varphi: Y \to X^A \times \bigsqcup_{\overline{P} \in 
\overline{\calp}(k,A)}B_{G/A}U(\overline{P})]\in  
\Omega_{n-2k}^{G/A}\{\{1\}\}(X^A \times \bigsqcup_{\overline{P} \in 
\overline{\calp}(k,A)}B_{G/A}U(\overline{P})).$$ 
Let $p:E\to Y$ be the $G$-equivariant complex vector bundle defined by the map 
$\pi_2 \circ \varphi : Y \to \bigsqcup_{\overline{P} \in 
\overline{\calp}(k,A)}B_{G/A}U(\overline{P})$ as it is shown in 
Theorem \ref{theorem decomposition of B_GU(n)}. Since $Y$ is a closed manifold with a 
free $G/A$-action and with a tangentially stably almost complex $G/A$-structure,
then the closed unit disk of the bundle $D(E)$ is an $n$-dimensional manifold endowed 
with a tangentially stably complex $G$-structure. Moreover, the boundary $S(E)$ of $D(E)$ is 
$\calf'$-free since the trivial $A$-representation does not appear on the fibers of $E$. 
Denoting by $\psi: D(E) \to X$ the composition of the maps
$$D(E) \to Y \to X^A \hookrightarrow X,$$
where the first is the projection on the base, the second is $\pi_1 \circ \varphi$ 
and the third is the inclusion, we see that the bordism class
$[ D(E),S(E), {\psi}: D(E) \to X]$ lives in $\Omega_n^G\{\calf,\calf' \}(X)$ 
and by construction 
\[
\Phi([ D(E),S(E), {\psi}: D(E) \to X])=[Y,\emptyset, \varphi].
\]
This proves the surjectivity of $\Phi$. The injectivity of $\Phi$ can be proved 
in a similar way  as in the case of a compact abelian Lie group using 
Theorem \ref{theorem decomposition of B_GU(n)}.
\end{proof}

To be able to extend the previous theorem to general subgroups that are not necessarily normal, 
we consider $G=N_A$ and extend $N_A$-bordisms to $G$-bordisms with the change of groups formula \cite[\S XXVI, Lem. 3.4]{May-book}:
For any subgroup $H$  of a finite group $G$, we have an isomorphism
\begin{align*}
\Omega_*^H(X) & \stackrel{\cong}{\to} \Omega_*^G(G\times_HX)\\
[M, \partial M, f: M \to X]&  \mapsto 
[G \times_H M, \partial  (G\times_ H M) , G\times_H f : G \times_H M \to G \times_HX ].
\end{align*}

\begin{corollary} \label{cor bordism for adjacent families finite group}
If $(\calf,\calf')$ is an adjacent pair of families of subgroups of the finite group 
$G$ differing by the subgroup $A$,
then
$$\Omega_n^G\{\calf,\calf' \}(X) \cong \bigoplus_{0 \leq 2k \leq n }
\Omega_{n -2k}^{W_A}\{\{1\}\}(X^A \times \bigsqcup_{\overline{P} \in 
\overline{\calp}(k,A)}B_{W_A}U(\overline{P}))$$
where $\{1\}$ is the family of subgroups of $W_A$ which only contains the trivial group. 
\end{corollary}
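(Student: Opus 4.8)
The plan is to deduce this from Theorem~\ref{thm bordism for adjacent families} by passing to the normalizer $N_A$, inside which $A$ is a normal subgroup. Write $\mathcal{A}$ for the family of subgroups of $N_A$ contained in $A$ and $\mathcal{A}'$ for the family of subgroups of $N_A$ properly contained in $A$, so that $(\mathcal{A},\mathcal{A}')$ is an adjacent pair of families of subgroups of $N_A$ differing by the normal subgroup $A$. There are two ingredients. The first is a reduction isomorphism
\[
\Omega_n^G\{\calf,\calf'\}(X)\;\cong\;\Omega_n^{N_A}\{\mathcal{A},\mathcal{A}'\}(X),
\]
where on the right $X$ is regarded as an $N_A$-space by restriction. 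The second is Theorem~\ref{thm bordism for adjacent families} applied to the short exact sequence $1\to A\to N_A\to W_A\to 1$, which yields
\[
\Omega_n^{N_A}\{\mathcal{A},\mathcal{A}'\}(X)\cong\bigoplus_{0\le 2k\le n-\dim(N_A/A)}\Omega_{n-2k}^{N_A/A}\{\{1\}\}\Bigl(X^A\times\bigsqcup_{\overline{P}\in\overline{\calp}(k,A)}B_{N_A/A}U(\overline{P})\Bigr).
\]
Since $G$ is finite we have $\dim(N_A/A)=0$ and $N_A/A=W_A$, so the right-hand side is exactly the sum in the statement, and the corollary follows.

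For the reduction isomorphism I would argue geometrically, in the spirit of Conner--Floyd \cite[\S 5]{ConnerFloyd-Odd}, tom Dieck, and the proof of Theorem~\ref{thm bordism for adjacent families}. Given a $(\calf,\calf')$-free bordism element $(M,\partial M,f)$, let $M[A]\subset M$ be the $(A)$-orbit type, that is, the set of points whose isotropy is conjugate in $G$ to $A$; since $\partial M$ is $\calf'$-free and $A\notin\calf'$ this is a $G$-invariant submanifold of the interior of $M$, and the evaluation map $G\times_{N_A}(M[A])^A\to M[A]$ is a $G$-diffeomorphism, where $(M[A])^A$ is the set of points of $M^A$ with isotropy exactly $A$, a closed $N_A$-manifold on which $W_A$ acts freely. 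A $G$-invariant tubular neighborhood of $M[A]$ then has the form $G\times_{N_A}D(\nu)$ with $\nu\to(M[A])^A$ the $N_A$-equivariant normal bundle, whose fibers carry no trivial $A$-summand; hence $D(\nu)$ is $\mathcal{A}$-free and $S(\nu)$ is $\mathcal{A}'$-free. By Lemma~\ref{lemma equivalent bordism classes} we get $[M,\partial M,f]=[G\times_{N_A}D(\nu),\,G\times_{N_A}S(\nu),\,f|]$, and the change of groups formula \cite[\S XXVI, Lem. 3.4]{May-book} (together with the $G$-map $G\times_{N_A}X\to X$) identifies this with the class $[D(\nu),S(\nu),f|]\in\Omega_n^{N_A}\{\mathcal{A},\mathcal{A}'\}(X)$. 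Running the same tubular-neighborhood construction on bordisms shows that this assignment is well defined, additive and injective, while the disk-bundle construction used in the proof of Theorem~\ref{thm bordism for adjacent families} produces a preimage for any $N_A$-class, giving surjectivity.

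The hard part is precisely this reduction isomorphism: one must check that the $(A)$-orbit type stratum and a suitable tubular neighborhood of it are genuinely induced up from $N_A$, keep track of the equivariant normal-bundle data that forces the boundary isotropies strictly below $A$, and verify that induction is inverse to this restriction at the level of bordism classes (compatibility with disjoint union, with the bordism relation, and with the reference maps to $X$). All of this is classical for geometric unitary bordism with families, see \cite[\S 5]{ConnerFloyd-Odd} and \cite[\S 2]{Stong-complex}, so, as with Theorem~\ref{thm bordism for adjacent families}, I would present the reduction in outline and refer to those sources for the routine verifications; once it is in place the corollary is immediate, the only remaining point being the vanishing $\dim(N_A/A)=0$ for finite $G$.
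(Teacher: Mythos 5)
Your proposal is correct and follows essentially the same route as the paper: reduce from $G$ to $N_A$ by choosing an $N_A$-equivariant tubular neighborhood $U$ of $M^A$ (using that $M^A\cap M^{gAg^{-1}}=\emptyset$ for $g\notin N_A$, so $G\times_{N_A}U\to G\cdot U$ is a $G$-diffeomorphism), apply the change-of-groups isomorphism, and then invoke Theorem~\ref{thm bordism for adjacent families} for $A\trianglelefteq N_A$ with $\dim(N_A/A)=0$. The only cosmetic difference is that you phrase the target of the reduction in terms of the families of subgroups of $A$ and of proper subgroups of $A$, whereas the paper writes $\{\calf|_{N_A},\calf'|_{N_A}\}$; both describe the same bordism group of the tubular neighborhoods.
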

\begin{proof} We just need to note that $M^A\cap M^{gAg^{-1}} = \emptyset$ whenever $g$ 
does not belong to $N_A$.
Therefore we can choose a $N_A$-equivariant tubular neighborhood $U$ of $M^A$ in $M$ such that
its $G$-orbit $G\cdot U$ is a $G$-equivariant tubular neighborhood of $G \cdot M^A$ and such that
$$G \times_{N_A} U \to G \cdot U, \ \ \ [(g,u)] \mapsto gu$$
is a $G$-equivariant diffeomorphism. Hence we have an isomorphism
\begin{align*}
\Omega^G_*\{\calf,\calf' \}(X) & 
\stackrel{\cong}{} \Omega^{N_A}_*\{\calf|_{N_A},\calf'|_{N_A} \}(X)\\
[M, \partial M , f ; M \to X]  & \mapsto [U, \partial U , f|_U : U \to X]
\end{align*}
which composed with the isomorphism of Theorem \ref{thm bordism for adjacent families} for 
the group $N_A$ and its normal subgroup $A$ provides the desired result.
\end{proof}

Let us use  the decomposition formula given in Corollary
\ref{cor bordism for adjacent families finite group}
 of the equivariant bordism groups for adjacent families in the case of finite groups to give an 
 alternative proof of Theorem 1.1 in \cite{Rowlett}. 
Let $(\calf,\calf')$ be an adjacent pair of families of subgroups of the finite group $G$ 
differing by the subgroup $A$, and consider the restriction map from the $G$-bordisms to 
$A$-bordisms. Let $\cala$ denote the family of all subgroups of $A$ and let $\calp$ denote 
the family of all subgroups of $A$ besides $A$ itself. The restriction
of $G$-manifolds to $A$-manifolds gives a homomorphism
$$r_A^G:\Omega_n^G\{\calf,\calf' \} \to\Omega_n^A\{\cala,\calp \}^{W_A} $$
which lies in the $W_A$-invariants since the action of the inner automorphisms of $G$ in 
$\Omega_n^G\{\calf,\calf' \}$ is trivial (see \cite[\S 20]{ConnerFloyd-book})
and the restriction map is $N_A$-equivariant.

Applying the fixed point construction done in Theorem \ref{thm bordism for adjacent families}
to both sides of the homomorphism above we obtain the following diagram with 
horizontal isomorphisms
$$\xymatrix{
\Omega_n^G\{\calf,\calf' \} \ar[r]^-{\cong} \ar[d]^{r_A^G} &
\bigoplus\limits_{0 \leq 2k \leq n } \Omega_{n -2k}^{W_A}\{\{1\}\}
\left(\bigsqcup\limits_{\overline{P} \in \overline{\calp}(k,A)}B_{W_A}U(\overline{P})\right) 
\ar[d]^{r^{W_A}_{\{1\}}}\\ 
\Omega_n^A\{\cala,\calp \}^{W_A} \ar[r]^-{\cong} & 
\bigoplus\limits_{0 \leq 2k \leq n  } 
\Omega_{n -2k}\left(\bigsqcup\limits_{\overline{P}' \in 
\overline{\calp}(k,A)}BU(\overline{P}')\right)^{W_A}.
}$$ 
Note that $B_{W_A}U(\overline{P})$ is a model for $BU(\overline{P})$ and therefore we may take
$BU(\overline{P}):=B_{W_A}U(\overline{P})$.

If we tensor with the ring $Z_P$ of $P$-local integers, where $P$ is the collection of primes
which do not divide the order of the group, the right vertical map 
$$\Omega_{n -2k}^{W_A}\{\{1\}\}\left( \bigsqcup_{\overline{P} \in 
\overline{\calp}(k,A)}B_{W_A}U(\overline{P})\right) \otimes Z_P \cong
\Omega_{n -2k}\left(\bigsqcup_{\overline{P} \in 
\overline{\calp}(k,A)}B_{W_A}U(\overline{P})\right)^{W_A} \otimes Z_P.$$
induces an isomorphism. Therefore the restriction map
$$r_A^G:\Omega_n^G\{\calf,\calf' \} \otimes Z_P \to\Omega_n^A\{\cala,\calp \}^{W_A} \otimes Z_P $$
becomes an isomorphism (cf. \cite[Prop. 3.1]{Rowlett}). The spaces $B_{W_A}U(\overline{P})$ 
are products of $BU(j)$'s and therefore the bordism groups $\Omega_*(B_{W_A}U(\overline{P}))$
are zero in odd degrees and $\Omega_*$-free in even degrees. Since 
$$\Omega_*({}^{(\widetilde{W_A})_\rho}B_{(W_A)_\rho}U(n_\rho) )$$
is $(W_A)_\rho$-invariant, then the action of $W_A$ on 
$$\bigoplus_{\overline{P} \in \overline{\calp}(k,A)} \Omega_{n-2k}(B_{W_A}U(\overline{P}))$$
permutes the generators and therefore the $W_A$ invariants are also $\Omega_*$-free. Hence we 
conclude that the bordism groups $\Omega_n^G\{\calf,\calf' \} \otimes Z_P$ for adjacent families 
are $\Omega_* \otimes Z_P$-free in even degrees and zero in odd degrees. Therefore the short exact
sequences $$0 \to \Omega^G_*\{\calf'  \} \otimes Z_P\to \Omega^G_*\{\calf  \}\otimes Z_P  
\to \Omega^G_*\{\calf, \calf'  \} \otimes Z_P \to 0$$ are all split for all pair of families of subgroups of $G$, $\Omega_*^G \otimes Z_P$ is
a $\Omega_* \otimes Z_P$-free module and there is a canonical isomorphism
$$\Omega^G_* \otimes Z_P \cong \bigoplus_{(A)} \Omega_*^A\{\cala,\calp \}^{W_A} \otimes Z_P$$
where $(A)$ runs over the set of conjugacy classes of subgroups of $G$ (cf. \cite[Thm. 1.1]{Rowlett}).

\section{Applications} \label{applications}
In this section we use Corollary \ref{cor bordism for adjacent families finite group} to calculate 
the $\Omega_*$-module structure of the equivariant unitary bordism groups of the dihedral groups of 
order $2p$, where $p$ is an odd prime number. 
 
Let $D_{2p}= \langle a,b | a^p=b^2=1, bab=a^{-1} \rangle $ denote the dihedral group 
of order $2p$. Notice that $\left<a\right>\cong \IZ/p$ is a normal 
subgroup of $D_{2p}$ and we have an extension of groups 
\[
1\to \IZ/p\stackrel{i}{\rightarrow} D_{2p}\stackrel{\pi}{\rightarrow}  \IZ/2\to 1.
\] 
If we write $\IZ/2=\{1,\tau\}$, where $\tau^{2}=1$, then the map $j(\tau)=b$ defines a splitting 
of the previous short exact sequence and thus $D_{2p}\cong \IZ/{p}\rtimes \IZ/2$. 

Let us recall what we know about the unitary bordism groups of free $D_{2p}$-actions.
Denote by $S^{2i-1}_-$ the sphere with the antipodal $\IZ/2$-action, and denote by 
$S^{2k-1}_{\lambda^l}$ the spheres with the action of $\IZ/p$ given by 
multiplying by $\lambda=e^{\frac{2 \pi l}{p}}$. 
By \cite[Cor. 2.5]{Kamata2} the bordism classes of the
$D_{2p}$-free unitary manifolds defined by the balanced products
$D_{2p} \times _{\IZ/2} S^{2i-1}_-$ and $D_{2p} \times _{\IZ/p} S^{4k-1}_{\lambda^l}$
with $\IZ/2 \cong \langle b \rangle $ and $\IZ/p \cong \langle a \rangle $
form a generating set of $\widetilde{\Omega}_*^{D_{2p}}\{\{1\}\}$ as a $\Omega_*$-module.
In \cite[Thm. 2.6]{Kamata2} it is shown that the map
\begin{align} \label{decomposition free D2p actions}
i_* \oplus j_*:(\widetilde{\Omega}_*^{\IZ/p}\{\{1\}\})^{\IZ/2} 
\oplus \widetilde{\Omega}_*^{\IZ/2}\{\{1\}\} \stackrel{\cong}{\to} 
\widetilde{\Omega}_*^{D_{2p}}\{\{1\}\}
\end{align}
induced by the balanced products is an isomorphism of $\Omega_*$-modules, where 
$$i_*\left (\frac{1}{2} \left ([S^{4k-1}_{\lambda^l}]+[S^{4k-1}_{\lambda^{p-l}}]\right )\right ) = 
[D_{2p} \times _{\IZ/p} S^{4k-1}_{\lambda^l}]  
\ \ \ \mbox{and} \ \ \ j_*[S^{2i-1}_-] = [D_{2p} \times _{\IZ/2} S^{2i-1}_-].$$
Here $\widetilde{\Omega}_*^G\{\{1\}\} := \widetilde{\Omega}_*(BG)$ denotes the reduced bordism 
groups of $BG$, i.e.   $\widetilde{\Omega}_*(BG)$ is the kernel of the augmentation map
$\Omega_*(BG) \to \Omega_*$.

Using \cite[Theorem 3]{Landweber-complex} we see that since  $H^{n}(BD_{2p};\IZ)=0$ for all $n\ge 1$ odd, then the projective dimension of  $\Omega_{*}(BD_{2p})$ over $\Omega_{*}$ is at most $1$. Since $\Omega_{*}(BD_{2p})$ 
contains torsion elements we conclude that it is not a $\Omega_{*}$ projective module and thus it has projective dimension $1$  over  $\Omega_{*}$.

For what follows we will use the notation 
$\Omega^{G}_{+}\{\calf\}(X):=\bigoplus_{n \text{ even}}\Omega^{G}_{n}\{\calf\}(X)$ and 
$\Omega^{G}_{-}\{\calf\}(X):=\bigoplus_{n \text{ odd}}\Omega^{G}_{n}\{\calf\}(X)$ for 
any family $\calf$ of subgroups of a  finite group $G$. Therefore 
$\Omega^{G}_{*}\{\calf\}(X)=\Omega^{G}_{+}\{\calf\}(X)\oplus \Omega^{G}_{-}\{\calf\}(X)$.  
Similarly for pairs of families. With this notation we have that 
$\Omega_+^{D_{2p}}\{\{1\}\} \cong \Omega_+$  and $\Omega_-^{D_{2p}}\{\{1\}\}$ is all torsion. 
Moreover, we can identify $\Omega_{*}$ with $\Omega_{+}$ as $\Omega_{-}=0$.

The following theorem was originally proved in \cite{Lazarov}
and we offer here a simpler proof which makes use of the results of the previous sections.

\begin{theorem} \label{thm D2p}
The unitary bordism group $\Omega_*^{D_{2p}}$ is a free $\Omega_*$-module on even dimensional 
generators.
\end{theorem}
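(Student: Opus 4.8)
The plan is to filter $\Omega^{D_{2p}}_*$ by isotropy and to identify each layer via Corollary~\ref{cor bordism for adjacent families finite group}. Since $p$ is an odd prime, the conjugacy classes of subgroups of $D_{2p}=\langle a,b\rangle$ are those of $\{1\}$, of the reflection group $\langle b\rangle\cong\IZ/2$, of the normal rotation group $\langle a\rangle\cong\IZ/p$, and of $D_{2p}$ itself, so I would set
$$
\calf_0=\{(\{1\})\}\ \subsetneq\ \calf_1=\calf_0\cup(\langle b\rangle)\ \subsetneq\ \calf_2=\calf_1\cup\{\langle a\rangle\}\ \subsetneq\ \calf_3=\{\text{all subgroups of }D_{2p}\},
$$
with each consecutive pair adjacent, $\Omega^{D_{2p}}_*\{\calf_3\}=\Omega^{D_{2p}}_*$, and $\Omega^{D_{2p}}_*\{\calf_0\}=\Omega_*\oplus\widetilde\Omega_*(BD_{2p})$. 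Running the three long exact sequences of the pairs $(\calf_i,\calf_{i-1})$ together, the goal is to force $\Omega^{D_{2p}}_*$ to be torsion free and concentrated in even degrees, and then deduce freeness.

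\textbf{The three layers.} First I would compute the layers with Corollary~\ref{cor bordism for adjacent families finite group} for $X$ a point. For $(\calf_1,\calf_0)$ the distinguishing subgroup is $A=\langle b\rangle$, which is self-normalizing in $D_{2p}$ (because $p$ is odd), so $W_A=\{1\}$ and $\Irr(A)\smallsetminus\{1\}$ is just the sign character; thus $\Omega^{D_{2p}}_n\{\calf_1,\calf_0\}\cong\bigoplus_{0\le 2k\le n}\Omega_{n-2k}(BU(k))$, a free $\Omega_*$-module on even-degree generators. For $(\calf_3,\calf_2)$ the distinguishing subgroup is $A=D_{2p}$, normal with $D_{2p}/A=\{1\}$, and Theorem~\ref{thm bordism for adjacent families} gives $\Omega^{D_{2p}}_n\{\calf_3,\calf_2\}\cong\bigoplus_{0\le 2k\le n}\Omega_{n-2k}\bigl(\bigsqcup_{\overline P\in\overline\calp(k,D_{2p})}\textstyle\prod_{\rho}BU(n_\rho)\bigr)$ (the product over nontrivial irreducibles of $D_{2p}$), again free on even-degree generators. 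For $(\calf_2,\calf_1)$ the distinguishing subgroup is $A=\langle a\rangle\cong\IZ/p$, normal with $D_{2p}/A\cong\IZ/2$ acting on $\Irr(\IZ/p)\smallsetminus\{1\}$ freely by $\rho\mapsto\rho^{-1}$, so all the extensions $(\widetilde{\IZ/2})_\rho$ are trivial and $(\IZ/2)_\rho=\{1\}$, and Theorem~\ref{thm bordism for adjacent families} gives $\Omega^{D_{2p}}_n\{\calf_2,\calf_1\}\cong\bigoplus_{0\le 2k\le n}\Omega^{\IZ/2}_{n-2k}\{\{1\}\}\bigl(\bigsqcup_{\overline P\in\overline\calp(k,\IZ/p)}B_{\IZ/2}U(\overline P)\bigr)$, where $B_{\IZ/2}U(\overline P)=\prod_{\rho\neq 1}BU(n_\rho)$ with $\IZ/2$ permuting the factors through $\rho\mapsto\rho^{-1}$. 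Writing $\Omega^{\IZ/2}_*\{\{1\}\}(Y)\cong\Omega_*(E\IZ/2\times_{\IZ/2}Y)$ and splitting the $\IZ/2$-set of components into free orbits (which contribute the free even modules $\Omega_*(\prod BU(n_\rho))$) and symmetric arrays (which contribute $\Omega_*\bigl(E\IZ/2\times_{\IZ/2}(W\times W)\bigr)$ for $W$ a product of $BU(j)$'s with the flip action), one sees that $(\calf_2,\calf_1)$ is the only layer carrying torsion, and only $2$-primary torsion, coming from the $B\IZ/2$ in those Borel constructions.

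\textbf{Assembly.} By Kamata's decomposition \eqref{decomposition free D2p actions}, $\widetilde\Omega_*(BD_{2p})\cong(\widetilde\Omega_*(B\IZ/p))^{\IZ/2}\oplus\widetilde\Omega_*(B\IZ/2)$, all in odd degrees, the first summand $p$-primary, the second $2$-primary. Each generator $[D_{2p}\times_{\IZ/2}S^{2i-1}_-]$ of the $2$-primary summand bounds the disk bundle $D_{2p}\times_{\IZ/2}D^{2i}_-$, whose isotropy groups all lie in $\calf_1$, so that summand lies in the image of the connecting map $\Omega^{D_{2p}}_*\{\calf_1,\calf_0\}\to\Omega^{D_{2p}}_*\{\calf_0\}$ and dies in $\Omega^{D_{2p}}_*\{\calf_1\}$. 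The $p$-primary summand survives to $\calf_1$ — an $\calf_1$-isotropy nullbordism would restrict over $\langle a\rangle$ to a free $\IZ/p$-nullbordism of $[S^{4k-1}_{\lambda^l}]+[S^{4k-1}_{\lambda^{p-l}}]\neq 0$ in $\Omega_*(B\IZ/p)$, which is impossible — but is killed on passing to $\calf_2$ by the connecting map out of $\Omega^{D_{2p}}_*\{\calf_2,\calf_1\}$, whose ``$\langle a\rangle$-fixed-point'' classes are precisely the disk bundles $D(E)\to Y$ over free $\IZ/2$-manifolds built in the proof of Theorem~\ref{thm bordism for adjacent families}; finally the residual $2$-primary torsion of $\Omega^{D_{2p}}_*\{\calf_2,\calf_1\}$ is absorbed on passing to $\calf_3$ through the free even layer $\Omega^{D_{2p}}_*\{\calf_3,\calf_2\}$. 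Once this bookkeeping has made $\Omega^{D_{2p}}_*$ torsion free and concentrated in even degrees, freeness follows by a further analysis of the same three exact sequences together with the known freeness of $\Omega^{\IZ/p}_*$ and $\Omega^{\IZ/2}_*$, which then exhibits $\Omega^{D_{2p}}_*$ as assembled from the free even layers.

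\textbf{Main obstacle.} The hard part is exactly this last step: one must control the connecting homomorphisms precisely enough to check that every torsion class is hit — the $p$-primary part of $\widetilde\Omega_*(BD_{2p})$ only two steps up, through the $\langle a\rangle$-fixed-point layer, and the successive $2$-primary contributions of the middle layer's Borel constructions — and to see that the resulting even, torsion-free module is actually free rather than merely of projective dimension $\le 1$; here Kamata's explicit generators of free $D_{2p}$-bordism and the geometric bounding constructions of Theorem~\ref{thm bordism for adjacent families} are doing the real work. One simplification worth installing at the outset is that inverting $2$ collapses the $p$-primary half: $(\Omega^{\IZ/p}_*\otimes\IZ[1/2])^{\IZ/2}$ is a direct summand of the (known) free module $\Omega^{\IZ/p}_*\otimes\IZ[1/2]$ and splits off $\Omega^{D_{2p}}_*\otimes\IZ[1/2]$ as a free even summand, so in fact there is no $p$-primary torsion to cancel and all the delicate analysis is $2$-primary and may be carried out $2$-locally.
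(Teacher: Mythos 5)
Your filtration is the right framework and you have correctly computed the three layers via Corollary~\ref{cor bordism for adjacent families finite group}; you also correctly identify that the only torsion-carrying layer is the one with distinguishing subgroup $\langle a\rangle$, and the qualitative picture you sketch of which torsion dies where is broadly accurate. However, the proof as written has a real gap in the ``assembly'' step, which you acknowledge but do not close. The paper closes it with two specific tools that are absent from your sketch: first, an a priori \emph{projective dimension} bound (via Landweber) showing that $\Omega_*(BD_{2p})$ and the odd part of the torsion layer have projective dimension exactly $1$ over $\Omega_*$; second, Schanuel's lemma combined with the fact that finite-type projective $\Omega_*$-modules are free (Conner--Smith), which is what converts ``fits in a short exact sequence with free modules on either side'' into actual freeness. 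Your appeal to ``torsion free and concentrated in even degrees, hence free'' is not a valid inference over $\Omega_*$ -- torsion freeness does not imply projectivity, and projectivity is exactly what has to be supplied. Likewise, your claims that the various torsion summands are ``killed'' or ``absorbed'' by the next layer up require proving that the relevant boundary maps are \emph{surjective}; the paper does this by exhibiting explicit geometric preimages (disk bundles $D(\IC^i_-)\times D(E\times E)$, $D_{2p}\times_{\IZ/2}D(\IC^i_-)$, $D_{2p}\times_{\IZ/p}D(\IC^{2k}_{\lambda^l})$) for explicit $\Omega_*$-module generators. Without the surjectivity lemmas and the Schanuel argument, the proof does not go through.

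A secondary point: your filtration orders the families differently from the paper. You insert the reflection subgroups $(\langle b\rangle)$ first and the normal $\langle a\rangle$ second, putting the torsion layer $\Omega^{D_{2p}}_*\{\calf_2,\calf_1\}$ in the \emph{middle}. The paper inserts $\langle a\rangle$ first, so that the torsion layer $\Omega^{D_{2p}}_*\{\calf_1,\calf_0\}$ sits at the \emph{bottom}, and the two self-normalizing layers stack on top to give the clean free even module $\Omega^{D_{2p}}_*\{\calf_3,\calf_1\}$. That choice is what makes the two applications of Schanuel's lemma possible with a single free module on one side; with your ordering you would have to thread the torsion through two long exact sequences rather than one, which is more awkward even if in principle doable. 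Your observation that inverting $2$ makes the $p$-primary analysis trivial is a genuine simplification, but it only reduces the problem to the $2$-local case; it does not substitute for the projective-dimension and Schanuel arguments there.
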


\begin{proof}
Consider the families  $\calf_0 \subset \calf_1 \subset \calf_2 \subset \calf_3$  of subgroups 
of $D_{2p}$ defined as follows: 
\begin{align*}
\calf_0 : = & \{ \{ 1 \} \},\\
\calf_1 : = & \{ \{ 1 \}, \langle a \rangle  \},\\
\calf_2 : = & \{ \{ 1 \}, \langle a \rangle, \langle b \rangle, 
\langle aba^{-1} \rangle\dots,  \langle a^{p-1}ba^{1-p} \rangle  \}=all\setminus \{D_{2p}\},\\
\calf_3 := & all.
\end{align*}

The proof of the theorem will be based on the following facts that will be proved later:
\begin{itemize}
\item The unitary bordism group $\Omega^{D_{2p}}_{*}\{\calf_3,\calf_1\}$ is a free 
$\Omega_*$-module on even dimensional generators.
\item The unitary bordism group $\Omega^{D_{2p}}_{+}\{\calf_1, \calf_0\}$ is a 
free $\Omega_*$-module.
\item The boundary map $\delta:\Omega^{D_{2p}}_{+}\{\calf_3,\calf_1\} \to 
\Omega^{D_{2p}}_{-}\{\calf_1,\calf_0\}$ is surjective.

\item Both $\Omega^{D_{2p}}_{-}\{\calf_0\}$ and 
$\Omega^{D_{2p}}_{-}\{\calf_1,\calf_0\}$ have projective dimension 1 as  modules 
over $\Omega_{*}$.

\item The boundary map $\partial:\Omega^{D_{2p}}_{+}\{\calf_3,\calf_0\} \to 
\Omega^{D_{2p}}_{-}\{\calf_0\}$ is surjective.
\end{itemize}

Using these facts we can prove the theorem as follows. Since 
$\Omega^{D_{2p}}_{-}\{\calf_3,\calf_1\}$ is trivial, the long exact sequence 
associated to the families $\calf_0 \subset \calf_1 \subset \calf_3$ 
becomes
\begin{align*}
0 \to \Omega^{D_{2p}}_{+}\{\calf_1, \calf_0\} \stackrel{}{\rightarrow} 
\Omega^{D_{2p}}_{+}\{\calf_3,\calf_0\} 
\stackrel{\beta}{\rightarrow} \Omega^{D_{2p}}_{+}\{\calf_3,\calf_1\} 
\stackrel{\delta}{\rightarrow}& \\\Omega^{D_{2p}}_{-}\{\calf_1,\calf_0\} \to & 
\Omega^{D_{2p}}_{-}\{\calf_3,\calf_0\} \to 0.
\end{align*}
Since  $\delta:\Omega^{D_{2p}}_{+}\{\calf_3,\calf_1\} \to 
\Omega^{D_{2p}}_{-}\{\calf_1,\calf_0\}$ is surjective, the previous 
exact sequence yields the short exact sequence  
\[
0\to \text{Im}(\beta)\to \Omega^{D_{2p}}_{+}\{\calf_3,\calf_1\} \stackrel{\delta}{\rightarrow}
\Omega^{D_{2p}}_{-}\{\calf_1,\calf_0\}\to 0.
\]
We know that $\Omega^{D_{2p}}_{-}\{\calf_1,\calf_0\}$ has projective dimension  
1 as a module over $\Omega_{*}$. Since $\Omega^{D_{2p}}_{+}\{\calf_3,\calf_1\}$ 
is a free $\Omega_{*}$-module, we conclude by Schanuel's lemma,that $\text{Im}(\beta)$ must be a projective 
$\Omega_{*}$-module and hence free by \cite[Proposition 3.2]{Conner-Larry}. On the other 
hand, using the long exact sequence given above we obtain the 
short exact sequence 
\[
0 \to \Omega^{D_{2p}}_{+}\{\calf_1, \calf_0\} \stackrel{}{\rightarrow} 
\Omega^{D_{2p}}_{+}\{\calf_3,\calf_0\} 
\stackrel{\beta}{\rightarrow} \text{Im}(\beta)\to 0.
\]
As both $ \Omega^{D_{2p}}_{+}\{\calf_1, \calf_0\}$ and $\text{Im}(\beta)$  
are free modules over $\Omega_{*}$ we conclude that 
$\Omega^{D_{2p}}_{+}\{\calf_3,\calf_0\}$ is a free $\Omega_{*}$-module 
as well. Moreover, since the boundary map 
$\delta:\Omega^{D_{2p}}_{+}\{\calf_3,\calf_1\} \to \Omega^{D_{2p}}_{-}\{\calf_1,\calf_0\}$ 
is surjective, then $\Omega^{D_{2p}}_{-}\{\calf_3,\calf_0\}$ is trivial. 
Hence $\Omega^{D_{2p}}_{*}\{\calf_3,\calf_0\}$ is a free $\Omega_*$-module
on even dimensional generators.

Now, the long exact sequence associated to the families $\calf_0 \subset \calf_3$ becomes
\begin{equation*}
0\to \Omega_+  \stackrel{}{\rightarrow} \Omega_+^{D_{2p}} 
\stackrel{\gamma}{\rightarrow} \Omega_+^{D_{2p}}\{\calf_3, \calf_0\} 
\stackrel{\partial}{\rightarrow}\Omega_-^{D_{2p}}\{\calf_0\} \to \Omega_-^{D_{2p}} \to 0
\end{equation*}
since $\Omega_+^{D_{2p}}\{\calf_0\} \cong \Omega_+$ and $\Omega_-\{\calf_3, \calf_0\}=0$. 
We know that the boundary map $\partial:\Omega_+^{D_{2p}}\{\calf_3, \calf_0\} \to 
\Omega_-^{D_{2p}}\{\calf_0\}$ is surjective and thus we conclude that 
$\Omega_-^{D_{2p}}$ is zero. On the other hand, using the previous long exact sequence 
we obtain the short exact sequence
\[
0\to \text{Im}(\gamma)\to \Omega_+^{D_{2p}}\{\calf_3, \calf_0\} \stackrel{\partial}{\rightarrow}
\Omega_-^{D_{2p}}\{\calf_0\}\to 0.
\]
In this short exact sequence we know that $\Omega_-^{D_{2p}}\{\calf_0\}$ 
has projective dimension 1 as a $\Omega_{*}$-module and that
$\Omega_+^{D_{2p}}\{\calf_3, \calf_0\}$ is a free $\Omega_{*}$-module. By Schanuel's lemma we conclude  that $\text{Im}(\gamma)$ is a projective 
$\Omega_{*}$-module and hence free by \cite[Proposition 3.2]{Conner-Larry}.
Finally using the short exact sequence 
\[
0\to \Omega_+  \stackrel{}{\rightarrow} \Omega_+^{D_{2p}} 
\stackrel{\gamma}{\rightarrow} \text{Im}(\gamma) \to 0
\]
we conclude that $\Omega_+^{D_{2p}}$ is a free $\Omega_{*}$-module 
because $\Omega_+$ and $\text{Im}(\gamma)$ are free as well.  
The theorem follows.\end{proof}

Let us now check each one of the facts listed above.
\begin{lemma}
The unitary bordism group $\Omega^{D_{2p}}_{*}\{\calf_3,\calf_1\}$ is a free 
$\Omega_*$-module on even dimensional generators.
\end{lemma}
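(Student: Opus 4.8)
The plan is to interpolate between the families $\calf_1$ and $\calf_3$ through the intermediate family $\calf_2=all\setminus\{D_{2p}\}$, to compute the two adjacent pieces $\Omega^{D_{2p}}_*\{\calf_2,\calf_1\}$ and $\Omega^{D_{2p}}_*\{\calf_3,\calf_2\}$, and then to glue them with the long exact sequence of the triple $\calf_1\subset\calf_2\subset\calf_3$. First I would verify the adjacencies. The set $\calf_2\setminus\calf_1$ is the collection of the $p$ subgroups of order two $\langle a^jba^{-j}\rangle$, $j=0,\dots,p-1$; since $p$ is odd these form a single conjugacy class, so $\calf_2\setminus\calf_1=(\langle b\rangle)$ and $(\calf_2,\calf_1)$ is adjacent differing by $A=\langle b\rangle\cong\IZ/2$. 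Likewise $\calf_3\setminus\calf_2=\{D_{2p}\}=(D_{2p})$, so $(\calf_3,\calf_2)$ is adjacent differing by the normal subgroup $D_{2p}$. Both pairs are therefore in the range of Corollary \ref{cor bordism for adjacent families finite group}.

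For the pair $(\calf_2,\calf_1)$ I would set $A=\langle b\rangle$. Because $p$ is odd we have $ab\neq ba$, so the centralizer of $b$ is $\langle b\rangle$ itself; hence $N_A=\langle b\rangle$ and $W_A=N_A/A$ is trivial. The group $A\cong\IZ/2$ has a unique non-trivial irreducible representation, the one-dimensional sign representation $\sigma$, so for each $k$ the set $\overline{\calp}(k,A)$ is a single array with $n_\sigma=k$, and $B_{W_A}U(\overline{P})=BU(k)$. Taking $X$ to be a point in Corollary \ref{cor bordism for adjacent families finite group} gives
$$\Omega^{D_{2p}}_*\{\calf_2,\calf_1\}\;\cong\;\bigoplus_{k\geq 0}\Omega_{*-2k}\bigl(BU(k)\bigr).$$
Since $BU(k)$ admits a CW structure with cells only in even dimensions, its integral homology is free abelian and concentrated in even degrees, so the Atiyah--Hirzebruch spectral sequence for unitary bordism collapses and $\Omega_*(BU(k))$ is a free $\Omega_*$-module on even-dimensional generators. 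Consequently $\Omega^{D_{2p}}_*\{\calf_2,\calf_1\}$ is a free $\Omega_*$-module on even-dimensional generators, and in particular it vanishes in odd degrees.

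For the pair $(\calf_3,\calf_2)$ I would set $A=D_{2p}$, so that $N_A=D_{2p}$ and $W_A$ is again trivial. Corollary \ref{cor bordism for adjacent families finite group} (equivalently Theorem \ref{thm bordism for adjacent families}, as $\dim(D_{2p}/D_{2p})=0$) gives, with $X$ a point,
$$\Omega^{D_{2p}}_*\{\calf_3,\calf_2\}\;\cong\;\bigoplus_{k\geq 0}\Omega_{*-2k}\Bigl(\;\bigsqcup_{\overline{P}\in\overline{\calp}(k,D_{2p})}B_{W_A}U(\overline{P})\Bigr),$$
where each $B_{W_A}U(\overline{P})$ is a finite product of spaces $BU(n_\rho)$ indexed by the non-trivial irreducible representations of $D_{2p}$, namely the sign representation together with the $(p-1)/2$ two-dimensional irreducibles. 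A finite product of classifying spaces $BU(n)$, and hence a finite disjoint union of such products, again has free integral homology concentrated in even degrees; the same spectral-sequence argument then shows that $\Omega^{D_{2p}}_*\{\calf_3,\calf_2\}$ is a free $\Omega_*$-module on even-dimensional generators and vanishes in odd degrees.

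To conclude, I would run the long exact sequence of the triple $\calf_1\subset\calf_2\subset\calf_3$. Since $\Omega^{D_{2p}}_*\{\calf_2,\calf_1\}$, $\Omega^{D_{2p}}_*\{\calf_3,\calf_2\}$ and $\Omega_*$ are all concentrated in even degrees, every connecting homomorphism $\Omega^{D_{2p}}_n\{\calf_3,\calf_2\}\to\Omega^{D_{2p}}_{n-1}\{\calf_2,\calf_1\}$ has either source or target zero, hence vanishes, and the long exact sequence breaks into short exact sequences of $\Omega_*$-modules
$$0\to\Omega^{D_{2p}}_*\{\calf_2,\calf_1\}\to\Omega^{D_{2p}}_*\{\calf_3,\calf_1\}\to\Omega^{D_{2p}}_*\{\calf_3,\calf_2\}\to 0.$$
These split because the rightmost term is $\Omega_*$-free, so $\Omega^{D_{2p}}_*\{\calf_3,\calf_1\}$ is a direct sum of two free $\Omega_*$-modules on even-dimensional generators, hence itself free on even-dimensional generators. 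The one point I would check most carefully is the group-theoretic bookkeeping---above all the identification $N_{D_{2p}}(\langle b\rangle)=\langle b\rangle$, which forces the relevant $W_A$ to be trivial so that no $\IS^1$-central extension and no non-trivial twist survive and the universal spaces collapse to ordinary $BU(n)$'s---after which the rest of the argument is formal.
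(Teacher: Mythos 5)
Your proof is correct and follows essentially the same route as the paper: identify the two adjacent pairs $(\calf_2,\calf_1)$ and $(\calf_3,\calf_2)$, note that $\langle b\rangle$ and $D_{2p}$ are self-normalizing so that $W_A$ is trivial and all twists disappear, apply Corollary \ref{cor bordism for adjacent families finite group} to see both relative groups are $\Omega_*$-free on even generators as bordism of disjoint unions of products of $BU(n)$'s, and then splice them via the long exact sequence of the triple $\calf_1\subset\calf_2\subset\calf_3$. You simply spell out the bookkeeping (the normalizer computation, the enumeration of $\Irr(\langle b\rangle)$ and $\Irr(D_{2p})$, the Atiyah--Hirzebruch collapse) that the paper leaves implicit.
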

\begin{proof}
Consider the adjacent pairs of families $(\calf_3,\calf_2)$ and $(\calf_2,\calf_1)$ with $A=D_{2p}$ 
on the first case and $A=\langle b \rangle$ on the second. Since both $D_{2p}$ and 
$\langle b \rangle$ are their own normalizers in $D_{2p}$, then in both cases the group 
$W_A$ is trivial. By Corollary \ref{cor bordism for adjacent families finite group} we know that 
both $\Omega^{D_{2p}}_{*}\{\calf_3,\calf_2\}$ and $\Omega^{D_{2p}}_{*}\{\calf_2,\calf_1\}$
are isomorphic to unitary bordism groups of copies of $BU(k)$'s and therefore free 
$\Omega_*$-modules on even dimensional generators. 
The long exact sequence associated to the families 
$\calf_1 \subset \calf_2 \subset \calf_3$ implies that $\Omega^{D_{2p}}_{-}\{\calf_3,\calf_1\}$ 
is trivial and the short exact sequence 
$$0 \to \Omega^{D_{2p}}_{+}\{\calf_2,\calf_1\} \to \Omega^{D_{2p}}_{+}\{\calf_3,\calf_1\} 
\to \Omega^{D_{2p}}_{+}\{\calf_3,\calf_2\} \to 0$$
implies that the middle term is also a free $\Omega_*$-module.
\end{proof}

\begin{lemma}
The unitary bordism group $\Omega^{D_{2p}}_{+}\{\calf_1, \calf_0\}$ is a free $\Omega_*$-module.
\end{lemma}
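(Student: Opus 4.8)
The plan is to feed the adjacent pair $(\calf_1,\calf_0)$ into Corollary \ref{cor bordism for adjacent families finite group}. This pair differs by the normal subgroup $A=\langle a\rangle\cong\IZ/p$, so $N_A=D_{2p}$ and $W_A=D_{2p}/\langle a\rangle\cong\IZ/2$; taking $X$ to be a point (hence $X^A$ a point) the corollary gives
$$\Omega_n^{D_{2p}}\{\calf_1,\calf_0\}\;\cong\;\bigoplus_{0\le 2k\le n}\Omega_{n-2k}^{\IZ/2}\{\{1\}\}\Big(\,\bigsqcup_{\overline{P}\in\overline{\calp}(k,\IZ/p)}B_{\IZ/2}U(\overline{P})\,\Big).$$
Since $p$ is odd, the generator $\tau=[b]$ of $\IZ/2$ sends $\rho^j\in\Irr(\IZ/p)$ to $\rho^{-j}$ and fixes only the trivial representation, so for every nontrivial $\rho$ the stabilizer $(\IZ/2)_\rho$ is trivial; by Remark \ref{remark distribution} this means $B_{\IZ/2}U(\overline{P})\cong\prod_{j=1}^{p-1}BU(n_{\rho^j})$ with $\IZ/2$ permuting the factors by $\rho^j\leftrightarrow\rho^{-j}$, and $Y_k:=\bigsqcup_{\overline{P}}B_{\IZ/2}U(\overline{P})$ is a $\IZ/2$-space assembled out of finite products of $BU(m)$'s by permuting components and factors. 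Recalling $\Omega_*^{\IZ/2}\{\{1\}\}(Y)=\Omega_*(E\IZ/2\times_{\IZ/2}Y)=\Omega_*((Y)_{h\IZ/2})$, it remains to show $\Omega_+((Y_k)_{h\IZ/2})$ is a free $\Omega_*$-module for each $k$.

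The structural observation that drives the computation is that $\Omega_*(Y_k)$ is a \emph{permutation} $\Omega_*[\IZ/2]$-module: in each degree $\Omega_t(Y_k)$ is a free $\Omega_*$-module on the standard Conner--Floyd monomial basis of $\bigoplus_{\overline{P}}\Omega_*(\prod_j BU(n_{\rho^j}))$, and $\IZ/2$ permutes this basis without signs, because the swap on $\Omega_*(BU(m))\otimes_{\Omega_*}\Omega_*(BU(m))$ merely interchanges tensor factors. It follows that the coinvariants $\Omega_*(Y_k)_{\IZ/2}$ form a free $\Omega_*$-module (on the set of $\IZ/2$-orbits of basis elements), concentrated in even degrees. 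I would then run the $MU$-homology spectral sequence of the fibration $Y_k\to (Y_k)_{h\IZ/2}\to B\IZ/2$, a spectral sequence of $\Omega_*$-modules with $E^2_{s,t}=H_s(\IZ/2;\Omega_t(Y_k))$. Because $\Omega_*(Y_k)$ is a permutation module, $E^2_{s,t}$ vanishes for $s$ even and positive, is annihilated by $2$ for $s$ odd, and equals $\Omega_*(Y_k)_{\IZ/2}$ for $s=0$; moreover $\Omega_{\mathrm{odd}}(Y_k)=0$ since $BU(m)$ has only even cells. A short inspection shows every differential vanishes: one entering or leaving the $s=0$ column is a homomorphism between an $\Omega_*$-torsion module and the torsion-free module $\Omega_*(Y_k)_{\IZ/2}$, hence zero, and between odd columns the target vanishes by parity of $t$. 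Hence in even total degree the spectral sequence is concentrated on the line $s=0$, and the edge homomorphism gives an isomorphism of $\Omega_*$-modules $\Omega_+((Y_k)_{h\IZ/2})\cong\Omega_*(Y_k)_{\IZ/2}$, which is free. Summing over $k$ shows $\Omega^{D_{2p}}_{+}\{\calf_1,\calf_0\}$ is a free $\Omega_*$-module.

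The hard part is precisely this last step: unlike the pairs appearing in the analysis of $\Omega^{D_{2p}}_{*}\{\calf_3,\calf_1\}$, where $W_A$ is trivial, here $W_A=\IZ/2$ acts nontrivially, and one must check that the $2$-torsion which the Borel construction injects into $H_*((Y_k)_{h\IZ/2};\IZ)$ sits entirely in odd degrees — just as it does for $B\IZ/2$ — so that it does not contaminate the even part. The permutation-module structure of $\Omega_*(Y_k)$ is what makes this transparent; an alternative is a transfer argument for the double cover $Y_k\to (Y_k)_{h\IZ/2}$ combined with $\Omega_{\mathrm{odd}}(Y_k)=0$. The components of $Y_k$ on which $\IZ/2$ acts freely only contribute copies of $\Omega_*(\prod_j BU(m_j))$, which are visibly free, so it is the fixed components (where $Y_k$ restricts to a space $Z\times Z$ with the swap action) that require the argument above.
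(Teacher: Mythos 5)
Your proof is correct and follows essentially the same route as the paper: both apply Corollary \ref{cor bordism for adjacent families finite group} to the pair $(\calf_1,\calf_0)$ with $A=\langle a\rangle$, recognize $\Omega_*$ of the resulting $\IZ/2$-space as a permutation $\Omega_*[\IZ/2]$-module concentrated in even degrees via the Conner--Floyd basis of $\Omega_*(BU(k))$, and read off freeness of the even part from the Atiyah--Hirzebruch spectral sequence of the Borel construction with $E^2_{s,t}=H_s(\IZ/2,\Omega_t(-))$. The only cosmetic difference is that you run the spectral sequence for the whole $\IZ/2$-space at once, whereas the paper first splits off the components with free $\IZ/2$-action as a visibly free summand $M_*$ and applies the spectral sequence only to the fixed components $X^2$ with the swap action.
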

\begin{proof}
By Corollary \ref{cor bordism for adjacent families finite group}  
we know that
\begin{equation*}
\Omega_*^{D_{2p}}\{\calf_1, \calf_0\} \cong 
\Omega_{*}^{\IZ/2}\{\{1\}\}
\left(\bigsqcup_{n_1,n_2,...,n_{p-1} \in \IN} 
BU(n_1)\times \cdots \times BU(n_{p-1})\right),
\end{equation*} 
where the number $n_l$ parametrizes the rank of the irreducible representation of $\IZ/p$ given by
multiplication of $e^{\frac{2 \pi l}{p}}$. The action of $\IZ/2$ interchanges the coordinates
\begin{align*}
BU(n_1)\times \cdots \times BU(n_{p-1}) &\to BU(n_{p-1})\times \cdots \times BU(n_1) \\
(x_1,...,x_{p-1}) & \mapsto (x_{p-1},...,x_1)
\end{align*}
and it only has fixed points whenever $n_l=n_{p-l}$ for all $1 \leq l \leq \frac{p-1}{2}$.  Therefore 
\[
\Omega_*^{D_{2p}}\{\calf_1, \calf_0\} \cong M_{*}\oplus N_{*},
\]
where $M_*$ is isomorphic to a direct sum of unitary bordism groups of copies of $BU(k)$'s (thus 
a free $\Omega_*$-module) and
\begin{align*}
N_{*}:=& \bigoplus_{n_1,n_2,...,n_{\frac{p-1}{2}} \in \IN} 
\Omega_{*}\left(E\IZ/2 \times_{\IZ/2} X^{2}\right),
\end{align*}
where $X:=\prod_{l_1}^{l=\frac{p-1}{2}} BU(n_l)$ and $\IZ/2$ acts on $X^{2}$ 
by permutation of the coordinates. Next we study 
$\Omega_*(E\IZ/2 \times_{\IZ/2} X^2)$.  
The second page of the Atiyah-Hirzebruch spectral sequence becomes
$$E^2_{s,t} \cong H_s(\IZ/2, \Omega_t(X^2))$$
and therefore $E^2_{s,odd}=0$ and $E^2_{2k,t}=0$ for $k>0$. Whenever $s=0$
we have that the groups $E^2_{0,*}$ are the $\IZ/2$-coinvariants $\Omega_*(X^2)_{\IZ/2}$. 
By \cite[Prop. 4.3.2 \& 4.3.3]{Kochman}
we know that $\Omega_*(BU(k))$ is a free $\Omega_*$-module  with basis 
$$ \{ \alpha_{j_1}\alpha_{j_2}\cdots \alpha_{j_s} | 1 \leq \alpha_{j_1} \leq \cdots
\alpha_{j_s}, \ s\leq k \}$$
where the degree of $\alpha_{j_1}\cdots \alpha_{j_s}$ is $2(j_1 + \cdots +j_s)$, therefore it follows
that the $\IZ/2$-coinvariants $\Omega_*(X^2)_{\IZ/2}$ is a free $\Omega_*$-module. 
Since the odd columns and the even rows are trivial, the vertical axis is a free $\Omega_*$-module
and the other components of the first quadrant are $\IZ/2$-torsion, then the spectral
sequence collapses on the second page. This implies
that $\Omega_+(E\IZ/2 \times_{\IZ/2} X^2)$ is isomorphic to the coinvariants 
$\Omega_*(X^2)_{\IZ/2}$, and therefore $\Omega_+(E\IZ/2 \times_{\IZ/2} X^2)$ is a free 
$\Omega_*$-module. Hence $N_+$ is a free $\Omega_*$-module, and therefore 
$\Omega_+^{D_{2p}}\{\calf_1, \calf_0\}$ is a free $\Omega_*$-module.
\end{proof}

\begin{lemma}
The boundary map $\delta:\Omega^{D_{2p}}_{+}\{\calf_3,\calf_1\} \to 
\Omega^{D_{2p}}_{-}\{\calf_1,\calf_0\}$ is surjective.
\end{lemma}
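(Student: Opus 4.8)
The plan is to prove surjectivity of $\delta$ by a reduction through the intermediate family $\calf_2$, followed by an explicit construction of preimages of a generating set of the (torsion) target.

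\textbf{Step 1 (kill the $\{\calf_2,\calf_1\}$-part).} Consider the long exact sequence of the triple $\calf_1\subset\calf_2\subset\calf_3$. Since $\Omega^{D_{2p}}_-\{\calf_2,\calf_1\}=0$, the map $b\colon\Omega^{D_{2p}}_+\{\calf_3,\calf_1\}\to\Omega^{D_{2p}}_+\{\calf_3,\calf_2\}$ is surjective with kernel the image of $\Omega^{D_{2p}}_+\{\calf_2,\calf_1\}$, and on that image $\delta$ agrees with the connecting homomorphism of the triple $\calf_0\subset\calf_1\subset\calf_2$, namely $[M,\partial M,f]\mapsto[\partial M,\emptyset,f|_{\partial M}]$ followed by the natural map $\Omega^{D_{2p}}_{*}\{\calf_1\}\to\Omega^{D_{2p}}_{*}\{\calf_1,\calf_0\}$. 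By Corollary \ref{cor bordism for adjacent families finite group} applied to the adjacent pair $(\calf_2,\calf_1)$, which differs by the self-normalizing subgroup $\langle b\rangle$, every class of $\Omega^{D_{2p}}_*\{\calf_2,\calf_1\}$ is represented by a disjoint union of disk bundles $D_{2p}\times_{\langle b\rangle}D(\varepsilon\otimes F)$ of sign-type $\langle b\rangle$-representations over closed manifolds; its boundary $D_{2p}\times_{\langle b\rangle}S(\varepsilon\otimes F)$ carries a \emph{free} $D_{2p}$-action, so its class comes from $\Omega^{D_{2p}}_*\{\calf_0\}$ and hence dies in $\Omega^{D_{2p}}_*\{\calf_1,\calf_0\}$ (two consecutive maps in the long exact sequence of the pair $(\calf_0,\calf_1)$). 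Therefore $\delta$ vanishes on $\ker b$, descends to $\overline\delta\colon\Omega^{D_{2p}}_+\{\calf_3,\calf_2\}\to\Omega^{D_{2p}}_-\{\calf_1,\calf_0\}$, and it suffices to show that $\overline\delta$ is surjective.

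\textbf{Step 2 (realize the generators).} By the previous lemma $\Omega^{D_{2p}}_-\{\calf_1,\calf_0\}$ is a $2$-torsion $\Omega_*$-module, and (by the construction of the inverse isomorphism in the proof of Theorem \ref{thm bordism for adjacent families}) it is generated over $\Omega_*$ by disk-bundle classes $[D(E),S(E),\psi]$ in which $E\to Y$ is a complex $D_{2p}$-bundle of even rank $k$ over a closed \emph{odd-dimensional} $D_{2p}$-manifold $Y$ where $\langle a\rangle$ acts trivially and $D_{2p}/\langle a\rangle=\IZ/2$ acts freely, with no trivial $\langle a\rangle$-isotypical summand in the fibers. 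Given such a generator I would build a preimage under $\overline\delta$ as follows. The base $Y$, being a free $\IZ/2$-manifold of odd dimension, is $\IZ/2$-equivariantly nullbordant (the $2$-torsion module $\widetilde\Omega^{\IZ/2}_*\{\{1\}\}$ maps to zero in the torsion-free module $\Omega^{\IZ/2}_*$), so there is a stably almost complex $D_{2p}$-manifold $\widehat Y$ with $\partial\widehat Y=Y$, with $\langle a\rangle$ acting trivially, and whose only non-free points are finitely many $D_{2p}$-fixed points. Because $k$ is even and the (locally constant, $\chi_\rho\leftrightarrow\chi_{\rho^{-1}}$-symmetric) $\langle a\rangle$-isotypical type of $E$ extends over $\widehat Y$, one can extend $E$ to a complex $D_{2p}$-bundle $\widehat E\to\widehat Y$ whose fibers over the $D_{2p}$-fixed points are sums of the two-dimensional irreducible representations $\sigma_j$ of $D_{2p}$ (after a harmless trivial-summand stabilization if needed). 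Then $D(\widehat E)$ is a $\calf_3$-free $D_{2p}$-manifold with $\partial D(\widehat E)=D(E)\cup_{S(E)}S(\widehat E)$ and isotropy of $S(\widehat E)$ contained in $\calf_2$; excising a $D_{2p}$-invariant tubular neighborhood of the $\langle b\rangle$-conjugacy strata of $S(\widehat E)$ (these lie in the fibers over the isolated $D_{2p}$-fixed points of $\widehat Y$, and their links are free because $\sigma_j$ restricts to the sign representation of $\langle b\rangle$ in the normal direction of its reflection axis) and applying Lemma \ref{lemma equivalent bordism classes} yields a $(\calf_3,\calf_1)$-free bordism element $[P,\partial P,g]$ with $\delta[P,\partial P,g]=[D(E),S(E),\psi]$; recognizing the latter uses that $\sigma_j$ restricts to $\chi_j\oplus\chi_{p-j}$ on $\langle a\rangle$, matching the symmetric partitions indexing $\overline\calp(k,\langle a\rangle)$.

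\textbf{Main obstacle.} Step 1 is formal once Corollary \ref{cor bordism for adjacent families finite group} is available. The real difficulty is Step 2: making the ``cone off the free action, extend the bundle, excise the $\langle b\rangle$-strata'' construction precise — producing $\widehat E$ with the prescribed fibers over the fixed stratum, keeping $D(\widehat E)$ strictly $\calf_3$-free while the excised boundary is strictly $\calf_1$-free (a manifold-with-corners bookkeeping), and tracking the bordism class carefully enough to identify $\delta$ of the outcome with the chosen generator.
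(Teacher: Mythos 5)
Your proposal takes a genuinely different route, and it is incomplete where the difficulty actually lies. Step~1 (factoring through $\calf_2$) is formally correct: every $(\calf_2,\calf_1)$-class is represented by disk bundles $D_{2p}\times_{\langle b\rangle}D(\nu)$ over $\langle b\rangle$-trivial bases with no trivial $\langle b\rangle$-summand in $\nu$, so their sphere-bundle boundaries are indeed $D_{2p}$-free and $\delta$ vanishes on $\ker b$. But this reduction is unnecessary and buys nothing here, since the paper never needs it.

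The real problem is Step~2, which you yourself flag as a plan rather than a proof, and one of its ingredients is wrong as stated. You assert that the free odd-dimensional stably almost complex $\IZ/2$-manifold $Y$ admits an equivariant nullbordism $\widehat Y$ ``whose only non-free points are finitely many $D_{2p}$-fixed points.'' In general it does not. The previous lemma together with the AHSS argument the paper gives shows $\Omega^{D_{2p}}_-\{\calf_1,\calf_0\}$ is generated over $\Omega_*$ by classes corresponding to $[S^{2i-1}_-\times_{\IZ/2}M^2]$ in $\Omega_-(E\IZ/2\times_{\IZ/2}X^2)$, so the relevant base is $Y=S^{2i-1}_-\times M^2$; its natural equivariant nullbordism is $\widehat Y=D(\IC^i_-)\times M^2$, whose $\IZ/2$-fixed set is $\{0\}\times\Delta(M)$, a submanifold of dimension $\dim M$, not isolated points. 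So ``isolated fixed points'' is neither achievable nor needed. Likewise, the extension of $E$ to a $D_{2p}$-bundle $\widehat E\to\widehat Y$ with prescribed fibers over the fixed stratum, and the excision/corner bookkeeping, are all asserted without justification.

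The paper's actual proof avoids all of this by being concrete. It first shows (via a Serre/Thom-map argument, building on the Atiyah--Hirzebruch computation of the previous lemma) that $\Omega_-(E\IZ/2\times_{\IZ/2}X^2)$ is generated as an $\Omega_*$-module by the bordism classes of $S^{2i-1}_-\times_{\IZ/2}M^2$, and then identifies the corresponding generators of $\Omega^{D_{2p}}_-\{\calf_1,\calf_0\}$ as the explicit $D_{2p}$-manifolds $S^{2i-1}_-\times D(E\times E)$ with the action $a\cdot(x,y,z)=(x,ay,a^{-1}z)$, $b\cdot(x,y,z)=(-x,z,y)$. It then writes down explicit $(\calf_3,\calf_1)$-free preimages $D(\IC^i_-)\times D(E\times E)$ whose boundary hits these generators. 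No abstract nullbordism, no bundle extension over it, no excision of $\langle b\rangle$-strata. If you wish to salvage your Step~2, you should reduce first to these explicit generators; at that point your ``cone off, extend, excise'' device collapses to the paper's one-line construction, which makes the abstract detour superfluous.
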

\begin{proof} Following the argument of the proof of the previous lemma it is enough to show 
that there are elements in $\Omega^{D_{2p}}_{+}\{\calf_3,\calf_1\}$
whose boundary correspond in $\Omega^{D_{2p}}_{-}\{\calf_1,\calf_0\}$ to the generators of  
$\Omega_-(E\IZ/2 \times_{\IZ/2} X^2)$ as $\Omega_*$-module
for $X= \prod_{l_1}^{l=\frac{p-1}{2}} BU(n_l)$.

The bordism group $\Omega_*(X)$ is generated as $\Omega_*$-module by unitary manifolds $M \to X$ 
(see \cite[Prop. 4.3.2 \& 4.3.3]{Kochman}) and therefore the trivial $\IZ [\IZ/2]$-submodule of 
$\Omega_*(X^2)$ is generated as a $\Omega_*$-module by the
manifolds $M^2 \to X^2$.  We claim that $\Omega_-(E\IZ/2 \times_{\IZ/2} X^2)$
is generated  as $\Omega_*$-module by the unitary manifolds 
\[
S^{2i-1}_-\times_{\IZ/2}M^2 \to E\IZ/2 \times_{\IZ/2} X^2.
\] 
This follows from the following argument.  Consider the maps 
$$S^{2i-1}_-\times_{\IZ/2}M^2  \to  E\IZ/2 \times_{\IZ/2} M^2 \to  E\IZ/2 \times_{\IZ/2} X^2$$
where the first one is induced by the inclusion $S^{2i-1}_-\to S^{\infty}_-=E \IZ/2$ and 
the second is induced by the map $M^2 \to X^2$. If the dimension of $M$ is $n$,
the composition of the maps  in homology
$$H_{2i+2n-1}(S^{2i-1}_-\times_{\IZ/2}M^2) \to H_{2i+2n-1}(E \IZ/2\times_{\IZ/2}M^2) 
\to H_{2i+2n-1}(E \IZ/2\times_{\IZ/2}X^2)$$
sends the volume form $[S^{2i-1}_-\times_{\IZ/2}M^2]$ to the $\IZ/2$-torsion class in  
the group 
$H_{2i+2n-1}(E \IZ/2\times_{\IZ/2}X^2)$ which corresponds in 
$E^2_{2i-1,2n} \cong H_{2i-1}(\IZ/2, H_{2n}(X^2))$ of the Serre spectral sequence
to the class in $H_{2i-1}(\IZ/2,  \IZ [M^2]) \cong \IZ/2.$
Therefore the homology classes in $H_{-}(E \IZ/2\times_{\IZ/2}X^2)$
defined by the volume forms of the unitary manifolds $S^{2i-1}_-\times_{\IZ/2}M^2$ 
generate the homology in odd degrees. 
This implies that the Thom homomorphism 
$\mu: \Omega_*(E \IZ/2\times_{\IZ/2}X^2) \to H_*(E \IZ/2\times_{\IZ/2}X^2)$ 
is surjective and that the bordism spectral sequence collapses. We conclude
that the unitary manifolds $S^{2i-1}_-\times_{\IZ/2}M^2 \to E\IZ/2 \times_{\IZ/2} X^2$ generate
$\Omega_-(E\IZ/2 \times_{\IZ/2} X^2)$ as $\Omega_*$-module.

Now consider the complex vector bundle $E \to M$ of rank $n_1+\dots+n_{(p-1)/2}$ 
that the map $M \to X = \prod_{l_1}^{l=\frac{p-1}{2}} BU(n_l)$ defines, with the 
appropriate induced action of $\IZ/p = \langle a \rangle$ on the fibers. Take the manifold 
$S^{2i-1}_-\times D(E \times E)$, where $D(E \times E)$ is the disk bundle of $E^2 \to M^2$ 
and define the $D_{2p}$ action on it as follows: for $(x,y,z) \in S^{2i-1}_-\times D(E \times E)$ 
let $a \cdot (x,y,z) := (x, ay, a^{-1}z)$ and $b\cdot (x,y,z):=(-x,z,y)$. The class of the 
$D_{2p}$-manifold $S^{2i-1}_-\times D(E \times E)$ lies in $\Omega^{D_{2p}}_{-}\{\calf_1,\calf_0\}$ 
and corresponds to the class of $S^{2i-1}_-\times_{\IZ/2}M^2 \to E\IZ/2 \times_{\IZ/2} X^2$ in 
$\Omega_*(E \IZ/2\times_{\IZ/2}X^2)$. The class of the $D_{2p}$-manifold 
$D(\IC^i_-)\times D(E \times E)$ lies in $\Omega^{D_{2p}}_{+}\{\calf_3,\calf_1\}$ and its boundary 
its the class of $S^{2i-1}_-\times D(E \times E)$ in $\Omega^{D_{2p}}_{-}\{\calf_1,\calf_0\}$. 
Hence the the boundary map $\Omega^{D_{2p}}_{+}\{\calf_3,\calf_1\} \to 
\Omega^{D_{2p}}_{-}\{\calf_1,\calf_0\}$ is surjective and the lemma follows.
\end{proof}

\begin{lemma}
As modules over $\Omega_{*}$, both $\Omega_{-}^{D_{2p}}\{\calf_{0}\}$ and 
$\Omega^{D_{2p}}_{-}\{\calf_1,\calf_0\}$ have projective dimension 1.
\end{lemma}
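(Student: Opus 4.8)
The plan is to treat the two modules separately, combining the Landweber criterion with direct-sum decompositions already available. For $\Omega_-^{D_{2p}}\{\calf_0\}$ I would use the identification $\Omega_*^{D_{2p}}\{\calf_0\}=\Omega_*^{D_{2p}}\{\{1\}\}\cong\Omega_*(BD_{2p})$ together with the fact recalled above (from \cite[Theorem 3]{Landweber-complex}) that $\operatorname{pd}_{\Omega_*}\Omega_*(BD_{2p})=1$. Since $\Omega_*$ is concentrated in even degrees, $\Omega_*(BD_{2p})$ splits as an $\Omega_*$-module into its even part $\Omega_+^{D_{2p}}\{\calf_0\}\cong\Omega_+\cong\Omega_*$, which is free, and its odd part $\Omega_-^{D_{2p}}\{\calf_0\}$. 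As the projective dimension of a direct sum is the supremum of the projective dimensions of the summands, and the even summand has projective dimension $0$, we conclude $\operatorname{pd}_{\Omega_*}\Omega_-^{D_{2p}}\{\calf_0\}=\operatorname{pd}_{\Omega_*}\Omega_*(BD_{2p})=1$.

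For $\Omega_-^{D_{2p}}\{\calf_1,\calf_0\}$ I would start from the decomposition $\Omega_*^{D_{2p}}\{\calf_1,\calf_0\}\cong M_*\oplus N_*$ obtained above via Corollary \ref{cor bordism for adjacent families finite group}, where $M_*$ is a free $\Omega_*$-module concentrated in even degrees and $N_*=\bigoplus_{\mathbf n}\Omega_*\!\left(E\IZ/2\times_{\IZ/2}X_{\mathbf n}^{2}\right)$ with $X_{\mathbf n}=\prod_{l}BU(n_l)$ and $\IZ/2$ acting on $X_{\mathbf n}^{2}$ by interchanging the two factors. Then $M_-=0$, so $\Omega_-^{D_{2p}}\{\calf_1,\calf_0\}=N_-$, and it suffices to bound $\operatorname{pd}_{\Omega_*}\Omega_*(E\IZ/2\times_{\IZ/2}X^{2})$ for $X$ a product of spaces $BU(k)$. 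For the upper bound I would show that $E\IZ/2\times_{\IZ/2}X^{2}$ has no odd integral cohomology and then apply \cite[Theorem 3]{Landweber-complex} exactly as for $BD_{2p}$: in the Serre spectral sequence of $X^{2}\to E\IZ/2\times_{\IZ/2}X^{2}\to B\IZ/2$ one has $E_2^{s,t}=H^s(\IZ/2;H^t(X^{2};\IZ))$, and the key point is that $H^*(X^{2};\IZ)\cong H^*(X;\IZ)^{\otimes 2}$ is free abelian with a monomial basis that the interchange permutes, hence a permutation $\IZ[\IZ/2]$-module whose odd group cohomology vanishes, while $H^*(X^{2};\IZ)$ itself is concentrated in even degrees; thus $E_2^{s,t}=0$ unless $s$ and $t$ are both even, so $H^{\mathrm{odd}}(E\IZ/2\times_{\IZ/2}X^{2};\IZ)=0$. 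Since the projective dimension of a (possibly infinite) direct sum is the supremum of those of the summands, this gives $\operatorname{pd}_{\Omega_*}N_*\le 1$, hence $\operatorname{pd}_{\Omega_*}\Omega_-^{D_{2p}}\{\calf_1,\calf_0\}\le 1$.

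For the matching lower bound, the summand of $N_*$ with all $n_l=0$ is $\Omega_*(B\IZ/2)$, whose odd part is nonzero and consists of torsion elements; as $\Omega_+^{D_{2p}}\{\calf_1,\calf_0\}$ is free and hence torsion-free (proved above), this torsion lies entirely in $\Omega_-^{D_{2p}}\{\calf_1,\calf_0\}$, so the latter is not torsion-free and therefore, since projective $\Omega_*$-modules are free \cite[Proposition 3.2]{Conner-Larry}, not projective. Thus its projective dimension is exactly $1$. The step I expect to require the most care is the cohomology computation for the Borel constructions $E\IZ/2\times_{\IZ/2}X^{2}$, namely verifying that the interchange makes $H^*(X^{2};\IZ)$ a genuine permutation $\IZ[\IZ/2]$-module so that the odd group cohomology, and hence the odd cohomology of the total space, vanishes. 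Alternatively one can bypass Landweber's theorem by reusing the collapsing Atiyah--Hirzebruch spectral sequence from the argument above: it exhibits the odd part of each $\Omega_*(E\IZ/2\times_{\IZ/2}X^{2})$ as a finite iterated extension of $\IZ/2$-torsion modules of the form $(\text{free }\Omega_*\text{-module})\otimes_{\IZ}\IZ/2$, each of projective dimension $1$, whence the extension has projective dimension at most $1$ as well.
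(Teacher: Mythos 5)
Your argument for $\Omega_-^{D_{2p}}\{\calf_0\}$ coincides with the paper's: both identify $\Omega_*^{D_{2p}}\{\calf_0\}$ with $\Omega_*(BD_{2p})$ and invoke the already-recorded fact that $H^{\mathrm{odd}}(BD_{2p};\IZ)=0$ together with \cite[Theorem 3]{Landweber-complex} to obtain projective dimension $1$.

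For $\Omega_-^{D_{2p}}\{\calf_1,\calf_0\}$ your route is correct but genuinely different from the paper's. The paper observes that the previous lemma has already established surjectivity of the Thom homomorphism $\mu\colon \Omega_*(E\IZ/2\times_{\IZ/2}X^2)\to H_*(E\IZ/2\times_{\IZ/2}X^2)$ and then applies \cite[Proposition 4]{Landweber-complex} (Thom map surjective $\Rightarrow$ projective dimension $\le 1$); this requires no new computation, just reuse of prior work. You instead verify directly that $H^{\mathrm{odd}}(E\IZ/2\times_{\IZ/2}X^2;\IZ)=0$, via the Serre spectral sequence and the observation that $H^*(X^2;\IZ)$ is a permutation $\IZ[\IZ/2]$-module (correct, since $H^*(X;\IZ)$ is free abelian and evenly graded, so no Koszul signs appear), and then apply \cite[Theorem 3]{Landweber-complex} summand by summand. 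Both arguments use the same decomposition $M_*\oplus N_*$ and both are sound; yours is more self-contained (independent of the surjectivity lemma) at the cost of an extra cohomology computation, while the paper's is shorter. Your lower bound is also more explicit than the paper's: you exhibit the torsion concretely in the $B\IZ/2$ summand (the array with all $n_l=0$), whereas the paper merely asserts the module contains torsion. One small caution on your proposed alternative via the collapsing spectral sequence: the filtration of the odd part by $\Omega_*$-submodules $F_1\subset F_3\subset F_5\subset\cdots$ is infinite, and an increasing union of submodules of projective dimension $\le 1$ need not have projective dimension $\le 1$ in general, so that sketch would need an additional argument (or a passage to the graded pieces in a fixed total degree, where the filtration is finite); the main Landweber route is the cleaner one.
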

\begin{proof}
Notice that $\Omega^{D_{2p}}_{*}\{\calf_{0}\}=\Omega_{*}(BD_{2p})$  and thus it has projective dimension $1$ over $\Omega_{*}$.  On the other hand, 
using the previous lemma we conclude that the Thom map corresponding to 
$\Omega^{D_{2p}}_{-}\{\calf_1,\calf_0\}$ is surjective and thus  
\cite[Proposition 4]{Landweber-complex} implies that $\Omega^{D_{2p}}_{-}\{\calf_1,\calf_0\}$ 
has projective dimension at most 1 as a module over $\Omega_{*}$. The projective 
dimension of this module is 1 because it also contains torsion elements.
\end{proof}

\begin{lemma}
The boundary map $\partial:\Omega^{D_{2p}}_{+}\{\calf_3,\calf_0\} \to \Omega^{D_{2p}}_{-}\{\calf_0\}$ 
is surjective.
\end{lemma}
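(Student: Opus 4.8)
The plan is to combine the geometric description of the connecting homomorphism with Kamata's list of generators for the free bordism of $D_{2p}$. In the long exact sequence of the pair $(\calf_3,\calf_0)$ the boundary map $\partial$ sends the class $[M,\partial M]$ of a compact stably complex $D_{2p}$-manifold $M$ with free boundary to the class $[\partial M]\in\Omega^{D_{2p}}_{*}\{\calf_0\}$, and it is a homomorphism of $\Omega_*$-modules. Since $\Omega_{*}$ is concentrated in even degrees, multiplying an even-dimensional preimage by a bordism class of the point keeps it in $\Omega^{D_{2p}}_{+}\{\calf_3,\calf_0\}$, so it suffices to realize every element of an $\Omega_*$-generating set of $\Omega^{D_{2p}}_{-}\{\calf_0\}$ as the boundary of an \emph{even-dimensional} compact stably complex $D_{2p}$-manifold with free boundary.

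By \cite[Cor. 2.5]{Kamata2} (compare the isomorphism \eqref{decomposition free D2p actions}) the group $\widetilde{\Omega}_{*}^{D_{2p}}\{\{1\}\}$ is generated as an $\Omega_*$-module by the classes $[D_{2p}\times_{\IZ/2}S^{2i-1}_{-}]$ for $i\ge1$ and $[D_{2p}\times_{\IZ/p}S^{4k-1}_{\lambda^{l}}]$ for $k\ge1$ and $1\le l\le p-1$. All of these manifolds are odd-dimensional, so, again because $\Omega_*$ lives in even degrees, they already generate $\Omega^{D_{2p}}_{-}\{\calf_0\}=\widetilde{\Omega}_{-}(BD_{2p})$ as an $\Omega_*$-module.

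Next I would write down the obvious nullbordisms. The antipodal action on $S^{2i-1}_{-}$ is the restriction of the complex-linear $\IZ/2$-action $v\mapsto -v$ on the unit disk $D(\IC^i_{-})\subset\IC^i$, whose standard complex structure on $T D(\IC^i_{-})\cong\underline{\IC^i}$ restricts on $\partial D(\IC^i_{-})=S^{2i-1}$ to the standard equivariant stably complex structure of $S^{2i-1}_{-}$; hence, with $\IZ/2=\langle b\rangle$, the balanced product $D_{2p}\times_{\IZ/2}D(\IC^i_{-})$ is a compact stably complex $D_{2p}$-manifold of even dimension $2i$ whose boundary $D_{2p}\times_{\IZ/2}S^{2i-1}_{-}$ is free (the isotropy along the zero section being conjugate to $\langle b\rangle$, so in particular all isotropy lies in $\calf_3$). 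Likewise multiplication by $\lambda^{l}=e^{2\pi il/p}$ preserves the unit disk $D(\IC^{2k}_{\lambda^{l}})\subset\IC^{2k}$ together with its complex structure, so with $\IZ/p=\langle a\rangle$ the manifold $D_{2p}\times_{\IZ/p}D(\IC^{2k}_{\lambda^{l}})$ is a compact stably complex $D_{2p}$-manifold of even dimension $4k$ with free boundary $D_{2p}\times_{\IZ/p}S^{4k-1}_{\lambda^{l}}$ and all isotropy groups in $\calf_1\subseteq\calf_3$.

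Finally, the classes $[D_{2p}\times_{\IZ/2}D(\IC^i_{-}),\,D_{2p}\times_{\IZ/2}S^{2i-1}_{-}]$ and $[D_{2p}\times_{\IZ/p}D(\IC^{2k}_{\lambda^{l}}),\,D_{2p}\times_{\IZ/p}S^{4k-1}_{\lambda^{l}}]$ lie in $\Omega^{D_{2p}}_{+}\{\calf_3,\calf_0\}$ and are carried by $\partial$ precisely onto the generators listed above; by $\Omega_*$-linearity $\partial$ is therefore surjective. I do not expect a genuine obstacle here: the only point that requires a little care is checking that the complex structure on each disk restricts to the prescribed equivariant stably complex structure on its bounding sphere, which is routine, and the only substantive input is the generating set of \cite[Cor. 2.5]{Kamata2} already quoted in the text.
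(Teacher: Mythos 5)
Your argument is the same as the paper's: take Kamata's generating set $[D_{2p}\times_{\IZ/2}S^{2i-1}_-]$, $[D_{2p}\times_{\IZ/p}S^{4k-1}_{\lambda^l}]$ for $\Omega^{D_{2p}}_-\{\calf_0\}$ and exhibit the even-dimensional, $(\calf_3,\calf_0)$-free disk bundles $D_{2p}\times_{\IZ/2}D(\IC^i_-)$, $D_{2p}\times_{\IZ/p}D(\IC^{2k}_{\lambda^l})$ as preimages under $\partial$, then conclude by $\Omega_*$-linearity. The only difference is that you spell out the check that the complex structures on the disks restrict correctly to the bounding spheres, which the paper takes for granted.
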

\begin{proof}
By the isomorphism described in formula \eqref{decomposition free D2p actions} we know that the
bordism classes $[D_{2p} \times _{\IZ/2} S^{2i-1}_-]$ and 
$[D_{2p} \times _{\IZ/p} S^{4k-1}_{\lambda^l}]$ generate $\Omega^{D_{2p}}_{-}\{\calf_0\}$ as a 
$\Omega_*$- module. Let $D(\IC^i_-)$ and $D(\IC^{2k}_{\lambda^l})$ denote the disks of the 
representations of $\IZ/2$ and $\IZ/p$ respectively whose boundary are $S^{2i-1}_-$ and 
$S^{4k-1}_{\lambda^l}$. The manifolds $D_{2p} \times _{\IZ/2} D(\IC^{i}_-)$ and 
$D_{2p} \times _{\IZ/p} D(\IC^{2k}_{\lambda^l})$ are both $(\calf_3, \calf_0)$-free, and 
their boundaries are $D_{2p} \times _{\IZ/2} S^{2i-1}_-$ 
and $D_{2p} \times _{\IZ/p} S^{4k-1}_{\lambda^l}$ respectively. Therefore the boundary map
$\partial:\Omega^{D_{2p}}_{+}\{\calf_3,\calf_0\} \to \Omega^{D_{2p}}_{-}\{\calf_0\}$ is surjective 
and the lemma follows.
\end{proof}

\bibliographystyle{abbrv} 
  \bibliography{Equivariant-K-theory-bibliography}
\end{document}